\DeclareMathOperator{\val}{val}
\DeclareMathOperator{\rep}{rep}
\theoremstyle{plain}
\newtheorem{theorem}{Theorem}
\newtheorem{lemma}[theorem]{Lemma}
\newtheorem{corollary}[theorem]{Corollary}
\newtheorem{prop}[theorem]{Proposition}
\newtheorem{conj}[theorem]{Conjecture}
\theoremstyle{definition}
\newtheorem{definition}[theorem]{Definition}
\newtheorem{example}[theorem]{Example}
\newtheorem{remark}[theorem]{Remark}
\begin{document}
\title[Counting non-zero coefficients in Pascal triangles]{Counting the number of non-zero coefficients in rows of generalized Pascal triangles}
\author{Julien LEROY}
\thanks{J. Leroy is FNRS post-doctoral fellow.}
\author{Michel RIGO}
\author{Manon STIPULANTI}
\thanks{M. Stipulanti is supported by a FRIA grant.}
\address{Universit\'e de Li\`ege, Institut de math\'ematique, All\'ee de la d\'ecouverte 12 (B37),
4000 Li\`ege, Belgium\newline
J.Leroy@ulg.ac.be, M.Rigo@ulg.ac.be, M.Stipulanti@ulg.ac.be}
\subjclass[2010]{11B65 (primary), and 68R15, 68Q70, 11B57, 11B85 (secondary)} 
\keywords{Binomial coefficients; Subword; Farey tree; Regular sequences; Fibonacci numeration system.}

\begin{abstract}
    This paper is about counting the number of distinct (scattered) subwords occurring in a given word. More precisely, we consider the generalization of the Pascal triangle to binomial coefficients of words and the sequence $(S(n))_{n\ge 0}$ counting the number of positive entries on each row. By introducing a convenient tree structure, we provide a recurrence relation for $(S(n))_{n\ge 0}$. This leads to a connection with the $2$-regular Stern--Brocot sequence and the sequence of denominators occurring in the Farey tree. Then we extend our construction to the Zeckendorf numeration system based on the Fibonacci sequence. Again our tree structure permits us to obtain recurrence relations for and the $F$-regularity of the corresponding sequence.
\end{abstract}

\maketitle

\section{Introduction}

 A {\em finite word} is simply a finite sequence of letters belonging to a finite set called the {\em alphabet}. We recall from combinatorics on words that the binomial coefficient $\binom{u}{v}$ of two finite words $u$ and $v$ is
the number of times $v$ occurs as a subsequence of $u$ (meaning as a
``scattered'' subword). As an example, consider the following binomial coefficient for two words over $\{0,1\}$ 
$$\binom{101001}{101}=6.$$
Indeed, if we index the letters of the first word $u_1u_2\cdots u_6=101001$, we have 
$$u_1u_2u_3=u_1u_2u_6=u_1u_4u_6=u_1u_5u_6=u_3u_4u_6=u_3u_5u_6=101.$$
Observe that this concept is a natural generalization of the binomial coefficients of integers. For a one-letter alphabet $\{a\}$, we have
\begin{equation}
    \label{eq:1lettre}
\binom{a^m}{a^n}=\binom{m}{n},\quad \forall\, m,n\in\mathbb{N}
\end{equation}
where $a^m$ denotes the concatenation of $m$ copies of the letter $a$. For more on these binomial coefficients, see, for instance, \cite[Chap.~6]{Lot}. There is a vast literature on the subject with applications in formal language theory (e.g., Parikh matrices, $p$-group languages or piecewise testable languages \cite{KNS1,KNS}) and combinatorics on words (e.g., avoiding binomial repetitions \cite{Rao}). One of the first combinatorial questions was to determine when it is possible to uniquely reconstruct a word from some of its binomial coefficients; for instance, see \cite{erdos}.

The connection between the Pascal triangle and the Sierpi\'nski gasket is well understood. 
Considering the intersection of the lattice $\mathbb{N}^2$ with the region $[0,2^j]\times [0,2^j]$, the first $2^j$ rows and columns of the usual Pascal triangle $(\binom{m}{n}\bmod{2})_{m,n< 2^j}$ provide a coloring of this lattice. If we normalize this region by a homothety of ratio $1/2^j$, we get a sequence in $[0,1]\times [0,1]$ converging, for the Hausdorff distance, to the Sierpi\'nski gasket when $j$ tends to infinity.
In~\cite{LRS} we extend this result for a generalized Pascal triangle defined below.

\begin{definition}\label{def:L2}
We let $\rep_2(n)$ denote the (greedy) base-$2$ expansion of $n\in\mathbb{N}\setminus\{0\}$ starting with~$1$ (these expansions correspond to expansions with most significant digit first). We set $\rep_2(0)$ to be the empty word denoted by $\varepsilon$. Let $L_2=\{\varepsilon\}\cup 1\{0,1\}^*$ be the set of base-$2$ expansions of the integers. For all $i\ge 0$, we denote by $w_i$ the $i$th word in $L_2$. Observe that $w_i = \rep_2(i)$ for $i\ge 0$.
\end{definition}
To define an array $\mathsf{P}_2$, we will consider all the words over $\{0,1\}$ (starting with~$1$) and we order them by the radix order (i.e., first by length, then by the classical lexicographic ordering for words of the same length assuming $0<1$). For all $i,j\ge 0$, the element in the $i$th row and $j$th column of the array $\mathsf{P}_2$ is defined as the binomial coefficient $\binom{w_i}{w_j}$ of the words $w_i,w_j$ from Definition~\ref{def:L2}. This array is a generalized Pascal triangle that was introduced in \cite{LRS} and its first few values are given in Table~\ref{tab:tp}. 
\begin{table}[h!t]
$$\begin{array}{r|r|cccccccc|c}
&&\varepsilon&1&10&11&100&101&110&111&S\\
\hline
\rep_2(0)& \varepsilon&\mathbf{1} & 0 & 0 & 0 & 0 & 0 & 0 & 0 & 1\\
\rep_2(1)& 1&\mathbf{1} & \mathbf{1} & 0 & 0 & 0 & 0 & 0 & 0 &2\\
\rep_2(2)& 10&1 & 1 & 1 & 0 & 0 & 0 & 0 & 0 &3\\
\rep_2(3)& 11&\mathbf{1} & \mathbf{2} & 0 & \mathbf{1} & 0 & 0 & 0 & 0 &3\\
\rep_2(4)& 100&1 & 1 & 2 & 0 & 1 & 0 & 0 & 0 &4\\
\rep_2(5)& 101&1 & 2 & 1 & 1 & 0 & 1 & 0 & 0 &5\\
\rep_2(6)& 110&1 & 2 & 2 & 1 & 0 & 0 & 1 & 0 &5\\
\rep_2(7)& 111&\mathbf{1} & \mathbf{3} & 0 & \mathbf{3} & 0 & 0 & 0 & \mathbf{1} &4\\
\end{array}$$
\caption{The  first few values in the generalized Pascal triangle $\mathsf{P}_2$.}
    \label{tab:tp}
\end{table}
These values correspond to the words $\varepsilon,1,10,11,100,101,110,111$.
In Table~\ref{tab:tp}, the elements of the classical Pascal triangle are written in bold. A visual representation is given in Figure \ref{fig:comp} where we have represented the positive values and a compressed version of the same figure. In Table~\ref{tab:tp}, we also add an extra column in which we count, on each row of $\mathsf{P}_2$, the number of positive binomial coefficients as explained in Definition~\ref{def:S}.    
 \begin{figure}[h!t]
      \centering
      \scalebox{.5}{\includegraphics{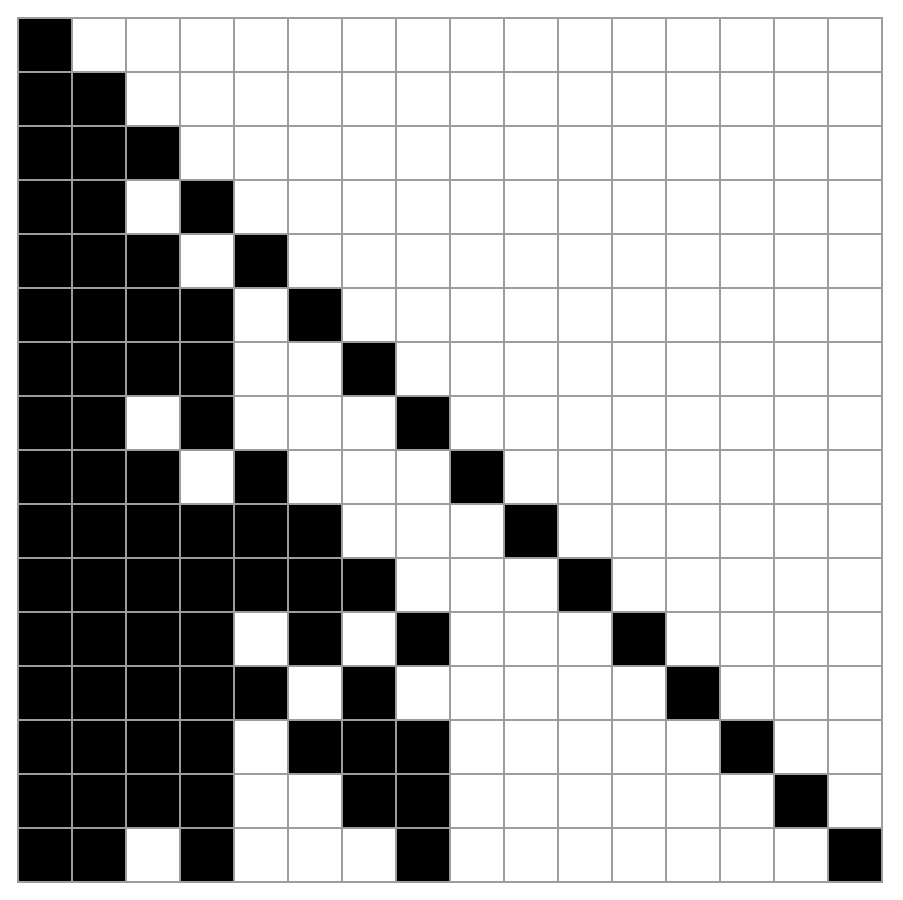}}\quad\quad \scalebox{.5}{\includegraphics{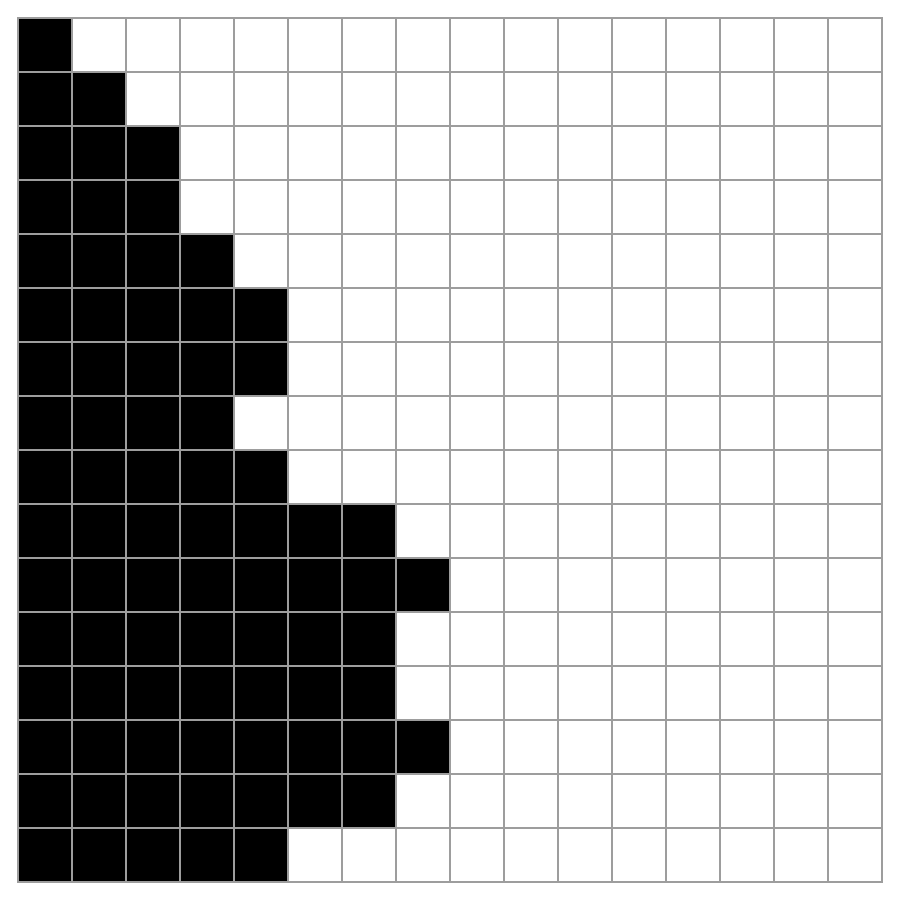}}
      \caption{Positive values in the generalized Pascal triangle $\mathsf{P}_2$ (on the left) and the compressed version (on the right).}
      \label{fig:comp}
  \end{figure}

\begin{definition}\label{def:S}
We are interested in the sequence\footnote{The sequence obtained by adding an extra $1$ as a prefix of our sequence of interest matches exactly the sequence {\tt A007306} found in Sloane's On-Line Encyclopedia of Integer Sequences \cite{EOIS}.} $(S(n))_{n\ge 0}$ whose $n$th term, $n\ge 0$, is the number of non-zero elements in the $n$th row of $\mathsf{P}_2$. Hence, the first few terms of this sequence are
$$1, 2, 3, 3, 4, 5, 5, 4, 5, 7, 8, 7, 7, 8, 7, 5, 6, 9, 11, 10, 11, 13, 12, 9, 9, 12, 13, 11, 10, \ldots.$$
Otherwise stated, for $n\ge 0$, we define
\begin{equation}
    \label{eq:defS}
    S(n):=\#\left\{v\in L_2\mid \binom{\rep_2(n)}{v}>0\right\}.
\end{equation}
\end{definition}

Observe that on a one-letter alphabet, i.e., for the classical Pascal triangle, the analogue sequence satisfies $S'(n)=n+1$ for all $n\ge 0$. 
As we can see in Figure~\ref{fig:Sn}, the sequence $(S(n))_{n \geq 0}$ has a much more chaotic behavior.
\begin{figure}[h!tbp]
    \centering
    \includegraphics[scale=1]{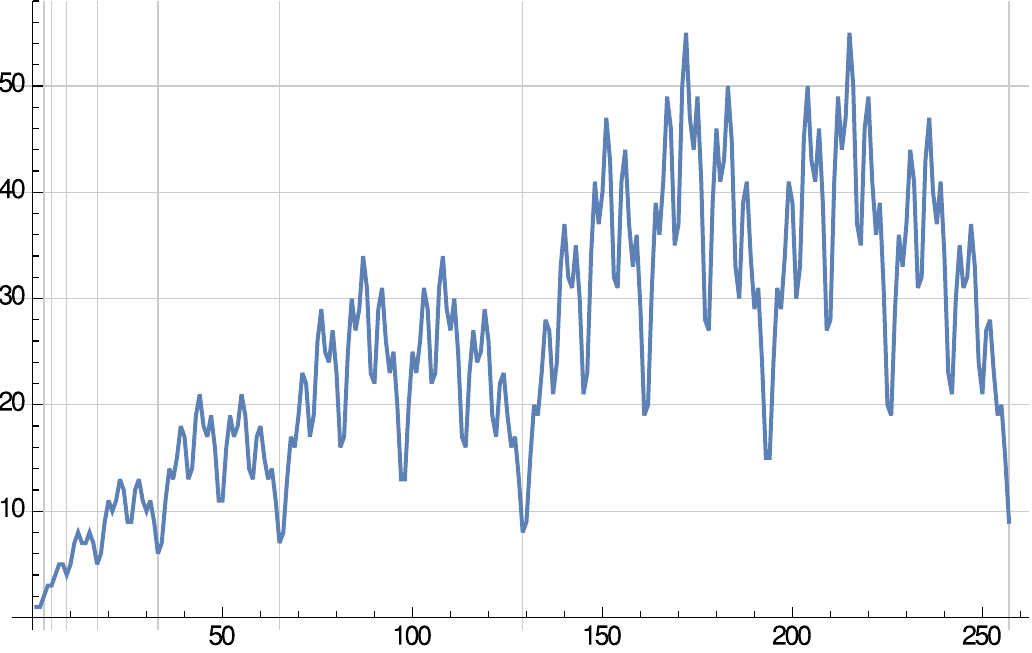}
    \caption{The sequence $(S(n))_{n\ge 0}$ in the interval $[0,256]$.}
    \label{fig:Sn}
\end{figure}

However Figure~\ref{fig:Sn} shares some similarities with the sequence studied in \cite{Elise} and independently in \cite{greinecker} about the $2$-abelian complexity of the Thue-Morse word and relative infinite words. For instance, we may observe a palindromic structure over each interval of the form $[2^n,2^{n+1})$. This suggests that $(S(n))_{n\ge 0}$, as the Stern--Brocot sequence, should be $2$-regular (see Section~\ref{sec:2reg}). 

\subsection{Our contribution} In Section~\ref{sec:3}, we introduce a convenient tree structure, called the trie of subwords, that permits us to easily count the number of subwords occurring in a given word. 
In Section~\ref{sec:proof}, based on this notion, we obtain a recurrence relation satisfied by $(S(n))_{n\ge 0}$ and stated in Proposition~\ref{pro:rec}. 
This result induces a relationship with the sequence {\tt A007306} and with the Stern--Brocot sequence $(SB(n))_{n\ge 0}$ {\tt A002487}. In Section~\ref{sec:2}, we show that $S(n)=SB(2n+1)$ for all $n\ge 0$. 
From the $2$-regularity of the Stern--Brocot sequence \cite[Example 7]{AS99}, one can immediately deduce that $(S(n))_{n\ge 0}$ is also 2-regular. 
In Section~\ref{sec:2reg}, using Proposition~\ref{pro:rec}, we propose an alternative proof of that property. 
As a corollary, $S(n)$ can be computed by multiplying $2\times 2$ matrices and the number of multiplications is proportional to $\log_2(n)$. Amongst the unbounded $2$-regular sequences, the $2$-synchronized sequences are those that can be ``computed'' with a finite automaton. 
In Section~\ref{sec:sync}, we shortly discuss the fact that $(S(n))_{n\ge 0}$ is not $2$-synchronized. 
In Section~\ref{sec:4}, we suggest that our techniques could be extended to base-$k$ numeration systems for $k \geq 3$. This gives a motivation to study further such extensions of sequences related to the Stern--Brocot sequence.

In the last part of the paper, we consider a new variation of the Pascal triangle by reducing the entries to the admissible representations of integers in the Zeckendorf numeration system based on the Fibonacci sequence. Reconsidering our initial problem with this new array leads to a new sequence denoted by $(S_F(n))_{n\ge 0}$. Surprisingly, this sequence still behaves quite well. We get a recurrence relation similar to Proposition~\ref{pro:rec}. Adapting the tries of subwords, we prove in Section~\ref{sec:freg} that $(S_F(n))_{n\ge 0}$ is $F$-regular (the definition will be recalled) in the sense of \cite{Sha88,AST} meaning that the $\mathbb{Z}$-module generated by the kernel adapted to the Zeckendorf numeration system is finitely generated. As a corollary, $S_F(n)$  can be computed by multiplying $2\times 2$ matrices and the number of multiplications is proportional to $\log_\varphi(n)$ where $\varphi$ is the golden ratio. 
Other sequences exhibiting this $F$-regularity can be found in \cite{berstel,DMSS}.

\section{The trie of subwords}\label{sec:3}

In \cite{Kar}, an automaton with multiplicities accepting exactly the subwords $v$ of a given word $u$ is presented. The number of accepting paths is exactly $\binom{u}{v}$. As mentioned in \cite{KNS}, it is an important problem to determine what the ``best'' data structures for reasoning with subwords are. Also see \cite{BDS}.

For our particular needs, we first introduce a convenient tree structure, where the nodes correspond to the subwords of a given word. This tree not only leads to a recurrence relation to compute $S(n)$ directly from $\rep_2(n)$ but can also be generalized to larger alphabets and to the Fibonacci case considered in the last part of the paper.

\begin{definition}\label{def:trie}
    Let $w$ be a finite word over $\{0,1\}$. The language of its subwords is factorial, i.e., if $xyz$ is a subword of $w$, then $y$ is also a subword of $w$. Thus we may associate with $w$ the {\em trie of its subwords} denoted by $\mathcal{T}(w)$.  
The root is $\varepsilon$, and if $u$ and $ua$ are two subwords of $w$ with $a\in\{0,1\}$, then $ua$ is a child of $u$.
This trie is also called prefix tree or radix tree. All successors of a node have a common prefix. 

If we are interested in words and subwords belonging to a specific factorial language $K$, we only consider the part of $\mathcal{T}(w)$ that is a subtree of the tree associated with $K$. This subtree is denoted by $\mathcal{T}_K(w)$.
\end{definition}

In the first part of this paper, we are interested in words and subwords belonging to $L_2$. We will thus consider the tree $\mathcal{T}_{L_2}(w)$. This means that in $\mathcal{T}(w)$ we will only consider the child $1$ of the root $\varepsilon$.

\begin{remark} 
Note that the number of nodes on level $\ell\ge 0$ in $\mathcal{T}_{L_2}(w)$ counts the number of subwords of length $\ell$ in $L_2$ occurring in $w$.
In particular, the number of nodes of the trie $\mathcal{T}_{L_2}(\rep_2(n))$ is exactly $S(n)$ for all $n\ge 0$.
\end{remark}

\begin{example}\label{exa:tr1}
In Figure~\ref{fig:trie1}, we have depicted the tree $\mathcal{T}_{L_2}(11001110)$ (the dashed lines and the subtrees $T_\ell$, $\ell\in\{0,\ldots,3\}$, will become clear with Definition~\ref{def:subtree} below).
The word $w$ is highlighted as the leftmost branch (it is more visual, if we do not use the convention that the left child of $u$ is $u0$ and the right child of $u$ is $u1$). The edge between $u$ and its child $u0$ (resp., $u1$) is represented in gray (resp., black).
\begin{figure}[h!tb]
    \centering
    {\psfrag{T0}{$T_0$}\psfrag{T1}{$T_1$}\psfrag{T2}{$T_2$}\psfrag{T3}{$T_3$}
\psfrag{0}{$0$}\psfrag{1}{$1$}\includegraphics{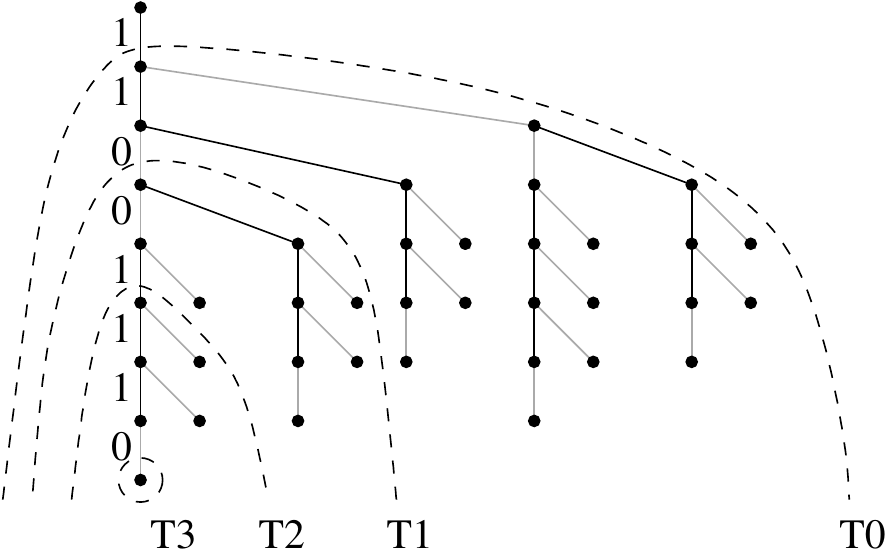}}
    \caption{The trie $\mathcal{T}_{L_2}(11001110)$.}
    \label{fig:trie1}
\end{figure}
\end{example}

Since we are dealing with subwords of $w$, the tree $\mathcal{T}_{L_2}(w)$ has a particular structure that will be helpful to count the number of distinct subwords occurring in $w$.
We now describe this structure which permits us to construct $\mathcal{T}_{L_2}(w)$ starting with a linear tree and proceeding bottom-up (see Example~\ref{exa:bu}).

\begin{definition}\label{def:subtree}
Each non-empty word $w$ in $L_2$ is factorized into consecutive maximal blocks of ones and blocks of zeroes. It is of the form
\begin{equation}
    \label{eq:factoriz}
     w=\underbrace{1^{n_1}}_{u_1} \underbrace{0^{n_2}}_{u_2} \underbrace{1^{n_3}}_{u_3} \underbrace{0^{n_4}}_{u_4} \cdots \underbrace{1^{n_{2j-1}}}_{u_{2j-1}} \underbrace{0^{n_{2j}}}_{u_{2j}}
\end{equation}
with $j\ge 1$, $n_1,\ldots, n_{2j-1}\ge 1$ and $n_{2j}\ge 0$.

Let $M=M_w$ be such that $w=u_1u_2\cdots u_M$ where $u_M$ is the last non-empty block of zeroes or ones.
For every $\ell\in\{0,\ldots,M-1\}$, the subtree of $\mathcal{T}_{L_2}(w)$ whose root is the node $u_1\cdots u_\ell 1$ (resp., $u_1\cdots u_\ell 0$) if $\ell$ is even (resp., odd) is denoted by $T_\ell$ (if $\ell=0$, we set $u_1\cdots u_\ell = \varepsilon$). For convenience (Corollary~\ref{cor:arbreTl}), we set $T_M$ to be an empty tree with no node. Roughly speaking, we have a root of a new subtree $T_\ell$ for each new alternation of digits in $w$.
\end{definition}
For the word $11001110$ of Example~\ref{exa:tr1}, we have $M=4$. In Figure~\ref{fig:trie1}, we have represented the trees $T_0,\ldots,T_3$ where $T_3$ is limited to a single node. Indeed, the number of nodes of $T_{M-1}$ is $n_M$, which is equal to $1$ in this example.

Since we are considering subwords of $w$, we immediately get the following.

\begin{prop}\label{pro:forme_arbre}
Let $w$ be a finite word in $L_2$. With the above notation about $M$ and the subtrees $T_\ell$,
the tree $\mathcal{T}_{L_2}(w)$ has the following properties.
\begin{itemize}
\item
Assume that $2\le 2k<M$.
For every $j\in\{0,\ldots,n_{2k}-1\}$, the node of label $x=u_1 \cdots u_{2k-1} 0^j$ has two children $x0$ and $x1$.
The node $x1$ is the root of a tree isomorphic to $T_{2k}$.

\item
Assume that $3\le 2k+1<M$.
Similarly, for every $j\in\{0,\ldots,n_{2k+1}-1\}$, the node of label $x=u_1 \cdots u_{2k} 1^j$ has two children $x0$ and $x1$.
The node $x0$ is the root of a tree isomorphic to $T_{2k+1}$. 
\item
For every $j\in\{1,\ldots,n_{1}-1\}$, the node of label $x=1^j$ has two children $x0$ and $x1$. The node $x0$ is the root of a tree isomorphic to $T_{1}$.
\end{itemize}
\end{prop}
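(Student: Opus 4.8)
The plan is to unpack the definition of the trie $\mathcal{T}_{L_2}(w)$ and read off, at each node, which one-letter extensions remain subwords of $w$. The key observation is purely combinatorial: if $x$ is a subword of $w=u_1u_2\cdots u_M$ and $x$ has been ``consumed'' up through block $u_i$ in the greedy leftmost embedding, then $xa$ is a subword of $w$ if and only if the letter $a$ occurs somewhere in $u_{i}u_{i+1}\cdots u_M$ after the position used for the last letter of $x$. So first I would make precise the notion of the \emph{greedy embedding position} of a node: for a node $x$ of $\mathcal{T}_{L_2}(w)$, let $p(x)$ be the least index such that $x$ embeds into the prefix $u_1\cdots u_{p(x)}$ of $w$; equivalently $p(x)$ is the block where the greedy (leftmost) embedding of $x$ ends. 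For the nodes named explicitly in the statement, this greedy position is immediate from their labels.

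Next I would verify the three bullets one at a time, all by the same mechanism. Take the first bullet: $x=u_1\cdots u_{2k-1}0^j$ with $0\le j\le n_{2k}-1$ and $2\le 2k<M$. The greedy embedding of $x$ ends inside block $u_{2k}=0^{n_{2k}}$ (at its $j$th zero), and since $j\le n_{2k}-1$ there is at least one more zero of $u_{2k}$ available, so $x0$ is a subword; also $2k<M$ means a block $u_{2k+1}=1^{n_{2k+1}}$ (with $n_{2k+1}\ge1$) still lies ahead, so $x1$ is a subword too — hence $x$ has both children $x0$ and $x1$. To identify the subtree rooted at $x1$: the greedy embedding of $x1$ ends at the first one of block $u_{2k+1}$, exactly the same block position as the greedy embedding of the node $u_1\cdots u_{2k}1$, which is by Definition~\ref{def:subtree} the root of $T_{2k}$. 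Since the set of subwords of $w$ extending a given node $y$ depends only on the suffix $u_{p(y)+1}\cdots u_M$ of $w$ together with the residual room left in block $u_{p(y)}$, and here both quantities agree for $x1$ and for the root of $T_{2k}$ (the root of $T_{2k}$ also sits at the first one of $u_{2k+1}$, with all of $1^{n_{2k+1}-1}0^{n_{2k+2}}\cdots$ still ahead), the subtree below $x1$ is isomorphic to $T_{2k}$. The second and third bullets are proved identically, swapping the roles of zero-blocks and one-blocks (the third is just the special case $k=0$ of the ``odd'' pattern, where the relevant node is $1^j$, $1\le j\le n_1-1$, whose greedy embedding ends at the $j$th one of $u_1$).

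The main thing to get right — and the only real subtlety — is the claim that ``the subtree below a node is determined by its greedy embedding position (plus residual room in the current block).'' I would state and prove this as a small lemma: for nodes $y,y'$ of $\mathcal{T}_{L_2}(w)$, if $p(y)=p(y')$, if $y$ and $y'$ end their greedy embeddings at the same position within block $u_{p(y)}$, and if $y,y'$ end with the same letter (so that the notion of ``which extensions are allowed'' is computed the same way), then the subtrees of $\mathcal{T}_{L_2}(w)$ hanging below $y$ and below $y'$ are isomorphic as labelled trees after relabelling by the appropriate prefix. The proof is an induction on depth: a child $ya$ exists iff letter $a$ is available in the remaining portion of $w$ after the greedy end of $y$, and that remaining portion is the same for $y$ and $y'$; moreover the greedy end of $ya$ (and of $y'a$) is computed the same way, so the hypothesis propagates to the children. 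This lemma instantly yields the three isomorphism claims, since in each bullet the root of the relevant $T_\ell$ is checked to have the same greedy block position, same in-block offset, and same final letter as the node $x1$ (resp.\ $x0$) under consideration. Everything else is bookkeeping about the block factorization~\eqref{eq:factoriz}, and the inequalities $2\le 2k<M$, $3\le 2k+1<M$, $1\le j\le n_1-1$ are exactly what guarantee that the needed blocks (a next block of the opposite letter, or a leftover letter in the current block) actually exist.
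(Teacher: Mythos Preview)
Your proof is correct and is essentially a careful unpacking of what the paper leaves implicit: the paper offers no formal proof of this proposition, stating only ``Since we are dealing with subwords of $w$, we immediately get the following.'' Your greedy-embedding formalization and the small subtree-isomorphism lemma make precise exactly the observation the paper treats as self-evident.
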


As depicted in Figure~\ref{fig:trie2}, this proposition permits us to reconstruct the tree $\mathcal{T}_{L_2}(w)$ from $w$.

\begin{example}\label{exa:bu}
In Figure~\ref{fig:trie2}, we show how to build $\mathcal{T}_{L_2}(11001110)$ in four steps. In this example, we have 
\[
     11001110 = \underbrace{1^{2}}_{u_1} \underbrace{0^{2}}_{u_2} \underbrace{1^{3}}_{u_3} \underbrace{0^{1}}_{u_4}.
\]
Hence the tree $T_3$ is limited to a single node because $n_4=1$. Actually, $T_{M-1}$ is a linear tree with $n_M$ nodes. 

To reconstruct $\mathcal{T}_{L_2}(11001110)$, start with a linear tree corresponding to the word (it is depicted on the left in Figure~\ref{fig:trie2}). 
We proceed from the bottom of the tree. 
First, add a copy of $T_3$ to each node of the form $u_1 u_2 1^j$ for $j=0,1,2$ (second picture). 

Then consider the subtree $T_2$ whose root is the node $u_1\cdots u_2 1$ and add a copy of it to each node of the form $u_1 0^j$ for $j=0,1$ (third picture). 

Finally, consider the subtree $T_1$ whose root is the node $u_10$ and add a copy of it to the node $1$ (picture on the bottom). 
Note that if $u_1=1^\ell$, we should add a copy of $T_1$ to each node of the form $1^j$ for $j=1,\ldots,\ell-1$ (if $\ell=1$, then no copy of $T_1$ is added).
 \begin{figure}[h!tb]
    \centering
    {\psfrag{T0}{$T_0$}\psfrag{T1}{$T_1$}\psfrag{T2}{$T_2$}\psfrag{T3}{$T_3$}
\psfrag{0}{$0$}\psfrag{1}{$1$}\includegraphics{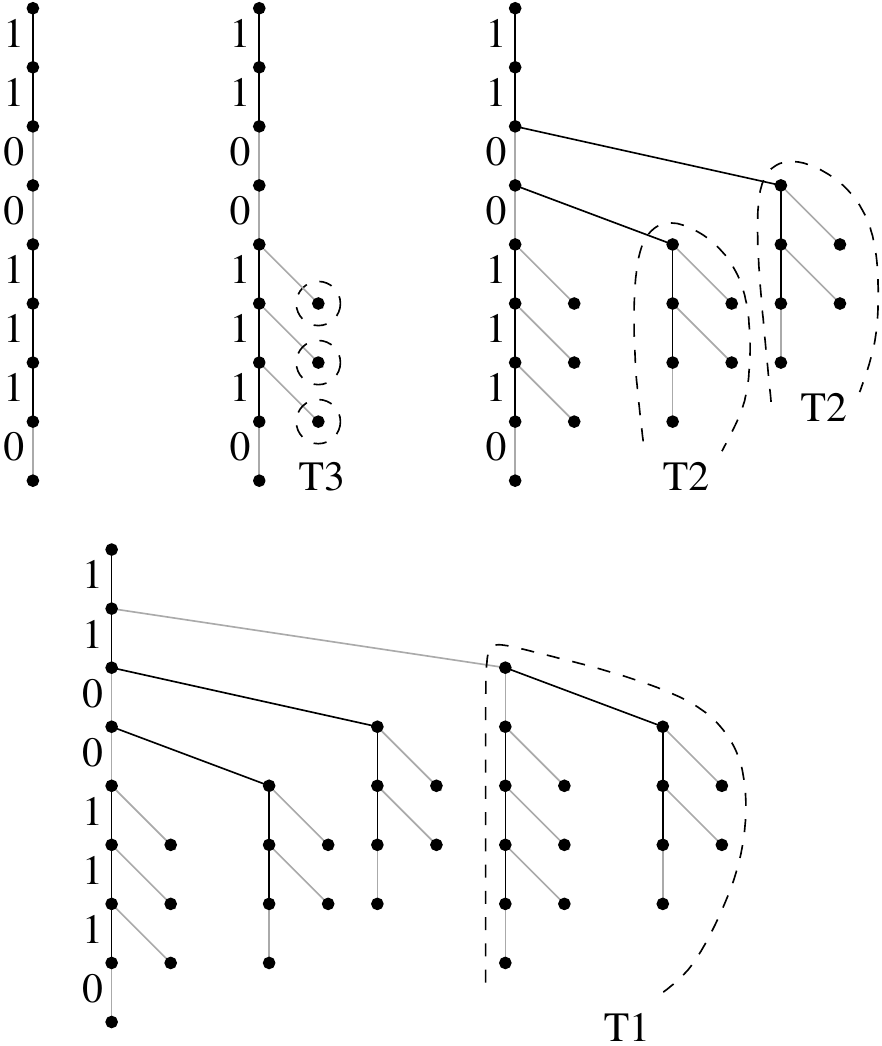}}
    \caption{Bottom-up construction of $\mathcal{T}_{L_2}(11001110)$.}
    \label{fig:trie2}
\end{figure}   
\end{example}

For the next statement, recall that $T_M$ is an empty tree. If $T$ is a tree, we let $\# T$ denote the number of nodes in $T$. This statement gives a recurrence relation to compute $S(n)$ directly from $\rep_2(n)$.
\begin{corollary}\label{cor:arbreTl}
Let $w$ be a finite word in $L_2$. With the above notation about $M$ and the subtrees $T_\ell$, the following holds.
The number of nodes in $T_{M-1}$ is $n_M$.
For $j=M-2,M-3,\ldots,0$, the number of nodes in $T_j$ is given by
$$\# T_j=n_{j+1}(\# T_{j+1}+1)+\# T_{j+2}.$$
The number of subwords in $L_2$ of $w$ is given by $1+\# T_0$.
\end{corollary}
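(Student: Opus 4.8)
The plan is to prove the three assertions of Corollary~\ref{cor:arbreTl} by reading off the structure described in Proposition~\ref{pro:forme_arbre} and Definition~\ref{def:subtree}. First I would establish that $T_{M-1}$ has exactly $n_M$ nodes. By definition, if $M$ is even, $w$ ends with the block $u_M = 0^{n_M}$ with $n_M\ge 1$, and $T_{M-1}$ is rooted at $u_1\cdots u_{M-1}0$; the only subwords of $w$ extending $u_1\cdots u_{M-1}$ are the prefixes $u_1\cdots u_{M-1}0^j$ for $j\le n_M$, since after the last alternation there is no further digit to branch on. Hence $T_{M-1}$ is the linear tree on the nodes $u_1\cdots u_{M-1}0,\ldots,u_1\cdots u_{M-1}0^{n_M}$, which has $n_M$ nodes. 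The case $M$ odd (so $u_M=1^{n_M}$ and $n_{2j}=0$, i.e.\ $M=2j-1$) is symmetric, using that in $\mathcal{T}_{L_2}$ one keeps only the child $1$ of the root when $M-1=0$.

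Next I would prove the recurrence $\# T_j = n_{j+1}(\# T_{j+1}+1)+\# T_{j+2}$ for $j$ from $M-2$ down to $0$. The tree $T_j$ is rooted at a node $x_0 := u_1\cdots u_j\,a$ where $a=1$ if $j$ even and $a=0$ if $j$ odd (and $x_0=1$ if $j=0$). Proposition~\ref{pro:forme_arbre} tells us that along the spine of $T_j$ we pass through the $n_{j+1}$ nodes $x_0, x_0 b, x_0 b^2,\ldots$ where $b=a$ is the repeated digit of block $u_{j+1}$ (for $j=0$ this is the spine $1,1^1 b,\ldots$; note $T_0$'s spine has $n_1$ nodes counting the root $1$ together with $1^2,\ldots,1^{n_1}$), and at each such node, except the last one, a fresh copy of $T_{j+2}$ is attached as the other child, while at the bottom node of this spine the continuation is precisely $T_{j+1}$ itself --- more carefully, the spine of $T_j$ consists of $u_1\cdots u_j$ followed by a partial block of $u_{j+1}$, every internal node of this spine carries a side-copy of $T_{j+2}$, and the spine terminates by handing off to $T_{j+1}$. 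Counting: $T_j$ has $n_{j+1}$ nodes on the part of the spine lying strictly inside block $u_{j+1}$, each of the first $n_{j+1}$ of them (equivalently, each spine node that is not the handoff point) contributes one attached copy of $T_{j+1}$... here I must be careful to match indices exactly with the three bulleted cases of Proposition~\ref{pro:forme_arbre}, which say $x1$ (resp.\ $x0$) is the root of a tree isomorphic to $T_{2k}$ (resp.\ $T_{2k+1}$) for $j$ ranging over $n_{2k}$ (resp.\ $n_{2k+1}$) values. So: $n_{j+1}$ nodes along the spine, each contributing a copy of the next subtree of size $\# T_{j+1}$, giving $n_{j+1}(1+\# T_{j+1})$, plus one extra copy of $T_{j+2}$ hanging off below --- this last term being $\# T_{j+2}$. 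Summing these gives the stated recurrence; for $j=M-2$ one has $T_{M}$ empty by convention, so $\# T_{M-2}=n_{M-1}(\#T_{M-1}+1)+0$, consistent with the base case. I would verify on the running example $11001110$ that $\#T_3=1$, $\#T_2=2\cdot2+0=4$... (this should reproduce the picture, giving $S(n)=1+\#T_0$).

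Finally, the number of subwords in $L_2$ of $w$ equals the number of nodes of $\mathcal{T}_{L_2}(w)$ by the Remark following Definition~\ref{def:trie}. The tree $\mathcal{T}_{L_2}(w)$ decomposes along its leftmost branch (the copy of $w$) together with the attached subtrees; but by Proposition~\ref{pro:forme_arbre} the whole of $\mathcal{T}_{L_2}(w)$ beyond the root $\varepsilon$ is exactly $T_0$ with its recursively attached copies already counted inside $\#T_0$, since $T_0$ is rooted at the child $1$ of $\varepsilon$ and the construction of Example~\ref{exa:bu} shows every node of $\mathcal{T}_{L_2}(w)$ other than $\varepsilon$ lies in $T_0$. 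Hence the total is $1+\#T_0$, the $1$ accounting for the root $\varepsilon$.

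The main obstacle I anticipate is purely bookkeeping: getting the index alignment exactly right between the three parity cases in Proposition~\ref{pro:forme_arbre}, the off-by-one in the spine count (does the root of $T_j$ count among the $n_{j+1}$ branching nodes, and where does the handoff to $T_{j+1}$ versus the extra $T_{j+2}$ occur?), and the special treatment of $j=0$ where the spine starts at node $1$ rather than $\varepsilon$. There is no deep difficulty --- once the decomposition of Definition~\ref{def:subtree} and Proposition~\ref{pro:forme_arbre} is in hand the recurrence is a matter of adding up node counts block by block --- but a clean writeup requires stating the spine/side-subtree decomposition of $T_j$ precisely before counting, so that the factor $n_{j+1}$ and the two summands $\#T_{j+1}$ and $\#T_{j+2}$ are unambiguously identified.
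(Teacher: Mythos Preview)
Your approach is correct and matches the paper's own (very terse) proof, which simply points to Proposition~\ref{pro:forme_arbre} and the bottom-up construction of Example~\ref{exa:bu}. Two small slips to clean up: first, in your informal description you twice write that the side-branches along the spine of $T_j$ are copies of $T_{j+2}$, but Proposition~\ref{pro:forme_arbre} says they are copies of $T_{j+1}$ (the $n_{j+1}-1$ internal spine nodes each carry a side-copy of $T_{j+1}$, the last spine node carries $T_{j+1}$ itself together with one copy of $T_{j+2}$, giving $n_{j+1}$ copies of $T_{j+1}$ and one of $T_{j+2}$ in total); your final count is right, but the narrative leading to it has the roles swapped. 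Second, your example check is off: for $w=11001110$ one has $n_3=3$, so $\#T_2 = 3(1+1)+0 = 6$, not $2\cdot 2 = 4$.
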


\begin{proof}
    The idea is the same as the one developed in the previous example. Start with a linear tree labeled by $w$ and add, with a bottom-up approach, all the possible subtrees given by Proposition~\ref{pro:forme_arbre}: first possible copies of $T_{M-1}$, then $T_{M-2}$, \ldots, $T_1$.
\end{proof}

\begin{remark}
    If we were interested in the number of subwords of $w$ (not only those in $L_2$), we must add the node $0$ which will be the root of a subtree equal to $T_1$. Thus, the total number of subwords occurring in $w$ is $1+\# T_0+\# T_1$.
\end{remark}

\begin{example}
    For the word $w=11001110$, $n_4=1$, $\# T_3=1$ and $\# T_4=0$. Thus, 
$\# T_2= 3 (1+1)+0=6$, $\# T_1= 2 (6+1)+1=15$, $\# T_0= 2 (15+1)+6=38$. We get that the number of subwords of $w$ in $L_2$ is $39$ and since, $\val_2(11001110)=206$, $S(206)=39$. Moreover, the total number of subwords of $11001110$ is $39+15=54$.
\end{example}

\section{A relation satisfied by the sequence $(S(n))_{n\ge 0}$}\label{sec:proof}

Thanks to the trie of subwords introduced in the previous section, we collect several results about the number of words occurring as subwords of words with a prescribed form. The aim of this section is to prove the following result. 

\begin{prop}\label{pro:rec}
    For all $\ell\ge 1$ and $0\le r < 2^\ell$, the sequence $(S(n))_{n\ge 0}$ given in Definition~\ref{def:S} satisfies $S(0)=1$ and $S(1)=2$ and 
\begin{equation}
    \label{eq:rec}
S(2^{\ell}+r)=\left\{
    \begin{array}{ll}
        S(2^{\ell-1}+r)+S(r),& \text{ if } 0 \le r < 2^{\ell-1};\\
        S(2^{\ell+1}-r-1),& \text{ if } 2^{\ell-1} \le r < 2^\ell.\\
    \end{array}\right.
\end{equation}
\end{prop}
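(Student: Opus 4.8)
The plan is to extract the recurrence directly from the bottom-up description of $\mathcal{T}_{L_2}(\rep_2(n))$ given by Corollary~\ref{cor:arbreTl}, by analyzing how the block factorization \eqref{eq:factoriz} of $\rep_2(2^\ell+r)$ relates to that of the relevant smaller integers. Writing $n = 2^\ell + r$ with $0\le r<2^\ell$, the binary expansion $\rep_2(n)$ is the word $1\,v$ where $v$ is the length-$\ell$ (zero-padded) expansion of $r$. The key observation is that prepending the leading $1$ interacts with the first block of $v$: if $v$ starts with $0$ (the case $0\le r<2^{\ell-1}$), then $\rep_2(n)$ has first block $u_1 = 1^1$ followed by the blocks of $v$; if $v$ starts with $1$ (the case $2^{\ell-1}\le r<2^\ell$), then the leading $1$ merges into the first block of $v$, lengthening it by one.

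First I would treat the case $2^{\ell-1}\le r<2^\ell$. Here $\rep_2(n) = 1 w'$ where $w'$ starts with $1$, so $\rep_2(n)$ and $\rep_2(r')$ for a suitable $r'$ differ only in the length $n_1$ of their first block of ones. I would recall from Corollary~\ref{cor:arbreTl} that $S(n) = 1 + \#T_0$ and that $\#T_0 = n_1(\#T_1 + 1) + \#T_2$, while all the subtrees $T_1, T_2, \ldots, T_{M-1}$ depend only on the blocks $n_2, \ldots, n_M$, which are unchanged. A short computation with the recurrence for $\#T_0$ shows $S$ is an affine function of $n_1$ with a fixed slope $\#T_1 + 1$ and intercept depending only on the later blocks; I then need to identify the integer whose expansion has the same later blocks but first block of length $n_1 - (\text{something})$, or alternatively exhibit a reflection. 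The cleanest route is to verify the identity $S(2^\ell + r) = S(2^{\ell+1} - r - 1)$ by checking that $\rep_2(2^{\ell+1}-r-1)$ is obtained from $\rep_2(2^\ell+r)$ by a block-preserving operation that changes only $n_1$ in a way compatible with the affine dependence — concretely, $2^{\ell+1}-r-1$ and $2^\ell + r$ have complementary low-order bits in a way that swaps the roles symmetrically; I would make this bijection on expansions precise and then invoke that $S$ depends on $n_1$ affinely with the stated data to conclude both sides agree.

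Next I would handle $0\le r<2^{\ell-1}$, where $\rep_2(n) = 1\,0\,v''$ with the leading $1$ forming its own block $u_1 = 1$, i.e.\ $n_1 = 1$. Here the factorization of $\rep_2(n)$ has first block $1^1$, second block the initial zeros, and so on, and I would compare with $\rep_2(2^{\ell-1}+r)$, which is $1\,v''$ (first block again $1^1$ but the second block shortened — actually the leading $1$ now sits against $v''$ whose first bit governs whether it merges). The point is that the subtrees $T_1, T_2, \ldots$ attached below the first two blocks of $\rep_2(n)$ are shared between $\rep_2(n)$ and the two smaller integers, and Corollary~\ref{cor:arbreTl}'s recurrence $\#T_j = n_{j+1}(\#T_{j+1}+1) + \#T_{j+2}$ lets me express $\#T_0$ for $n$ in terms of $\#T_1$ and $\#T_2$ of $n$, which in turn match subtree counts appearing in the expansions of $S(2^{\ell-1}+r)$ and $S(r)$. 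I would set up the correspondence of blocks carefully (distinguishing whether $r=0$, i.e.\ $v''$ all zeros, which forces a base-case check, and the generic case) and then the additive identity $S(2^\ell+r) = S(2^{\ell-1}+r) + S(r)$ should fall out by substituting the recurrence once.

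The main obstacle I anticipate is bookkeeping the block factorizations under the two operations "prepend $1$" and "prepend $10$", especially edge cases: when $r$ has a short or all-zero expansion, when $n_{2j}=0$ (trailing zeros absent), and correctly matching the empty tree $T_M$ convention. There is a real risk of off-by-one errors in the block indices $n_k$ and in which $T_\ell$ of the smaller integer corresponds to which $T_\ell$ of $n$. To control this I would first prove a clean lemma isolating exactly how $\#T_0$ — equivalently $S(n)-1$ — changes when one prepends a letter to a word of $L_2$, expressing the new value in terms of the old $\#T_0$ and $\#T_1$ (this is essentially one application of Corollary~\ref{cor:arbreTl}), and then reduce both cases of \eqref{eq:rec} to that lemma together with the observation that the relevant smaller integers share all but the top one or two blocks with $n$. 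The base cases $S(0)=1$, $S(1)=2$ are immediate from Definition~\ref{def:S} or Table~\ref{tab:tp}, and small cases can be cross-checked against the listed values $1,2,3,3,4,5,5,4,\ldots$ to catch indexing slips.
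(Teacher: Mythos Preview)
Your plan for the palindrome case $2^{\ell-1}\le r<2^\ell$ contains a genuine error. You claim that $\rep_2(2^{\ell+1}-r-1)$ is obtained from $\rep_2(2^\ell+r)$ by ``a block-preserving operation that changes only $n_1$,'' and you want to exploit the affine dependence of $S$ on $n_1$. This is false. Take $\ell=3$, $r=5$: then $\rep_2(13)=1101$ has block sequence $(2,1,1)$, while $\rep_2(2^4-5-1)=\rep_2(10)=1010$ has block sequence $(1,1,1,1)$. The two words do not share their later blocks, so no argument based on ``same $T_1,T_2,\ldots$ and only $n_1$ varies'' can connect them. What actually relates these two words is that $\rep_2(2^{\ell+1}-r-1)=1\underline{z}$ when $\rep_2(2^\ell+r)=1z$, where $\underline{z}$ is the bitwise complement of $z$. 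The paper's Lemma~\ref{lem:palindrome} then observes that $\mathcal{T}_{L_2}(1z)$ and $\mathcal{T}_{L_2}(1\underline{z})$ are isomorphic via the node bijection $1x\leftrightarrow 1\underline{x}$; this is a direct argument on trees, not on block counts, and it is what you are missing.

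Your plan for the additive case $0\le r<2^{\ell-1}$ is closer in spirit to the paper but more optimistic than warranted. The paper splits on the first letter of the length-$(\ell-1)$ suffix $u$ (so $\rep_2(2^\ell+r)=10u$): when $u=1u'$ it proves the tree identity $S(101u')=S(1u')+S(11u')$ (Lemma~\ref{lem:u=101}), which is exactly the desired relation; but when $u=0u'$ it instead proves $S(100u')=2S(10u')-S(1u')$ (Lemma~\ref{lem:u=100}) and then needs \emph{induction on $\ell$} to convert this into $S(2^{\ell-1}+r)+S(r)$. Your expectation that the identity ``should fall out by substituting the recurrence once'' is not borne out: the block structures of $\rep_2(2^{\ell-1}+r)$ and $\rep_2(r)$ do not align with those of $\rep_2(2^\ell+r)$ in a way that a single application of Corollary~\ref{cor:arbreTl} resolves, precisely because stripping leading digits can merge or split blocks depending on the next bit.
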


This will permit us to show that the sequence $(S(n))_{n\ge 0}$ is the Farey sequence (see Section~\ref{sec:2}).

\begin{lemma}\label{lem:palindrome}
Let $u$ be a word in $\{0,1\}^*$. Define $\underline{u}$ by replacing in $u$ every $0$ by $1$ and every $1$ by $0$. Then
$$
\#\left\{ v\in L_2 \mid \binom{1u}{v} > 0 \right\} = \#\left\{ v\in L_2 \mid \binom{1\underline{u}}{v} > 0 \right\}.
$$
In particular, this means that $S(2^{\ell}+r)=S(2^{\ell+1}-r-1)$, if $2^{\ell-1} \le r < 2^\ell$.
\end{lemma}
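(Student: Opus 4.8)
The plan is to establish a bijection between the subwords of $1u$ (belonging to $L_2$) and the subwords of $1\underline{u}$ (belonging to $L_2$). The natural candidate is the complementation map $v \mapsto \underline{v}$, but this does not immediately work: if $v$ begins with $1$ then $\underline{v}$ begins with $0$, so $\underline{v} \notin L_2$. Instead, I would first reverse and then complement, or exploit the structure of $L_2$ more carefully. The cleanest route is to observe that every nonempty $v \in L_2$ is of the form $1x$ with $x \in \{0,1\}^*$, and to work with the map $1x \mapsto 1\underline{x}$ together with a reversal argument. More precisely, I would use the classical identity relating binomial coefficients of words to their reversals: $\binom{u}{v} = \binom{\widetilde{u}}{\widetilde{v}}$, where $\widetilde{\cdot}$ denotes the mirror image. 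The key combinatorial fact is that $1\widetilde{\underline{u}}$ and $1\underline{\widetilde u}$ are related in a way that lets the alphabet flip commute with taking subwords.

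Here is the concrete strategy. First I would set up the elementary lemma that complementation $v \mapsto \underline{v}$ is a length-preserving bijection on $\{0,1\}^*$ satisfying $\binom{\underline{w}}{\underline{v}} = \binom{w}{v}$ for all words $w,v$ (this is immediate, since a scattered occurrence of $v$ in $w$ maps letter-by-letter to a scattered occurrence of $\underline v$ in $\underline w$). Hence $\binom{1u}{v} > 0 \iff \binom{\underline{1u}}{\underline v} > 0 \iff \binom{0\underline u}{\underline v} > 0$. Now the issue is passing from the prefix $0\underline u$ back to a prefix $1$; for this I would pair complementation with mirroring: using $\binom{w}{v} = \binom{\widetilde w}{\widetilde v}$, we get $\binom{1u}{v} > 0 \iff \binom{\widetilde{1u}}{\widetilde v} > 0$, i.e. occurrences of $v$ in $1u$ biject with occurrences of $\widetilde v$ in $u1$. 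Combining the mirror and complement operations, the map $v \mapsto \widetilde{\underline v}$ (or an appropriate composite fixing the leading $1$) should send $L_2$-subwords of $1u$ to $L_2$-subwords of $1\underline u$; I would check that the leading letter is handled correctly, possibly by treating $v = \varepsilon$ separately and, for nonempty $v = 1x$, verifying that exactly one of $\widetilde{\underline{v}}$, or the word obtained by stripping/reattaching a leading $1$, lands in $L_2$ and still has positive binomial coefficient in $1\underline u$. The count of such $v$ is therefore preserved, which is exactly the claimed equality $\#\{v \in L_2 : \binom{1u}{v}>0\} = \#\{v \in L_2 : \binom{1\underline u}{v}>0\}$.

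Finally, for the ``in particular'' clause: if $2^{\ell-1} \le r < 2^\ell$, write $\rep_2(2^\ell + r) = 1u$ for a suitable $u \in \{0,1\}^*$ of length $\ell$. A direct bit-manipulation check shows that $2^{\ell+1} - r - 1 = (2^{\ell+1}-1) - r$ has base-$2$ expansion $1\underline u$: indeed $2^{\ell+1}-1 = 1^{\ell+1}$ in binary, and subtracting $r$ (whose $\ell$ low-order bits are $u$, after discarding the leading $1$ of $1u$, with appropriate bookkeeping) flips exactly those bits, leaving the leading $1$ intact. Then $S(2^\ell + r) = \#\{v \in L_2 : \binom{1u}{v}>0\}$ and $S(2^{\ell+1}-r-1) = \#\{v \in L_2 : \binom{1\underline u}{v}>0\}$ by Definition~\ref{def:S}, so the two are equal by the first part.

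\emph{Main obstacle.} The delicate point is \emph{not} the complementation identity itself, which is trivial, but reconciling it with the constraint that subwords must lie in $L_2$ (i.e. start with $1$ or be empty). The complement of a word starting with $1$ starts with $0$, so a naive complementation leaves $L_2$. Getting an honest bijection requires composing with the mirror operation (or an equivalent device) and then carefully verifying that the leading-$1$ condition is respected; this case analysis on the first letter — and confirming the binary-expansion identity $\rep_2(2^{\ell+1}-r-1) = 1\underline u$ — is where the real work lies, though it is elementary once set up correctly.
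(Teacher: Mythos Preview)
Your proposal circles around the right bijection but never lands on it, and the reversal detour is both unnecessary and left unfinished. You correctly name the map $1x \mapsto 1\underline{x}$ as a candidate, then immediately abandon it in favour of a composite involving $\widetilde{\cdot}$ that you only describe as ``an appropriate composite fixing the leading $1$'' to be checked later. That is the gap: no concrete bijection is ever written down and verified.

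The paper's proof is a single observation, phrased as an isomorphism of the tries $\mathcal{T}_{L_2}(1u)$ and $\mathcal{T}_{L_2}(1\underline{u})$: the node $1x$ in the first tree corresponds to $1\underline{x}$ in the second. Stripped of the tree language, this is exactly the map $1x \mapsto 1\underline{x}$ (together with $\varepsilon \mapsto \varepsilon$), and it works on its own with no reversal. The one-line justification you are missing is
\[
\binom{1u}{1x} > 0 \iff \binom{u}{x} > 0,
\]
which holds because the leading $1$ of $1x$ can always be matched at the first position of $1u$ (formally, $\binom{1u}{1x} = \binom{u}{1x} + \binom{u}{x}$, and both directions of the equivalence follow). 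Chaining this with your own observation $\binom{u}{x} > 0 \iff \binom{\underline{u}}{\underline{x}} > 0$ and then the same equivalence applied to $1\underline{u}$ gives $\binom{1u}{1x} > 0 \iff \binom{1\underline{u}}{1\underline{x}} > 0$, and the count is preserved. Your treatment of the ``in particular'' clause is essentially correct; note that the identity $S(2^\ell+r)=S(2^{\ell+1}-r-1)$ actually holds for all $0\le r<2^\ell$, not just the stated range.
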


\begin{proof}
    It is enough to observe that the trees $\mathcal{T}_{L_2}(1u)$ and $\mathcal{T}_{L_2}(1\underline{u})$ are isomorphic. Each node of the form $1x$ in the first tree corresponds to the node $1\underline{x}$ in the second one and conversely.

For the special case, note that for every word $z$ of length $\ell$, there exists $r\in\{0,\ldots,2^\ell-1\}$ such that $\rep_2(2^\ell +r)=1z$ and
$$\val_2(\underline{z})=2^\ell-r-1 \in\{0\ldots,2^\ell-1\}.$$
Hence, $1\underline{z}=\rep_2(2^{\ell+1}-r-1)$. Using \eqref{eq:defS}, we obtain the desired result.
\end{proof}

\begin{lemma}\label{lem:u=100}
Let $u$ be a word in $\{0,1\}^*$. Then 
$$\#\left\{ v\in L_2 \mid \binom{100u}{v} > 0 \right\} = 2 \cdot \#\left\{ v\in L_2 \mid \binom{10u}{v} > 0 \right\} - \#\left\{ v\in L_2 \mid \binom{1u}{v} > 0 \right\}.$$
\end{lemma}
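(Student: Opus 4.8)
The plan is to compare the three tries $\mathcal{T}_{L_2}(1u)$, $\mathcal{T}_{L_2}(10u)$ and $\mathcal{T}_{L_2}(100u)$ using the block factorization of Definition~\ref{def:subtree} and the bottom-up reconstruction of Corollary~\ref{cor:arbreTl}, since the quantity $\#\{v\in L_2\mid\binom{w}{v}>0\}$ equals $1+\#T_0(w)$. First I would write $u$ in the factorized form \eqref{eq:factoriz}. There are two cases depending on whether $u$ begins with a $0$ or a $1$ (or is empty), because prepending $1$, $10$, or $100$ changes the leading block structure differently. When $u$ starts with $1$ (say $u=1^{n}0^{\cdots}$), the word $1u=1^{n+1}\cdots$ has first block length $n_1+1$ where $n_1$ is the first block length of $1u'$ for the relevant normalization; the words $10u$ and $100u$ have first block $1$, second block $0$ of length $1$ resp.\ $2$, then the blocks of $u$. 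When $u$ starts with $0$, all three words have first block $1$ of length $1$, and the second block of zeroes has length $n_2+1$, $n_2+2$, $n_2+... $ Rather than splitting cases by hand, the cleaner route is: the subtrees $T_1,T_2,\ldots,T_{M-1}$ attached below the "head" of the word are \emph{identical} in all three tries (they depend only on the blocks of $u$ past the first two alternations, hence on $u$ alone, not on the prefix $1$, $10$ or $100$), so the recurrences of Corollary~\ref{cor:arbreTl} for $\#T_j$ with $j$ large agree, and only the topmost one or two block-lengths $n_1$ (and possibly $n_2$) differ by a controlled amount.

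Concretely, I would set up the computation so that in each of the three tries the "common part" is a fixed collection of subtrees, say with sizes $A=\#T_2+1$ and so on, and express $\#T_0$ in each case as an explicit affine function of the first block length. Writing $f(m)$ for the number of nodes in the trie $\mathcal{T}_{L_2}$ of the word obtained by using first block length $m$ followed by the fixed tail coming from $u$, Corollary~\ref{cor:arbreTl} gives, after unwinding the recursion for $T_1$ and $T_0$, an expression of the shape $f(m)=\alpha m+\beta$ for constants $\alpha,\beta$ depending only on $u$ (since $\#T_1$ itself depends on $m$ through $n_1$, one gets $\#T_0=n_1(\#T_1+1)+\#T_2$ with $\#T_1=n_2(\#T_2+1)+\#T_3$ fixed when $u$ starts with $0$, giving linearity in $n_1$). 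The three words $1u,10u,100u$ then correspond to three consecutive values of the relevant parameter(s), and the identity $\#(100u)=2\#(10u)-\#(1u)$ is exactly the statement that $f$ is an affine function: $f(k+1)-f(k)$ is constant, i.e.\ $f(k+2)-f(k+1)=f(k+1)-f(k)$, which rearranges to $f(k+2)=2f(k+1)-f(k)$. Adding the ambient $1$ (for the root) to all three terms is consistent with this linear relation. The case analysis on the leading letter of $u$ determines whether the varying parameter is the length of the first block of ones or of the first block of zeroes, but in both cases the key structural fact—linearity of the node count in that block length, with slope and intercept determined by the fixed tail—yields the claim.

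The main obstacle I anticipate is bookkeeping around the boundary/degenerate cases: when $u=\varepsilon$ (so we compare $1$, $10$, $100$), when $u$ consists of a single block, and when prepending a digit \emph{merges} with the first block of $u$ versus \emph{creates} a new block—these change which $n_i$ is the "moving" parameter and whether $T_{M-1}$ or $T_M$ is the empty tree, so the indices in Corollary~\ref{cor:arbreTl} shift. I would handle this by treating "$u$ starts with $0$ or $u=\varepsilon$" uniformly (first block of the augmented word is always $1$, and $100u$ vs $10u$ vs $1u$ differ only in the length of the second block, which is $0$) and "$u$ starts with $1$" separately (here $1u$ merges, but $10u$ and $100u$ do not, so one compares the merged word against two words with an extra leading $10$/$100$ block pair), checking in each subcase that the affine/linear relation survives. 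An alternative, possibly shorter, finish is to give an explicit bijection: a copy-counting argument showing the subwords of $100u$ split as "those reusing at most one of the two leading zeroes" (a copy of the subwords of $10u$) plus "those needing both zeroes" (the subwords of $10u$ that are \emph{not} subwords of $1u$), which is an inclusion-exclusion identity producing exactly $2\#(10u)-\#(1u)$; but the trie computation via Corollary~\ref{cor:arbreTl} is likely the smoother path given the machinery already developed.
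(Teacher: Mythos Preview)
Your plan is sound and, like the paper, rests on the trie structure; the difference is in execution. You route the count through Corollary~\ref{cor:arbreTl} and argue that the node count is affine in a moving block-length parameter, which forces a case split on the first letter of $u$ (merge versus non-merge of the leading block) and some boundary bookkeeping. The paper's proof is shorter and avoids this split entirely: it distinguishes only whether $u$ contains a $1$. If not, all three tries are linear chains of lengths $n{+}2,n{+}3,n{+}4$ and the identity is immediate. If $u$ contains a $1$, write the shortest prefix of $1u$ as $10^r1$ and let $S$ be the subtree of $\mathcal{T}_{L_2}(1u)$ rooted at $1$ and $R$ the subtree rooted at $10^r1$; then $\mathcal{T}_{L_2}(10u)$ carries one copy of $S$ (rooted at $10$) and one of $R$ (rooted at $11$), while $\mathcal{T}_{L_2}(100u)$ carries one $S$ (at $100$) and two copies of $R$ (at $11$ and $101$), and one checks $3+\#S+2\#R=2(2+\#S+\#R)-(1+\#S)$. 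The definition of $R$ via $10^r1$ absorbs the distinction between ``$u$ starts with $0$'' ($r\ge 1$) and ``$u$ starts with $1$'' ($r=0$), which is exactly the merge case that costs you extra work. Your closing inclusion-exclusion remark is in fact closer in spirit to the paper's argument than your main block-recursion route.
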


\begin{proof}
    Our reasoning is again based on the structure of the trees. Assume first that $u$ has no $1$, then $u=0^n$, $n\ge 0$. The tree
$\mathcal{T}_{L_2}(1u)$ is linear and has $n+2$ nodes, $\mathcal{T}_{L_2}(10u)$ has $n+3$ nodes and $\mathcal{T}_{L_2}(100u)$ has $n+4$ nodes. The formula holds.

Now assume that $u$ has a $1$. First observe that the subtree $S$ of $\mathcal{T}_{L_2}(1u)$ with root $1$ is equal to the subtree of $\mathcal{T}_{L_2}(10u)$ with root $10$ and also to the subtree of $\mathcal{T}_{L_2}(100u)$ with root $100$. Consider the shortest prefix of $1u$ of the form $10^r1$. Let $R$ be the subtree of $\mathcal{T}_{L_2}(1u)$ with root $10^r1$. The subtree of $\mathcal{T}_{L_2}(10u)$ with root $11$ is $R$. Similarly, $\mathcal{T}_{L_2}(100u)$ contains two copies of $R$: the subtrees of root $11$ and $101$. The situation is depicted in Figure~\ref{fig:RS} and the following formula holds: 
$3+\# S+2\# R=2(2+\# S+\# R)-(1+\# S)$.
\begin{figure}[h!tb]
    \centering
    {\psfrag{R}{$R$}\psfrag{S}{$S$}\psfrag{0}{$0$}\psfrag{1}{$1$}\includegraphics{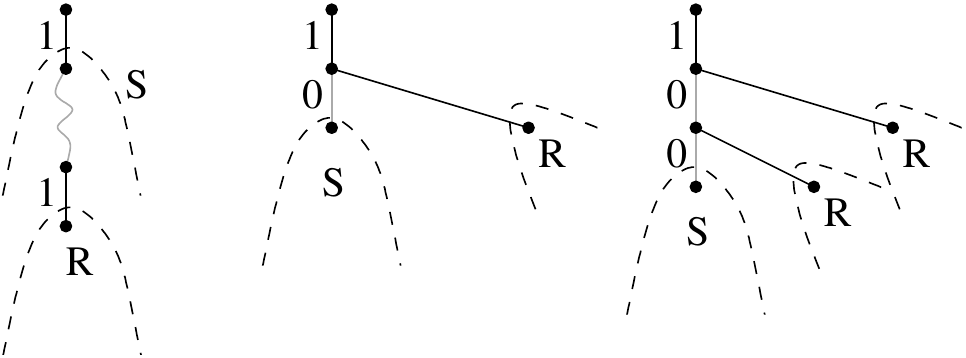}}
    \caption{Structure of the trees.}
    \label{fig:RS}
\end{figure}
\end{proof}

\begin{lemma}\label{lem:u=101}
Let $u$ be a word in $\{0,1\}^*$. Then 
$$\#\left\{ v\in L_2 \mid \binom{101u}{v} > 0 \right\} = \#\left\{ v\in L_2 \mid \binom{1u}{v} > 0 \right\} + \#\left\{ v\in L_2 \mid \binom{11u}{v} > 0 \right\}.$$
\end{lemma}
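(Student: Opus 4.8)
The strategy mirrors the proofs of Lemmas~\ref{lem:u=100} and~\ref{lem:palindrome}: compare the three tries $\mathcal{T}_{L_2}(101u)$, $\mathcal{T}_{L_2}(1u)$ and $\mathcal{T}_{L_2}(11u)$ node by node, using the structural description from Proposition~\ref{pro:forme_arbre} and the observation that certain subtrees of these tries are isomorphic. First I would split off the degenerate case where $u$ contains no letter $1$, i.e. $u=0^n$ with $n\ge 0$: then $\mathcal{T}_{L_2}(101u)$, $\mathcal{T}_{L_2}(1u)$ and $\mathcal{T}_{L_2}(11u)$ are all linear (since $101u=1010^{n}$ still has $L_2$-subwords forming a single branch once we note $101$, $1010$, \dots contribute nothing new beyond the prefix path), with $n+3$, $n+2$ and $n+2$ nodes respectively, so the identity $n+3=(n+2)+(n+2)$ fails — hence I should be careful: in fact I expect $\mathcal{T}_{L_2}(11u)=\mathcal{T}_{L_2}(110^n)$ has more nodes than the linear bound, so this base case needs an honest count rather than a guess, and this is exactly where I'd slow down.

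The main case is $u$ containing at least one $1$. Here I would isolate, as in Lemma~\ref{lem:u=100}, the shortest prefix of $1u$ of the form $10^r1$ and let $R$ be the subtree of $\mathcal{T}_{L_2}(1u)$ rooted at $10^r1$, and let $S$ be the subtree of $\mathcal{T}_{L_2}(1u)$ rooted at the node $1$ (equivalently the subtree rooted at $10$, $11$, $100$, etc., in the larger tries, all isomorphic to $S$ by the prefix-shift phenomenon already exploited twice above). The point is to read off from Proposition~\ref{pro:forme_arbre} exactly which copies of $R$ and $S$ appear in each of the three tries. For $\mathcal{T}_{L_2}(1u)$: one copy of $S$ at node $1$ and one copy of $R$ at node $10^r1$, plus the connecting path. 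For $\mathcal{T}_{L_2}(11u)=\mathcal{T}_{L_2}(1\,1u)$: the leading block of ones is now $1^2$, so node $1$ gets a copy of $T_1$ which is the subtree rooted at the first $0$; unwinding, $\mathcal{T}_{L_2}(11u)$ contains a copy of $S$ (rooted at $11$) and an extra copy of $R$ produced by the second $1$. For $\mathcal{T}_{L_2}(101u)$: the factorization starts $1\,0\,1\ldots$, so node $10$ acquires the subtree rooted at $101$, and again $S$ appears (rooted at $100$) together with copies of $R$. After tabulating, the node counts should read $\#\mathcal{T}_{L_2}(101u)=c_1+\#S+a_1\#R$, $\#\mathcal{T}_{L_2}(1u)=c_2+\#S+a_2\#R$, $\#\mathcal{T}_{L_2}(11u)=c_3+\#S+a_3\#R$ for small explicit constants, and the claimed identity becomes the numerical check $c_1+\#S+a_1\#R=(c_2+\#S+a_2\#R)+(c_3+\#S+a_3\#R)$, which forces $\#S$ to cancel with a copy on the right — so I expect the bookkeeping to show $101u$ contains exactly \emph{two} copies of $S$ (one for the prefix $10$-branch, one further down), matching the two $S$'s on the right-hand side, and the $R$-coefficients and the path-length constants to balance. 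A clean figure analogous to Figure~\ref{fig:RS} would make this transparent.

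The hard part will be getting the subtree decomposition of $\mathcal{T}_{L_2}(11u)$ right: the extra leading $1$ changes the block structure in~\eqref{eq:factoriz} (it lengthens $u_1$ rather than creating a new block), so Proposition~\ref{pro:forme_arbre}'s third bullet applies and node $1$ sponsors a copy of $T_1$; I need to be sure this $T_1$ is exactly the subtree I am calling $S$ (or differs from it in a controlled way), and likewise that the $R$ appearing inside $T_1$ is the same $R$ as before. An alternative, possibly safer, route is to avoid the explicit block combinatorics and instead argue directly on subwords via~\eqref{eq:defS}: a nonempty word $v=1v'\in L_2$ satisfies $\binom{101u}{v}>0$ iff either $v'=\varepsilon$, or $v'$ starts with $0$ and $\binom{01u}{v'}>0$ or $\binom{1u}{v'}>0$ (reading the first $1$ of $v$ off the leading $1$ of $101u$, with the residual $01u$), or $v'$ starts with $1$ and $\binom{1u}{v'}>0$ — then a short inclusion–exclusion on these cases, together with the bijection $1u\leftrightarrow\mathcal{T}_{L_2}(1u)$ and the analogous description for $11u$, yields the count. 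I would try the trie argument first for consistency with the rest of the paper, falling back on this subword-counting argument if the block bookkeeping gets unwieldy.
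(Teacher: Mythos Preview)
Your overall strategy---compare the three tries via Proposition~\ref{pro:forme_arbre} and express each node count as a constant plus copies of two reference subtrees---is exactly the paper's approach. But you have the case split and the choice of $R$ the wrong way round, and with your choices the bookkeeping will not come out as you predict.

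The paper's base case is $u=1^n$ (no $0$ in $u$), not $u=0^n$. Then $1u=1^{n+1}$, $11u=1^{n+2}$ and $101u=101^{n+1}$ have respectively $n+2$, $n+3$ and $2n+5$ nodes, and $(n+2)+(n+3)=2n+5$. In the main case ($u$ contains a $0$), the paper looks at the shortest prefix of $101u$ of the form $101^{r}0$, sets $S$ to be the subtree of $\mathcal{T}_{L_2}(101u)$ rooted at $101$ (isomorphic to your $S$, the subtree of $\mathcal{T}_{L_2}(1u)$ at $1$), and sets $R$ to be the subtree rooted at $101^{r}0$. With this $R$ the three tries have $1+\#S$, $2+\#S+\#R$ and $3+2\#S+\#R$ nodes, and the identity is the one-line check $(1+\#S)+(2+\#S+\#R)=3+2\#S+\#R$.

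The reason your $R$ (rooted at $10^{r}1$, the first $1$ of $u$) does not work cleanly is this: the ``extra'' subtree that appears in $\mathcal{T}_{L_2}(11u)$---the subtree at node $10$---consists of the words $10x$ with $0x$ a subword of $u$, i.e.\ it counts $0$-extensions of $u$. The same subtree reappears in $\mathcal{T}_{L_2}(101u)$ at node $100$. Your $R$ instead counts $1$-extensions of $u$; it is already contained inside $S$ and does not occur as a separate branch of either $\mathcal{T}_{L_2}(11u)$ or $\mathcal{T}_{L_2}(101u)$. Concretely, with your $S$ and $R$ the honest counts are $1+\#S$, $1+2\#S-\#R$ and $2+3\#S-\#R$: the identity still holds algebraically, but you will not see ``two copies of $S$ and some copies of $R$'' in the tree, and the coefficients are not what you guessed. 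Swap to the paper's $R$ (first $0$ after the block of $1$'s) and the picture becomes exactly the one you were hoping to draw.
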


\begin{proof}
    The reasoning is similar to the one of the previous proof. If $u$ is of the form $1^n$, then $\mathcal{T}_{L_2}(101u)$ has $2n+5$ nodes and $\mathcal{T}_{L_2}(1u)$ and $\mathcal{T}_{L_2}(11u)$  have respectively $n+2$ and $n+3$ nodes. If $u$ has a $0$, then consider the shortest prefix of $101u$ of the form $101^r0$. Let $S$ be the subtree of $\mathcal{T}_{L_2}(101u)$ with root $101$ and $R$ be the subtree with root $101^r0$. The tree $\mathcal{T}_{L_2}(101u)$ (resp., $\mathcal{T}_{L_2}(1u)$, $\mathcal{T}_{L_2}(11u)$) has $3+2\# S+\# R$ (resp., $1+\# S$, $2+\# S+\# R$) nodes.
\end{proof}

\begin{proof}[Proof of Proposition~\ref{pro:rec}]
Consider some integers $\ell$ and $r$ with $\ell \geq 1$ and $0 \leq r < 2^{\ell}$.
If $2^{\ell-1} \le r < 2^\ell$, the equality $S(2^\ell+r) = S(2^{\ell+1}-r-1)$ directly follows from Lemma~\ref{lem:palindrome}.

So let us suppose $0 \leq r < 2^{\ell-1}$.
We proceed by induction on $\ell$.
The case $\ell = 1$ is easily checked. Let us suppose $\ell \geq 2$.
By definition we have
\[
    S(2^\ell+r)= \#\left\{v\in L_2\mid \binom{\rep_2(2^\ell+r)}{v}>0\right\}.
\]
Since $r < 2^{\ell-1}$, we have $\rep_2(2^\ell+r) = 10u$ for some word $u \in \{0,1\}^*$ of length $\ell-1$ such that $r=\val_2(u)$. We consider two cases depending on the first letter occurring in $u$.

If $u \in 0 \{0,1\}^*$, then $r=\val_2(u) \le 2^{\ell-2}-1$ and we deduce from Lemma~\ref{lem:u=100} that 
$$S(2^\ell+r)= 2 S(2^{\ell-1}+r) - S(2^{\ell-2}+r).$$
The proof is complete after using the induction hypothesis twice:
\begin{eqnarray*}
    S(2^\ell+r)&=&2(S(2^{\ell-2}+r)+S(r))-S(2^{\ell-2}+r)\\
&=& S(2^{\ell-2}+r)+S(r)+S(r)\\
&=&S(2^{\ell-1}+r)+S(r).
\end{eqnarray*}

If $u=1u'$ for some $u'\in\{0,1\}^*$, then 
$\val_2(101u')=2^\ell+r$, $\val_2(1u')=\val_2(u)=r$ and $\val_2(11u')=2^{\ell-1}+r$. We deduce from Lemma~\ref{lem:u=101} applied to $u'$ that 
\[
	S(2^\ell+r) = S(r) + S(2^{\ell-1}+r),
\]
which finishes the proof.
\end{proof}

\section{The sequence {\tt A007306}, Farey tree and Stern--Brocot sequence}\label{sec:2}

Plugging in the first few terms of $(S(n))_{n\ge 0}$ in Sloane's On-Line Encyclopedia of Integer Sequences \cite{EOIS}, it seems to be a shifted version of the sequence {\tt A007306} of the denominators occurring in the Farey tree (left subtree of the full Stern--Brocot tree), which contains every (reduced) positive rational less than $1$ exactly once. Many papers deal with this tree; for instance, see \cite{Bates,Glasby}. 
The {\em Farey tree} is an infinite binary tree made up of mediants. 
Given two reduced fractions $\frac{a}{b}$ and $\frac{c}{d}$, with $a,b,c,d \in \mathbb{N}$, their {\em mediant} is the fraction $\frac{a}{b} \oplus \frac{c}{d} = \frac{a+c}{b+d}$.
This operation is known as {\em child's addition}.
Observe that for all $\frac{a}{b}$, $\frac{c}{d}$ with $\frac{a}{b} < \frac{c}{d}$, we have $\frac{a}{b} < \frac{a}{b} \oplus \frac{c}{d} < \frac{c}{d}$.

\begin{definition}
Starting from the fractions $\frac{0}{1}$ and $\frac{1}{1}$, the {\em Farey tree} is the infinite tree defined as follows:
\begin{enumerate}
\item 
the set of nodes is partitioned into levels indexed by $\mathbb{N}$;
\item
the level $0$ consists in $\{\frac{0}{1},\frac{1}{1}\}$;
\item
the level $1$ consists in $\{\frac{1}{2}\}$. The node $\frac12$ is the only one with two parents, which are $\frac{0}{1}$ and $\frac{1}{1}$;
\item
for each $n \geq 2$, the level $n$ consists in the children of the nodes of vertices in level $n-1$.
For each node $\frac{a}{b}$ of level $n-1$, we define 
\begin{eqnarray*}
	\mathrm{Left}\left(\frac{a}{b}\right) 
	&=& \max\left\{ \frac{e}{f} \mid \mathrm{level}\left(\frac{e}{f}\right) < n-1 \text{ and } \frac{e}{f}<\frac{a}{b} \right\};\\
	\mathrm{Right}\left(\frac{a}{b}\right) 
	&=& \min\left\{ \frac{e}{f} \mid \mathrm{level}\left(\frac{e}{f}\right) < n-1 \text{ and } \frac{e}{f} > \frac{a}{b} \right\}.	
\end{eqnarray*}
The left and right children of $\frac{a}{b}$ are respectively $\frac{a}{b} \oplus \mathrm{Left}\left( \frac{a}{b} \right)$ and $\frac{a}{b} \oplus \mathrm{Right}\left( \frac{a}{b} \right)$.
\end{enumerate}
\end{definition}

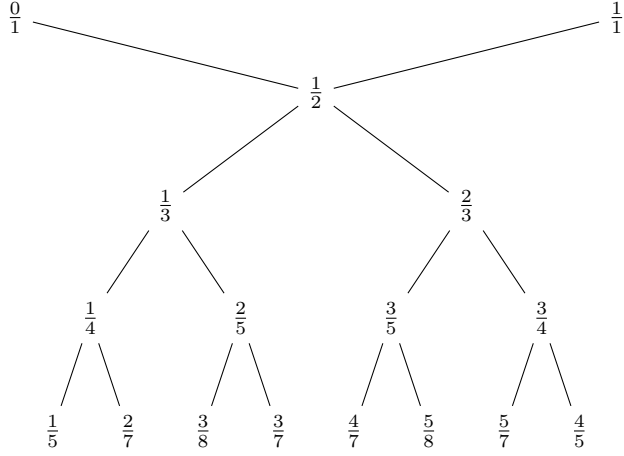
\begin{figure}[h!tb]
    \centering
\begin{tikzpicture}
\node(a) at (-4,1) {$\frac01$};
\node(b) at (4,1) {$\frac11$};
[-,thick]
\node(c) at (0,0) {$\frac12$}
  [sibling distance=4cm]
  child {node {$\frac13$}
   [sibling distance=2cm]
   child {node {$\frac14$}
    [sibling distance=1cm]
    child {node {$\frac15$}}
    child {node {$\frac27$}}
  }
   child {node {$\frac25$}
    [sibling distance=1cm] 
    child {node {$\frac38$}}
    child {node {$\frac37$}}
   }
  }
  child {node {$\frac23$}
   [sibling distance=2cm]
   child {node {$\frac35$}
    [sibling distance=1cm]
    child {node {$\frac47$}}
    child {node {$\frac58$}}
  }
   child {node {$\frac34$}
    [sibling distance=1cm]
    child {node {$\frac57$}}
    child {node {$\frac45$}}
   }
  };
\draw [-, >=latex] (a) to node {} (c);
\draw [-, >=latex] (b) to node {} (c);
\end{tikzpicture}
    \caption{The first levels of the Farey tree.}
    \label{fig:farey}
\end{figure}
Reading the denominators in Figure~\ref{fig:farey} level-by-level, then from left to right, i.e., conducting a breadth-first traversal of the tree, we obtain the first few terms of the sequence $(S(n))_{n\ge 0}$ if we drop the second denominator $1$. 
If a branch to the left (resp., right) corresponds to $0$ (resp., $1$), then with every node $\frac{a}{b}$ (except $\frac11$) is associated a unique path from $\frac01$ to $\frac{a}{b}$ of label $u$. If $n=\val_2(u)$, then we will show that $b=S(n)$. For instance, $\frac37$ corresponds to the path of label $1011 = \rep_2(11)$ and $S(11)=7$. Conversely, given an integer $n\ge 0$, we let $D(n)$ be the denominator of the node reached from $\frac01$ using the path of label $\rep_2(n)$. The sequence $(D(n))_{n\ge 0}$ is indexed by {\tt A007306} in \cite{EOIS}.

We show that the sequence $(D(n))_{n\ge 0}$ satisfies $D(n) = S(n)$ for all $n$.
From the definition of the tree, for all $\ell\ge 1$, $u\in\{0,1\}^*$, it directly follows that
$$
	D(\val_2(u10^\ell))=D(\val_2(u10^{\ell-1}))+D(\val_2(u))
$$
and
$$
	D(\val_2(u01^\ell))=D(\val_2(u01^{\ell-1}))+D(\val_2(u)).
$$

Otherwise stated, if $n = 2^k +r$ with $0 \leq r < 2^k$, we have
\[
	D(n) =
	\begin{cases}
		D(n/2) + D((n-2^\ell)/2^{\ell+1}), 
		& \text{if } \rep_2(r) \in \{0,1\}^* 10^\ell; \\
		D((n-1)/2) + D((n-2^\ell+1)/2^{\ell+1}), 
		& \text{if } \rep_2(r) \in \{0,1\}^* 01^\ell. \\
	\end{cases}
\]
We show by induction that the sequence $(S(n))_{n \geq 0}$ satisfies the same formulas, i.e., 
\[
	S(n) =
	\begin{cases}
		S(n/2) + S((n-2^\ell)/2^{\ell+1}), 
		& \text{if } \rep_2(r) \in \{0,1\}^* 10^\ell; \\
		S((n-1)/2) + S((n-2^\ell+1)/2^{\ell+1}), 
		& \text{if } \rep_2(r) \in \{0,1\}^* 01^\ell. \\
	\end{cases}
\]
Note that a direct inspection shows that $S(n) = D(n)$, $0\le n\le 3$.
For the induction step, we write $n = 2^k +r$ with $k\ge 2$ and $0 \leq r < 2^k$. Using~\eqref{eq:rec}, we then consider four cases, depending on whether $0 \leq r < 2^{k-1}$ or $2^{k-1} \leq r < 2^k$ and whether $\rep_2(r)$ has a suffix consisting of zeroes or of ones.
We only give the proof for the case $0 \leq r < 2^{k-1}$ with $\rep_2(r) \in \{0,1\}^*10^\ell$, $\ell \geq 1$, the other ones being similar.
We have
\[
\begin{array}{rclr}
	S(n)
	&=& S(2^{k-1}+r) + S(r) 
	& \text{(by \eqref{eq:rec})} \\
	&=& S((2^{k-1}+r)/2) + S((2^{k-1}+r-2^\ell)/2^{\ell+1}) \\
	& & + S(r/2) + S((r-2^\ell)/2^{\ell+1}) 
	& \text{(by induction hypothesis)}  \\
	&=& S(2^{k-1}+r/2) + S(2^{k-\ell-1}+r/2^{\ell+1}-1/2) 
	& \text{(by \eqref{eq:rec})} 		\\
	&=& S(n/2) + S((n-2^\ell)/2^{\ell+1}).
\end{array}
\]

\begin{remark}
    It is a folklore fact that the sum of the denominators of the elements of the $k$th level (with $k\ge 1$) in the Farey tree is equal to $2\cdot 3^{k-1}$ and is $1$ if we only consider the denominator $D(0)$. Or similarly, that the sum of the denominators of the levels $1$ to $k$ is equal to $3^k-1$ (or $3^k$ if we add the denominator $D(0)$ on the $0$th level). Using \eqref{eq:defS}, we observe that $\sum_{i=0}^{2^n-1} S(i)$ is the number of pairs of words in $L_n$ having a positive binomial coefficient. In \cite{LRS}, we showed that this number is equal to $3^n$, giving an alternative proof of the above mentioned fact about the Farey tree.
\end{remark}

The {\em Stern--Brocot sequence} $(SB(n))_{n\ge 0}$ is defined by $SB(0)=0, SB(1)=1$ and, for all $n\ge 1$, 
$$SB(2n)=SB(n), \quad SB(2n+1)=SB(n)+SB(n+1).$$  
It is well known that $D(n)=SB(2n+1)$. Hence the sequence $(S(n))_{n\ge 0}$ satisfies 
\begin{equation}\label{eq:SB}
S(n)=SB(2n+1) \quad \text{for all } n\ge 0.
\end{equation} 

\begin{remark}
In \cite{CW}, it is shown that the $n$th Stern--Brocot value $SB(n)$ is equal to the number of times words of the form $v\in 1(01)^*$ occur as scattered subwords of the binary expansion of $n$. This result is different from the one obtained here because the form of the subwords is fixed.
In \cite{CoSha}, the authors give a way to build the sequence $(SB(n))_{n\ge 0}$ using occurrences of words occurring in the base-$2$ expansions of positive integers.
\end{remark}

\section{The sequence $(S(n))_{n\ge 0}$ is $2$-regular}\label{sec:2reg}

The $k$-regularity of a sequence provides interesting structural information about it. 
For instance, we get matrices to compute its $n$-th term in a number of steps proportional to $\log_k(n)$.
First we recall the notions of $k$-kernel and $k$-regular sequence. Since $(SB(n))_{n\ge 0}$ is $2$-regular \cite{AS99}, one can immediately deduce that the sequence $(S(n))_{n\ge 0}$ is $2$-regular from~\eqref{eq:SB}. Nevertheless, we provide an alternative proof because we have in mind extensions to other numeration systems; see Section~\ref{sec:4} and Section~\ref{sec:fibo}.

\begin{definition}\label{def:ker}
Let $k\geq 2$ be an integer. The {\em $k$-kernel} of a sequence $s=(s(n))_{n\geq 0}$ is the set
$$\mathcal{K}_k(s) = \{ (s(k^in + j))_{n\ge 0} | \; i\ge 0 \text{ and } 0\le j < k^i \}.
$$
\end{definition}

One characterization of $k$-automatic sequences is that their $k$-kernels are finite; see \cite{E74} or \cite{AS03}. For instance, the $2$-kernel $\mathcal{K}_2(\mathbf{t})$ of the Thue--Morse word $\mathbf{t}=01101001\cdots$ contains exactly two elements, namely $\mathbf{t}$ and $\underline{\mathbf{t}}$. Unbounded sequences (i.e., taking infinitely many integer values) are also of interest but clearly, their $k$-kernels are infinite. 
To handle such sequences, one introduces the following definition in the sense of Allouche and Shallit \cite{AS99}.
Also see \cite{berreu}.

\begin{definition}\label{def:k-reg}
Let $k\ge 2$ be an integer. A sequence $s=(s(n))_{n\geq 0}$ is \emph{$k$-regular} if $\left\langle \mathcal{K}_k(s)\right\rangle$ is a finitely-generated $\mathbb{Z}$-module, i.e., there exists a finite number of sequences $(t_1(n))_{n\geq 0}, \ldots, (t_\ell(n))_{n\geq 0}$ such that every sequence in the $\mathbb{Z}$-module generated by the $k$-kernel $\mathcal{K}_k(s)$ is a $\mathbb{Z}$-linear combination of the $t_r$'s. Otherwise stated, for all $i\ge 0$ and for all $j\in\{0,\ldots, k^i - 1\}$, there
exist integers $c_1, \ldots, c_\ell$ such that
$$\forall n \ge 0, \quad s(k^i n + j) = \sum_{r=1}^\ell 
c_r\, t_r (n).
$$
\end{definition}

As an easy example of $2$-regular (unbounded) sequence, consider the Stern--Brocot sequence. The $\mathbb{Z}$-module generated by the $2$-kernel of $(SB(n))_{n\ge 0}$ is simply generated by the sequence itself and the shifted sequence $(SB(n+1))_{n\ge 0}$.

\begin{theorem}\label{the:rel}
The sequence $(S(n))_{n \ge 0}$ satisfies, for all $n \geq 0$,
\begin{eqnarray*}
	S(2n+1) 	&=& 3\, S(n)-S(2n)			\\
	S(4n) 	&=& 2\, S(2n)-S(n)		\\
	S(4n+2) &=& 4\, S(n)-S(2n).
\end{eqnarray*}
In particular, $(S(n))_{n\ge 0}$ is $2$-regular.
\end{theorem}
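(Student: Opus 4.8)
The plan is to derive the three identities directly from the recurrence in Proposition~\ref{pro:rec}, treating each as a statement about $S(2^\ell+r)$ for suitable $\ell$ and $r$, and then to invoke the elementary characterization of $2$-regularity via the $2$-kernel. Concretely, writing $n = 2^{\ell-1}+s$ or similar, each of the three target relations should reduce to an instance (or a short combination of instances) of the two-branch formula in \eqref{eq:rec}, together with the palindromic identity from Lemma~\ref{lem:palindrome}.

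First I would prove $S(2n+1) = 3S(n) - S(2n)$. I expect this to be the least immediate of the three: one must split on whether $n$ is even or odd (equivalently, whether $\rep_2(n)$ ends in $0$ or $1$) so that \eqref{eq:rec} can be applied to $2n+1$ with the right branch. In each subcase, the ``up branch'' of \eqref{eq:rec} expresses $S(2n+1)$ in terms of $S(n)$ and $S$ of a smaller argument, and similarly $S(2n)$ in terms of $S$ of smaller arguments; combining and cancelling should give the stated linear relation. I would expect to also need an induction on the size of $n$ here, with the base cases $n = 0, 1$ (using $S(0)=1$, $S(1)=2$, $S(2)=3$, $S(3)=3$) checked by hand.

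Next I would handle $S(4n) = 2S(2n) - S(n)$: since $4n = 2^2 \cdot n$, for $n$ in the range $2^{\ell-2} \le n < 2^{\ell-1}$ we have $4n = 2^\ell + (4n - 2^\ell)$ with $r = 4n-2^\ell < 2^{\ell-1}$, so the first branch of \eqref{eq:rec} applies and gives $S(4n) = S(2^{\ell-1} + (4n-2^\ell)) + S(4n - 2^\ell) = S(2n) + S(4n-2^\ell)$; applying \eqref{eq:rec} once more to $S(2n) = S(2^{\ell-1} + (2n - 2^{\ell-1}))$, the remainder term $4n - 2^\ell = 2(2n - 2^{\ell-1})$ can be identified and the identity $S(4n) = 2S(2n) - S(n)$ falls out. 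The relation $S(4n+2) = 4S(n) - S(2n)$ I would obtain by the same mechanism applied to $4n+2 = 2^\ell + (4n+2-2^\ell)$, once again landing in the first branch for appropriate $\ell$; alternatively one can substitute $2n+1$ for the argument and combine with the already-established first identity, which may shorten the bookkeeping.

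Finally, for the $2$-regularity conclusion I would argue that the sequences $(S(n))_{n\ge0}$ and $(S(2n))_{n\ge0}$ generate a $\mathbb{Z}$-submodule of $\langle \mathcal{K}_2(S)\rangle$ that is stable under the kernel operations $s \mapsto (s(2n))$ and $s \mapsto (s(2n+1))$: indeed the three identities express $S(2n+1)$, $S(4n)$, $S(4n+2)$ — and hence $S(2(2n)) $, $S(2(2n)+1)=S(4n+1)$ via the first identity applied at $2n$, etc. — as $\mathbb{Z}$-linear combinations of $S(n)$ and $S(2n)$, so by induction on $i$ every sequence $(S(2^i n + j))_{n\ge0}$ lies in the $\mathbb{Z}$-span of these two generators. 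By Definition~\ref{def:k-reg} this is exactly $2$-regularity. The only real obstacle I anticipate is the case analysis for the first identity (even vs.\ odd $n$, plus the induction), and making sure the edge cases where a block of $\rep_2(n)$ has length $1$ do not break the application of \eqref{eq:rec}; everything else is routine algebra.
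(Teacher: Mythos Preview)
Your overall strategy—derive the three identities from \eqref{eq:rec} and then read off $2$-regularity from the two generators $(S(n))_{n\ge0}$ and $(S(2n))_{n\ge0}$—matches the paper's. However, two of the concrete steps you sketch do not work.

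For $S(2n+1)=3S(n)-S(2n)$, splitting on the parity of $n$ is the wrong case distinction. The recurrence \eqref{eq:rec} branches according to whether, in $m=2^\ell+r$, one has $r<2^{\ell-1}$ or $r\ge 2^{\ell-1}$: this is governed by the \emph{second-most-significant} bit of $m$, not the least significant one. Writing $n=2^\ell+r$, the numbers $2n=2^{\ell+1}+2r$ and $2n+1=2^{\ell+1}+2r+1$ always land in the \emph{same} branch of \eqref{eq:rec}, independently of the parity of $n$. The paper accordingly splits on $r<2^{\ell-1}$ versus $r\ge 2^{\ell-1}$ and proves $S(2n)+S(2n+1)=3S(n)$ by induction on $n$: in case (a) both terms expand via the first branch and the induction hypothesis applies at $2^{\ell-1}+r$ and at $r$; in case (b) the palindromic branch sends $2n,\,2n+1$ to $2^{\ell+2}-2r-1,\,2^{\ell+2}-2r-2$ and the induction hypothesis applies at $2^{\ell+1}-r-1$.

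Your direct (non-inductive) argument for $S(4n)=2S(2n)-S(n)$ breaks down twice. With $n\in[2^{\ell-2},2^{\ell-1})$ and $r=4n-2^\ell$, you assert $r<2^{\ell-1}$, but in fact $r$ ranges over $[0,2^\ell)$ and exceeds $2^{\ell-1}$ as soon as $n\ge 3\cdot 2^{\ell-3}$. Even where the first branch does apply, you claim $2^{\ell-1}+r=2n$, whereas $2^{\ell-1}+r=4n-2^{\ell-1}$, which equals $2n$ only when $n=2^{\ell-2}$. The underlying issue is structural: a single application of \eqref{eq:rec} strips one \emph{leading} bit, so it cannot by itself relate $S(4n)$, $S(2n)$, $S(n)$, which share leading bits but differ in trailing bits. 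The paper handles this, and likewise $S(4n+2)=4S(n)-S(2n)$, by the same induction on $n$ with the two-case split on $r$; in the palindromic case it also feeds in the already-established first relation. Your concluding $2$-regularity paragraph is correct and agrees with the paper.
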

\begin{proof}
We first prove $S(2n+1)+S(2n)=3S(n)$. We proceed by induction on $n$. 
It can be checked by hand that the result holds true for $n \leq 1$. 
Thus consider $n > 1$ and suppose that the relation holds true for all $m < n$. We write $n = 2^\ell +r$ with $\ell \geq 1$ and $0 \leq r < 2^\ell$. We divide the proof in two according to the position of $r$ inside the interval $[0,2^\ell)$.
\begin{enumerate}
\item[(a)]
If $0 \leq r < 2^{\ell-1}$, we get
\[
\begin{array}{rclr}
	S(2n+1) + S(2n)
	&=& S(2^{\ell+1}+2r+1) + S(2^{\ell+1}+2r) & \\
	&=& S(2^{\ell}+2r+1) + S(2r+1) + S(2^{\ell}+2r) + S(2r) 
	& \text{(by \eqref{eq:rec})}	\\
	&=& 3 S(2^{\ell-1}+r) + 3 S(r)  
	&\text{(by induction hypothesis)}\\
	&=& 3 S(n)  
	&\text{(by \eqref{eq:rec}).}
\end{array}
\]
\item[(b)]
If $2^{\ell-1} \le r < 2^\ell$, we get
$$
\begin{array}{rclr}
	S(2n+1) + S(2n)
	&=& S(2^{\ell+1}+2r+1) + S(2^{\ell+1}+2r) & \\
	&=& S(2^{\ell+2}-2r-2) + S(2^{\ell+2}-2r-1) 
	&\text{(by \eqref{eq:rec})} \\
	&=& 3S(2^{\ell+1}-r-1)
	&\text{(by induction hypothesis)}\\
	&=& 3S(n)
	&\text{(by \eqref{eq:rec}).} 
\end{array}
$$
\end{enumerate}
Now we prove the remaining two relations. We again proceed by induction on $n$. It can be checked by hand that the result holds true for $n \leq 1$. Thus consider $n > 1$ and suppose that the two relations hold true for all $m < n$.
\begin{enumerate}
\item
Let us prove $S(4n) = 2 S(2n)-S(n)$.
We write $n = 2^\ell +r$ with $\ell \geq 1$ and $0 \leq r < 2^\ell$.
\begin{enumerate}
\item
If $0 \leq r < 2^{\ell-1}$, we get
\[
\begin{array}{rclr}
	S(4n) 
	&=& S(2^{\ell+2}+4r) & \\
	&=& S(2^{\ell+1}+4r) + S(4r) 
	& \text{(by \eqref{eq:rec})}	\\
	&=& 2 S(2^{\ell}+2r) - S(2^{\ell-1} + r) + 2 S(2r) - S(r) 
	&\text{(by induction hypothesis)}\\
	&=& 2 S(2n) - S(n) 
	&\text{(by \eqref{eq:rec}).}
\end{array}
\]
\item
If $2^{\ell-1} \le r < 2^\ell$, we get
$$
\begin{array}{rclr}
	S(4n) 
	&=& S(2^{\ell+2}+4r) & \\
	&=& S(2^{\ell+3}-4r-1) 
	&\text{(by \eqref{eq:rec})} \\
	&=& 3S(2^{\ell+2}-2r-1) - S(2^{\ell+3}-4r-2)
	&\text{(using the first relation)}\\
	&=& 3S(2n) - S(2^{\ell+3}-4(r+1)+2)
	&\text{(by \eqref{eq:rec})}\\
	&=& 3S(2n) - 4S(2^{\ell+1}-r-1) + S(2^{\ell+2}-2r-2) 
	&\text{(by induction hypothesis)}\\
	&=& 3S(2n) - 4S(n) + S(2n+1)
	&\text{(by \eqref{eq:rec})}\\
	&=& 2S(2n) - S(n)
	&\text{(using the first relation).}\\
\end{array}
$$
\end{enumerate}
\item
Finally, let us prove $S(4n+2) = 4 S(n)-S(2n)$.
Let us write $n = 2^\ell + r$ with $\ell \geq 1$ and $0 \leq r < 2^\ell$. 
\begin{enumerate}
\item
If $0 \leq r < 2^{\ell-1}$, we get
\[
\begin{array}{rclr}
	S(4n+2) 
	&=& S(2^{\ell+2}+4r+2) & \\
	&=& S(2^{\ell+1}+4r+2) + S(4r+2) 
	& \text{(by \eqref{eq:rec})}	\\
	&=& 4 S(2^{\ell-1}+r) - S(2^{\ell} + 2r) + 4 S(r) - S(2r) 
	&\text{(by induction hypothesis)}\\
	&=& 4 S(n) - S(2n) 
	&\text{(by \eqref{eq:rec})}.
\end{array}
\]
\item
If $2^{\ell-1} \leq r < 2^\ell$, we get
\[
\begin{array}{rclr}
	S(4n+2) 
	&=& S(2^{\ell+2}+4r+2) & \\
	&=& S(2^{\ell+3}-4r-3) 
	&\text{(by \eqref{eq:rec})} \\
	&=& 3S(2^{\ell+2}-2r-2) - S(2^{\ell+3}-4r-4)
	&\text{(using the first relation)}\\
	&=& 3S(2n+1) - S(2^{\ell+3}-4(r+1))
	&\text{(by \eqref{eq:rec})}\\
	&=& 3S(2n+1) - 2S(2^{\ell+2}-2r-2) + S(2^{\ell+1}-r-1) 
	&\text{(by induction hypothesis)}\\
	&=& 3S(2n+1) - 2S(2n+1) + S(n)
	&\text{(by \eqref{eq:rec})}\\
	&=& 4S(n) - S(2n)
	&\text{(using the first relation).}\\
\end{array}
\]
\end{enumerate}
\end{enumerate}
To finish the proof, observe that the $\mathbb{Z}$-module $\left\langle \mathcal{K}_2(S)\right\rangle$ is finitely-generated: a choice of generators is $(S(n))_{n\ge 0}$ and $(S(2n))_{n\ge 0}$.  
\end{proof}

If a sequence is $k$-regular, then its $n$th term can be obtained by multiplying some matrices and the length of this product is proportional to $\log_k(n)$. In our situation, we will consider products of square matrices of size $2$, \cite{AS99}, \cite[Theorem 16.1.3]{AS03}.
Observe that due to Equation~\eqref{eq:SB}, other matrices can be derived from the 2-regularity of $(SB(n))_{n \geq 0}$.

\begin{corollary}\label{cor:mat_2}
For all $n\ge 0$, let 
\begin{equation}
 \label{eq:defV(n)}
V(n) = \left(
\begin{array}{c}
 S(n) \\
 S(2n) 
\end{array}
\right).
\end{equation}
Consider the matrix-valued morphism $\mu : \{0,1\}^* \to \mathbb{Z}^2_2$ defined by
$$\mu(0)=\left(\begin{array}{cc}
 0 & 1 \\
 -1 & 2\\
\end{array}\right),\quad
\mu(1)=\left(
\begin{array}{cc}
 3 & -1 \\
 4 & -1 \\
\end{array}
\right).$$
Then $V(2n+r)=\mu(r)V(n)$ for all $r\in\{0,1\}$ and $n\ge 0$. Consequently, if $\rep_2(n)=c_r\cdots c_0$, then 
$$
S(n) =
\begin{pmatrix}
    1&0\\
\end{pmatrix}
\, \mu(c_0)\cdots \mu(c_r)\, V(0).
$$
\end{corollary}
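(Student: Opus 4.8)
The plan is to derive Corollary~\ref{cor:mat_2} directly from the three relations established in Theorem~\ref{the:rel}, treating everything as a routine linear-algebra unwinding. First I would verify the single vector identity $V(2n+r)=\mu(r)V(n)$ for $r\in\{0,1\}$. For $r=0$ this amounts to checking that the two components of $\mu(0)V(n)$, namely $S(2n)$ and $-S(n)+2S(2n)$, equal $S(2n)$ and $S(4n)$; the first is trivial and the second is exactly the relation $S(4n)=2S(2n)-S(n)$ from Theorem~\ref{the:rel}. For $r=1$ one checks that the components of $\mu(1)V(n)$, namely $3S(n)-S(2n)$ and $4S(n)-S(2n)$, equal $S(2n+1)$ and $S(4n+2)$; these are precisely the relations $S(2n+1)=3S(n)-S(2n)$ and $S(4n+2)=4S(n)-S(2n)$, again from Theorem~\ref{the:rel}. (Note that the bottom entry of $V(2n+1)$ is $S(2(2n+1))=S(4n+2)$, which is why the third relation of the theorem is exactly what is needed here.)

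Having established $V(2n+r)=\mu(r)V(n)$, I would then iterate it along the base-$2$ expansion of $n$. Writing $\rep_2(n)=c_r c_{r-1}\cdots c_0$ with $c_r=1$, so that $n=\sum_{i=0}^{r} c_i 2^i$, a straightforward induction on $r$ using the recursion $n=2\lfloor n/2\rfloor + c_0$ gives
\[
V(n)=\mu(c_0)\,\mu(c_1)\cdots \mu(c_r)\,V(0).
\]
Concretely: if $\rep_2(n)=c_r\cdots c_1 c_0$ then $\rep_2(\lfloor n/2\rfloor)=c_r\cdots c_1$, and $V(n)=\mu(c_0)V(\lfloor n/2\rfloor)$; the induction hypothesis applied to $\lfloor n/2\rfloor$ then yields the product formula. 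The base case is $n=0$, where $V(0)=\mu(\varepsilon)V(0)=V(0)$ with the empty product equal to the identity, which is consistent (and one checks $V(0)=(S(0),S(0))^{\mathsf T}=(1,1)^{\mathsf T}$, although the explicit value of $V(0)$ is not even needed for the identity). Finally, extracting the first component of $V(n)$ via left-multiplication by the row vector $\begin{pmatrix}1&0\end{pmatrix}$ gives $S(n)=\begin{pmatrix}1&0\end{pmatrix}\mu(c_0)\cdots\mu(c_r)V(0)$, as claimed.

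There is essentially no hard part here: the entire content of the corollary is packaged in Theorem~\ref{the:rel}, and what remains is bookkeeping. The only point requiring a moment's care is the bottom coordinate of $V(2n+1)$ — one must remember that the second entry of $V(m)$ is $S(2m)$, so for $m=2n+1$ it is $S(4n+2)$ and not $S(2n+2)$; this is exactly why Theorem~\ref{the:rel} is stated in terms of $S(4n)$ and $S(4n+2)$ rather than an arbitrary shift, and it is the one place where a careless reader could slip. Everything else is the standard observation that a first-order vector recurrence indexed by digits unrolls into a matrix product whose length equals the number of digits of $n$, i.e.\ is proportional to $\log_2 n$.
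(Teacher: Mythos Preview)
Your proposal is correct and follows essentially the same approach as the paper: both first verify $V(2n+r)=\mu(r)V(n)$ componentwise from the three relations of Theorem~\ref{the:rel}, and then unroll this recursion along the base-$2$ digits of $n$ by induction. The paper phrases the induction via the intermediate identity $V(2^\ell m+r)=\mu(r_0\cdots r_{\ell-1})V(m)$ before specializing with $m=1$ and using $V(1)=\mu(1)V(0)$, whereas you induct directly on $\lfloor n/2\rfloor$, but this is only a cosmetic difference in organization.
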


\begin{proof}
Thanks to Theorem~\ref{the:rel}, we directly have
$$
V(2n) = 
\left(
\begin{array}{c}
 S(2n) \\
 S(4n) \\
 \end{array}
\right)
=\left(\begin{array}{cc}
 0 & 1 \\
 -1 & 2\\
\end{array}\right)
\left(
\begin{array}{c}
 S(n) \\
 S(2n) \\
\end{array}
\right)$$
and
$$V(2n+1)= 
\left(
\begin{array}{c}
 S(2n+1) \\
 S(4n+2) \\
\end{array}
\right)
= \left(
\begin{array}{cc}
 3 & -1 \\
 4 & -1 \\
\end{array}
\right) \left(
\begin{array}{c}
 S(n) \\
 S(2n) \\
\end{array}
\right)$$
for all $n\ge 0$. Now let $r=\sum_{i=0}^{\ell-1} r_i\, 2^i$. 
Then the word $r_{\ell-1}\cdots r_0$ is the expression of $r$ in base $2$ possibly with leading zeroes. 
By induction, we can show that
\begin{equation}\label{eq:mu}
V(2^\ell m + r) = \mu(r_0 \cdots r_{\ell-1}) V(m)
\end{equation}
for all $m\in\mathbb{N}$. 
Now let $n\ge 2$. 
Then there exist $\ell\ge 1$ and $r\in\{0, \ldots, 2^\ell -1\}$ such that $n=2^\ell + r$. Let $r_{\ell-1}\cdots r_0$ be the expression of $r$ in base $2$ possibly with leading zeroes.  
Using \eqref{eq:mu} and the fact that $V(1)=\mu(1)V(0)$, we get
$$
V(n) = \mu(r_0 \cdots r_{\ell-1}) V(1) = \mu(r_0 \cdots r_{\ell-1}1) V(0) = \mu((\text{rep}_2(n))^R) V(0)
$$
where $u^R$ is the reversal of the word $u$. 
Note that the previous equality also holds for $n\in\{0,1\}$. In particular, we have the following equality
$$
S(n) = \begin{pmatrix}
    1&0\\
\end{pmatrix}
\, \mu((\text{rep}_2(n))^R V(0)
$$
for all $n\in\mathbb{N}$.
\end{proof}

\section{The sequence $(S(n))_{n\ge 0}$ is not 2-synchronized}\label{sec:sync}

The class of $k$-synchronized sequences is an intermediate between the classes of $k$-automatic sequences and $k$-regular sequences. Every $k$-synchronized sequence is $k$-regular but the converse does not hold. Moreover, every $k$-synchronized sequence taking finitely many values is $k$-automatic. Roughly, a sequence $(s(n))_{n \ge 0}$ is $k$-synchronized if there exists a finite automaton accepting the pairs of base-$k$ expansions of $n$ and $s(n)$, as stated in Definition~\ref{def:k-sync}. These sequences were first introduced in \cite{CM}. As an example, it is proved in \cite{goc} that if an infinite word $\mathbf{w}$ is $k$-automatic, then its factor complexity function is $k$-synchronized. 

\begin{definition}\label{def:k-sync}
Let $k \geq 2$ be an integer. The map $\rep_k$ is extended to $\mathbb{N}\times\mathbb{N}$ as follows. For all $m,n\in\mathbb{N}$, 
$$\rep_k(m,n)=\left(0^{M-|\rep_k(m)|}\rep_k(m),0^{M-|\rep_k(n)|}\rep_k(n)\right)$$
where $M=\max\{|\rep_k(m)|,|\rep_k(n)|\}$. The idea is that the shortest word is padded with leading zeroes to get two words of the same length.

A sequence $(s(n))_{n \ge 0}$ of integers is said to be {\em $k$-synchronized} if the language $\{\rep_k(n,s(n)) \mid n \in \mathbb{N}\}$ is accepted by some finite automaton reading pairs of digits.
\end{definition}

We present two independent proofs of the next result.

\begin{prop}
The sequence $(S(n))_{n \ge 0}$ is not $2$-synchronized.
\end{prop}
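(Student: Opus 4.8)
The plan is to show that $(S(n))_{n\ge0}$ grows too fast to be $2$-synchronized. The key structural fact is that a $k$-synchronized sequence $(s(n))_{n\ge0}$ must satisfy $s(n) = O(n)$: if $\rep_k(n,s(n))$ is accepted by a finite automaton reading pairs of digits of equal-length words, then $|\rep_k(s(n))| \le |\rep_k(n)|$, so $s(n) < k^{|\rep_k(n)|} \le k\cdot n$ for $n\ge 1$. Hence it suffices to exhibit a subsequence of $(S(n))$ that is not $O(n)$, i.e.\ for which $S(n)/n$ is unbounded.

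First I would pin down a good subsequence. The recurrence \eqref{eq:rec} with $r=0$ gives $S(2^\ell) = S(2^{\ell-1}) + S(0) = S(2^{\ell-1})+1$, so $S(2^\ell)=\ell+1$, which is useless (that is the slowest-growing branch, corresponding to the word $10^{\ell-1}$ and the leftmost path in the Farey tree to $1/\ell$). Instead, the relevant observation is that along the Farey tree the denominators can be as large as the Fibonacci numbers: the path alternating left–right reaches the fraction with Fibonacci numerator and denominator. Concretely, using \eqref{eq:SB}, $S(n) = SB(2n+1)$, and it is classical that the Stern--Brocot sequence attains Fibonacci values: $SB(a_k) = F_k$ where $a_k$ is obtained by the binary string $1010\cdots$. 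So I would choose $n_k$ so that $\rep_2(2n_k+1)$ is the alternating string $(10)^k1$ (or the analogous string making $2n_k+1 = a_k$); then $S(n_k) = SB(2n_k+1)$ is the $k$th Fibonacci number, while $n_k \approx 2^{2k}$ roughly, giving $S(n_k) \approx \varphi^{k}$ versus $n_k \approx 4^k$ — which is actually bounded. So I must be more careful: I want $S(n)$ large relative to $n$, so I need $2n+1$ to have few binary digits but large Stern--Brocot value, which is impossible since $SB(m) < m$.

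The correct route is therefore the opposite: use the recurrence to find $n$ of a given bit-length $\ell$ for which $S(n)$ is as large as possible, and show this maximum grows faster than $2^\ell$. From \eqref{eq:rec}, restricting to the branch $0\le r<2^{\ell-1}$, we get $S(2^\ell+r) = S(2^{\ell-1}+r)+S(r) \ge S(2^{\ell-1}+r)$, and more usefully, if we track the maximum $m_\ell := \max_{0\le n<2^\ell} S(n)$, the relation $S(2^\ell+r)=S(2^{\ell-1}+r)+S(r)$ together with the palindrome symmetry shows $m_{\ell+1} \ge m_\ell + m_{\ell-1}$ (choosing $r$ so that $S(2^{\ell-1}+r)$ and $S(r)$ are simultaneously near-maximal — which works because the maximizing $r$ for the $\ell$-bit window, reflected, gives a near-maximal contribution). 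Hence $m_\ell \ge F_\ell$, the $\ell$th Fibonacci number, so $m_\ell \ge c\varphi^\ell$. But the maximizing index $n$ has $n < 2^\ell$, i.e.\ $\ell > \log_2 n$, so there exist infinitely many $n$ with $S(n) \ge c\varphi^{\log_2 n} = c\, n^{\log_2\varphi}$; since $\log_2\varphi > 1$... no, $\log_2\varphi \approx 0.694 < 1$, so this is still sublinear. I must again correct course.

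The genuine reason $(S(n))$ is not $2$-synchronized is more subtle and I would take the following approach. Suppose for contradiction that $(S(n))_{n\ge0}$ is $2$-synchronized. Then (by the easy direction recalled above) $(S(n))$ is $2$-regular and $S(n)=O(n)$, so the ratio $S(n)/n$ is bounded; moreover, for synchronized sequences the set $\{n : S(n) = t\}$ is $2$-recognizable for each $t$, and more strongly the ``graph'' relation is recognizable. The standard tool here is: a $2$-synchronized sequence $(s(n))$ with $s(n)=O(n)$ has the property that $\limsup s(n)/n$ and $\liminf s(n)/n$ are rational, and in fact the sequence of ratios has only finitely many ``limit points along recognizable subsequences'' — one uses a pumping/automaton argument on the synchronizing automaton. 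The plan is: (i) recall that if $(S(n))$ were $2$-synchronized then so would be the function $n\mapsto S(n)$ and hence, reading the automaton, the values $S(n)$ for $n$ in any fixed residue-and-prefix class would be given by an eventually affine function of $n$ — but \eqref{eq:rec} forces genuinely non-affine behaviour: $S(2^\ell+r)$ on the block $[2^\ell,2^\ell+2^{\ell-1})$ equals $S(2^{\ell-1}+r)+S(r)$, a "self-convolution" that cannot be reproduced by finitely many affine pieces as $\ell\to\infty$ because $S(r)$ itself already takes arbitrarily many values on $[0,2^{\ell-1})$. I would formalize this via the denominators in the Farey/Stern--Brocot tree: the set of pairs $(n, S(n))$ projects onto the set of pairs $(n, \text{denominator of the } n\text{th Farey fraction})$, and I would invoke (or reprove) the known fact that this latter relation is not synchronized — e.g.\ because $\{(p,q): p/q \text{ is a Farey fraction in lowest terms at a tree node}\}$ being synchronized would make the continued-fraction map automatic, contradicting that continued fraction expansions of $2$-automatic reals need not be automatic (Shallit's results).

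The hard part will be making this last step rigorous without a long detour: the cleanest concrete obstruction I expect to use is a direct pumping argument. Assuming a synchronizing automaton $\mathcal A$ with state set $Q$, I would feed it the pair $(\rep_2(2^\ell+r), \rep_2(S(2^\ell+r)))$ for cleverly chosen $r$, pump a loop in the high-order digits, and derive that $S$ restricted to an arithmetic-like progression is eventually affine with bounded slope; then I would exhibit, using \eqref{eq:rec} iterated (or using $S(n)=SB(2n+1)$ and the fact that $SB$ along $\rep_2^{-1}$ of strings $1(0^a1^b)^k$ grows like products of $\begin{pmatrix}1&1\\0&1\end{pmatrix}$ and $\begin{pmatrix}1&0\\1&1\end{pmatrix}$, i.e.\ like $\binom{k(a+b)}{ka}$-type quantities) a subsequence where $S(n)$ is super-linear — which it \emph{is}: take $n$ with $\rep_2(2n+1) = 1(10)^{k}$-type but interleaved so that the associated product of transvections has an entry of size $\sim k!$ or $\sim\binom{2k}{k}$, hence $S(n)/n \to \infty$ after all, once one allows the bit-length of $n$ to be much larger than the "complexity" $k$ of the pattern. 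This final construction — choosing $n$ of bit-length $\ell$ encoding $k \ll \ell$ alternations but with multiplicities arranged so the Stern--Brocot value beats $2^{\ell}$ — is the technical crux, and it is exactly where I would spend the real effort; everything else (the $O(n)$ bound for synchronized sequences, the reduction) is routine.

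I should note that the excerpt announces \emph{two} independent proofs, so in practice I would present one proof via the super-linear-growth obstruction sketched above and a second, softer proof observing that $2$-synchronized sequences taking infinitely many values have, by a result on synchronized sequences, a \emph{rational} $\limsup s(n)/n$ together with a first-order definable structure incompatible with the Stern--Brocot recursion $S(2n+1)=3S(n)-S(2n)$ from Theorem~\ref{the:rel} — indeed that recursion, being genuinely quadratic-free but mixing $S(n)$ and $S(2n)$ with coefficient $3$, prevents $(S(n))$ from being "linearly" synchronized, which one can phrase as: if it were synchronized then $S(4n+2)+S(4n)-S(2n+1)-S(2n) = 4S(n)-S(2n)+2S(2n)-S(n)-3S(n) = 0$ would be automatically verifiable, but combining with unboundedness yields the same growth contradiction. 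The main obstacle, to restate it plainly, is proving $\limsup_{n} S(n)/n = \infty$; once that is in hand the non-synchronization is immediate from the $O(n)$ bound.
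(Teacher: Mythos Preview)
Your approach has a fundamental error of direction. You correctly observe that a $2$-synchronized sequence satisfies $s(n)=O(n)$, and then spend the entire proposal trying to show $\limsup_n S(n)/n=\infty$. But this is false: since $S(n)=SB(2n+1)$ and the maximum of $SB$ over integers with $\ell$ binary digits is the $\ell$th Fibonacci number, one has $S(n)=O(n^{\log_2\varphi})$ with $\log_2\varphi\approx 0.694<1$. So $S(n)=o(n)$; there is no super-linear subsequence, and your ``technical crux'' (arranging multiplicities so that the Stern--Brocot value beats $2^\ell$) is impossible. Your own computations kept telling you this --- every concrete attempt produced a sublinear bound --- and that was the signal that the strategy was inverted.

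The obstruction the paper uses is exactly the opposite one: an unbounded $2$-synchronized sequence must satisfy $f(n)\ge cn$ infinitely often (Lemma~\ref{lem:SchaSha}, due to Schaeffer--Shallit), and $S(n)=o(n)$ violates this. That is the paper's second proof. The paper's first proof is even more direct and, ironically, uses precisely the values you dismissed as ``useless'': from $S(2^\ell)=\ell+2$, the padded pair $\rep_2(2^\ell,\ell+2)=(10^\ell,\,0^{\ell+1-|\rep_2(\ell+2)|}\rep_2(\ell+2))$ contains a long block of $(0,0)$ symbols; pumping this block in a putative synchronizing automaton changes $2^\ell$ to $2^{\ell+j-i}$ while leaving the second component equal to $\ell+2$, contradicting $S(2^{\ell+j-i})=\ell+j-i+2$. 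The slow-growing branch is not useless --- it is the whole point.
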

\begin{proof}
Proceed by contradiction and suppose that there is a deterministic $k$-state automaton that accepts exactly the language $\{\rep_2(n,S(n)) \mid n \in \mathbb{N}\}$.
Note that for all $\ell \geq 0$, $S(2^\ell)=\ell+2$ using~\eqref{eq:defS}.
Consider an integer $\ell$ such that $\ell - \lceil \log_2(\ell+2)\rceil > k+1$.
Then we have $\rep_2(2^\ell,\ell+2) = (10^\ell,0^{k+2}u)$ for some word $u \in \{0,1\}^*$ of length $\ell - k -1$ and such that $\val_2(u)=\ell+2$.
Let $(q_0,q_1,\dots,q_{\ell+2})$ be the path in the automaton starting in the initial state $q_0$ and whose label is $\rep_2(2^\ell,\ell+2)$.
Since $\rep_2(2^\ell,\ell+2)$ is accepted by the automaton, the state $q_{\ell+2}$ is an accepting state.
As the automaton has $k$ states, there exist $1 \leq i < j \leq k+2$ such that $q_i = q_j$.
By choice of $\ell$, there is a path from $q_i$ to $q_j$ whose label is $(0^{j-i},0^{j-i})$.
Thus the pair of words $(10^{\ell+j-i},0^{k+2+j-i}u)$ is accepted by the automaton.
However, we have $S(\val_2(10^{\ell+j-i})) = \ell+j-i+2 \neq \val_2(0^{k+2+j-i}u) = \ell+2$, which is a contradiction.
\end{proof}

The fact that the sequence $(S(n))_{n\ge 0}$ is not $2$-synchronized also follows from the next result.

\begin{lemma}\cite[Lemma 4]{SchaSha}\label{lem:SchaSha}
If $(f(n))_{n\ge 0}$ is a $k$-synchronized sequence, and $f \neq O(1)$, then there exists a constant $c > 0$ such that $f(n) \ge cn$ infinitely often.
\end{lemma}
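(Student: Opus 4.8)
The plan is to give a direct pumping argument on the synchronizing automaton. Assume $(f(n))_{n\ge 0}$ is $k$-synchronized but unbounded, and let $A$ be an automaton with $m$ states reading pairs of digits and accepting $\{\rep_k(n,f(n))\mid n\in\mathbb{N}\}$, where the shorter word is left-padded with zeroes so that along any accepted computation the two coordinate words always have the \emph{same} length. The goal is to produce a single infinite family $n_1<n_2<\cdots$ together with one fixed constant $c>0$ for which $f(n_s)\ge c\,n_s$ for every $s$; this is exactly ``$f(n)\ge cn$ infinitely often''.

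First I would locate a pumpable loop sitting over the significant part of the second coordinate. Since $f$ is unbounded, I can pick $n$ with $q:=|\rep_k(f(n))|\ge m+1$. In the accepted pair the second coordinate equals $0^{L-q}\rep_k(f(n))$ with $L=|\rep_k(n)|\ge q$, so its significant digits occupy the last $q$ positions and the leading one is nonzero. Reading the first $m+1$ of these significant positions, the pigeonhole principle yields two equal states, hence a loop of some length $\ell\le m$ whose first copy ends within the first $m+1$ significant positions. Writing the accepted pair as $(\alpha x\beta,\ \alpha' y\beta')$, where $x,y$ are the two coordinate labels of the loop and $|\alpha|=|\alpha'|=p_1$, the key point is that a nonzero second-coordinate digit already occurs at a fixed position $j_0\le p_1+1$.

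Next I would pump this loop. For every $s\ge 1$ the word $(\alpha x^s\beta,\ \alpha' y^s\beta')$ is accepted by $A$; since the loop does not sit at the very first position, $\alpha x^s\beta$ keeps its nonzero leading digit and is therefore a genuine $\rep_k$ with no leading zeroes, while both coordinate words share the common length $L_s=L+(s-1)\ell$. Hence this pair is $\rep_k(n_s,f(n_s))$ with $n_s=\val_k(\alpha x^s\beta)$ and $f(n_s)=\val_k(\alpha' y^s\beta')$, and $(n_s)_s$ is strictly increasing. Because the prefix $\alpha' y$ is untouched by pumping, the leading nonzero digit of the second coordinate stays at the fixed position $j_0$, so that
$$
f(n_s)\ \ge\ k^{\,L_s-j_0}\ \ge\ k^{\,L_s-p_1-1},
\qquad
n_s\ <\ k^{\,L_s}.
$$
Dividing gives $f(n_s)/n_s\ge k^{-p_1-1}=:c>0$, a constant independent of $s$, which completes the argument.

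The delicate point, and the one I would check most carefully, is the bookkeeping that makes the pumped pair a legitimate value of the synchronized relation: one must verify that $\alpha x^s\beta$ carries no spurious leading zero (so that it really is $\rep_k(n_s)$ rather than a padded word) and that the common length $L_s$ equals $\max(|\rep_k(n_s)|,|\rep_k(f(n_s))|)$, so that the accepted pair is the canonical $\rep_k(n_s,f(n_s))$ and $f$ is being evaluated correctly; everything else is a routine magnitude estimate. The reason unboundedness is essential is precisely that it forces the loop to straddle a significant (nonzero) digit of the second coordinate. For a bounded $f$ the only available loops read zeroes over a second coordinate that is pure padding, and pumping them leaves $f$ constant while $n$ grows without bound, which is exactly why bounded synchronized sequences escape the linear lower bound.
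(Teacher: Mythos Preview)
The paper does not prove this lemma; it is quoted from Schaeffer and Shallit \cite[Lemma~4]{SchaSha} and used as a black box. So there is no in-paper proof to compare against, and your proposal fills in what the paper deliberately omits.

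Your pumping argument is the standard one and is essentially correct, but there is one unjustified step. You write ``with $L=|\rep_k(n)|\ge q$'', silently assuming $|\rep_k(n)|\ge|\rep_k(f(n))|$ for the chosen $n$. Nothing in the hypotheses guarantees this; in the padded pair it could be the \emph{first} coordinate that carries the leading zeroes, and then your claim that ``$\alpha x^s\beta$ keeps its nonzero leading digit'' can fail. The repair is immediate: if $|\rep_k(f(n))|>|\rep_k(n)|$ for infinitely many $n$, then $f(n)\ge k^{|\rep_k(n)|}>n$ for those $n$ and the conclusion holds with $c=1$; otherwise the inequality $|\rep_k(n)|\ge|\rep_k(f(n))|$ holds for all sufficiently large $n$, and since $f$ is unbounded you can pick your starting $n$ in that regime and run the argument exactly as written.

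One further cosmetic point: even in the good regime the loop \emph{can} begin at the very first position (when $|\rep_k(n)|=|\rep_k(f(n))|$, so $M-q=0$), contrary to what you assert. But then $\alpha=\varepsilon$ and the first letter of $x$ is the nonzero leading digit of $\rep_k(n)$, so $\alpha x^s\beta=x^s\beta$ still has no spurious leading zero, and the magnitude estimates $f(n_s)\ge k^{L_s-j_0}$, $n_s<k^{L_s}$ go through unchanged.
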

 
We can make use of the growth order of the Stern--Brocot sequence. One can numerically estimate that the joint spectral radius $\rho$ of the matrices $\mu(0)$ and $\mu(1)$ is between $1.61$ and $1.71$. Since, for all $n\ge 0$, $S(n)$ can be computed by multiplying those matrices and the number of multiplications is proportional to $\log_2(n)$, it follows that the growth rate of $S(n)$ cannot be higher than $\rho^{\log_2(n)}=n^{\log_2(\rho)}$ multiplied by a constant. By Lemma~\ref{lem:SchaSha}, the sequence $(S(n))_{n\ge 0}$ cannot be $2$-synchronized. 

\section{Extension to a larger alphabet}\label{sec:4}

In this section, we investigate the case of the base-$k$ numeration system for $k \geq 2$.
We are able to extend our main tool that are the tries of subwords defined in Section~\ref{sec:3}. 
However, it is not straightforward to deduce an analogue of Proposition~\ref{pro:rec} from Corollary~\ref{cor:arbre base k}. Nevertheless, numerical computations suggest that the analogue of the sequence $(S(n))_{n\ge 0}$ in the base-$k$ case seems to be $k$-regular. 
In \cite{LRS3}, we prove an exact behavior for the summatory function of $(S(n))_{n\ge 0}$ and conjecture a similar behavior for larger bases.
General asymptotic estimates for summatory function of $k$-regular sequences are provided by Dumas in \cite{Dumas}.

For instance, if $(S_3(n))_{n\ge 0}$ denotes the analogue of the sequence $(S(n))_{n\ge 0}$ in the base-$3$ case, then the first few terms of this sequence are
$$1, 2, 2, 3, 3, 4, 3, 4, 3, 4, 5, 6, 5, 4, 6, 7, 7, 6, 4, 6, 5, 7, 
6, 7, 5, 6, 4, 5, 7, 8, 8, 7, 10,\ldots.
$$
We conjecture the following two results that are the analogues of Proposition~\ref{pro:rec} and Theorem~\ref{the:rel}. For all $\ell\ge 1$ and $0\le r < 3^\ell$, the sequence $(S_3(n))_{n\ge 0}$ satisfies $S_3(0)=1$, $S_3(1)=S_3(2)=2$,
$$
S_3(3^{\ell}+r)=\left\{
    \begin{array}{ll}
        S_3(3^{\ell-1}+r)+S_3(r),& \text{ if } 0 \le r < 3^{\ell-1};\\
        2 S_3(r)-S_3(r-3^{\ell-1}),& \text{ if } 3^{\ell-1} \le r < 2\cdot 3^{\ell-1};\\
        2 S_3(r)+S_3(r-3^{\ell-1})-2S_3(r-2\cdot 3^{\ell-1})       ,& \text{ if } 2\cdot 3^{\ell-1} \le r < 3^{\ell}; \\
    \end{array}\right.
$$
and
$$
S_3(2\cdot 3^{\ell}+r)=\left\{
    \begin{array}{ll}
        S_3(2\cdot 3^{\ell-1}+r)+S_3(r),& \text{ if } 0 \le r < 3^{\ell-1};\\
        S_3(r+3^{\ell-1})+2S_3(r)-2S_3(r-3^{\ell-1}),& \text{ if } 3^{\ell-1} \le r < 2\cdot 3^{\ell-1};\\
        2 S_3(r)-S_3(r-2\cdot 3^{\ell-1}),& \text{ if } 2\cdot 3^{\ell-1} \le r < 3^{\ell}.\\
    \end{array}\right.
$$
Observe that we divide the statement into two cases depending on the first letter of the base-$3$ expansion of the integers. Moreover, there exists a partial palindromic structure inside of the sequence $(S_3(n))_{n\ge 0}$ that can be stated as follows: for all $\ell\ge 1$ and $0\le r < 3^\ell$,
$$
S_3(2\cdot 3^{\ell}+r) = S_3(2\cdot 3^{\ell}+3^{\ell}-r-1).  
$$

\begin{conj}\label{the:rel3}
The sequence $(S_3(n))_{n \ge 0}$ satisfies, for all $n \geq 0$,
\begin{eqnarray*}
	S_3(3n+2) 	&=& 5\, S_3(n)-S_3(3n)-S_3(3n+1)			\\
	S_3(9n) 	&=& -S_3(n)+2\, S_3(3n)			\\
	S_3(9n+1) 	&=& -2\, S_3(n)+2\, S_3(3n)+S_3(3n+1)			\\
	S_3(9n+3) 	&=& -2\, S_3(n)+S_3(3n)+2\, S_3(3n+1)			\\
	S_3(9n+4) 	&=& -S_3(n)+2\, S_3(3n+1)			\\
	S_3(9n+6) 	&=& 8\, S_3(n)-S_3(3n)-2\, S_3(3n+1)			\\
	S_3(9n+7) 	&=& 8\, S_3(n)-2\,S_3(3n)-S_3(3n+1).
\end{eqnarray*}
In particular, $(S_3(n))_{n\ge 0}$ is $3$-regular.
\end{conj}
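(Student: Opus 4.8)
The plan is to mirror, as closely as possible, the proof of Theorem~\ref{the:rel} in the base-$2$ setting. The first goal is to establish the seven stated identities for $(S_3(n))_{n\ge 0}$ from the (conjectured) recurrence relations for $S_3(3^\ell+r)$ and $S_3(2\cdot 3^\ell+r)$ together with the partial palindromic identity $S_3(2\cdot 3^\ell+r)=S_3(2\cdot 3^\ell+3^\ell-r-1)$. As in the base-$2$ case, I would first prove the single ``boundary'' relation $S_3(3n)+S_3(3n+1)+S_3(3n+2)=5\,S_3(n)$ by strong induction on $n$: writing $n=3^\ell+r$ or $n=2\cdot 3^\ell+r$ and splitting according to the position of $r$ among the three subintervals of $[0,3^\ell)$ (and, for the $2\cdot 3^\ell$ prefix, the three subintervals again), each case reduces — after applying the recurrences to each of $S_3(3n)$, $S_3(3n+1)$, $S_3(3n+2)$ and collecting terms — to an instance of the same relation at strictly smaller arguments, closing the induction. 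The base cases $n\le 2$ are checked by hand against the listed initial terms.

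Having the boundary relation in hand, I would then prove the remaining six identities (the ones for $S_3(9n+j)$, $j\in\{0,1,3,4,6,7\}$) again by strong induction on $n$, exactly as in the proof of Theorem~\ref{the:rel}: for each residue $j$, write $n=3^\ell+r$ (resp.\ $2\cdot 3^\ell+r$), apply the base-$3$ recurrence to rewrite $S_3(9n+j)=S_3(3^{\ell+2}+\text{stuff})$, and then peel off one level using the recurrence, the palindromic identity, the boundary relation, and the induction hypothesis, until everything is expressed in terms of $S_3(n)$, $S_3(3n)$, $S_3(3n+1)$. The residues $j=2,5,8$ need not be treated directly since $9n+2=3(3n)+2$, $9n+5=3(3n+1)+2$, $9n+8=3(3n+2)+2$ are all covered by the already-proved boundary relation. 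This is a large but purely mechanical bookkeeping exercise; the main risk is simply arithmetic error in the many case splits, which can be mitigated by cross-checking against the numerical data for $(S_3(n))_{n\ge 0}$.

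Finally, $3$-regularity follows formally: the seven identities express $(S_3(3^i n+j))_{n\ge 0}$ for $i\in\{1,2\}$ and all admissible $j$ as $\mathbb{Z}$-linear combinations of the three sequences $(S_3(n))_{n\ge 0}$, $(S_3(3n))_{n\ge 0}$, $(S_3(3n+1))_{n\ge 0}$; iterating the relations (each $S_3(9n+j)$ feeds back into the span of these three, and each $S_3(3n+j)$ likewise via $j=2$ and the trivial cases $j=0,1$), one shows by induction on $i$ that every element of $\mathcal{K}_3(S_3)$ lies in the $\mathbb{Z}$-module generated by these three sequences. Hence $\langle\mathcal{K}_3(S_3)\rangle$ is finitely generated and $(S_3(n))_{n\ge 0}$ is $3$-regular, which also yields the $2\times 2$ (in fact $3\times 3$) matrix representation computing $S_3(n)$ in $O(\log_3 n)$ multiplications.

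The genuine obstacle, and the reason this is stated as a conjecture rather than a theorem, is not the algebra above but the very first input: deriving the base-$3$ recurrence for $S_3(3^\ell+r)$ and $S_3(2\cdot 3^\ell+r)$ from the tries of subwords. In base $2$ this came from Lemmas~\ref{lem:palindrome}, \ref{lem:u=100}, \ref{lem:u=101}, whose proofs rely on the fact that over a two-letter alphabet a word in $L_2$ decomposes into alternating blocks of $0$'s and $1$'s, so the trie $\mathcal{T}_{L_2}(w)$ has the rigid ``isomorphic copies of $T_\ell$'' structure of Proposition~\ref{pro:forme_arbre}. Over a three-letter alphabet the block structure is far richer — between two occurrences of a digit there may be an arbitrary pattern of the other two digits — so although Corollary~\ref{cor:arbre base k} still describes $\mathcal{T}_{L_k}(w)$ recursively, extracting a clean closed-form recurrence in terms of the ``digit-splitting'' of $n$ seems to require a substantially more delicate analysis of how the subtrees overlap. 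Completing that analysis is exactly what would upgrade Conjecture~\ref{the:rel3} to a theorem.
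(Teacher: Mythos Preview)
The paper does not prove this statement: it is explicitly labeled a \emph{conjecture}, and no proof is given anywhere in the paper. You correctly identify this, and your final paragraph pinpoints exactly the same obstruction the authors implicitly leave open --- namely, that the base-$3$ recurrences for $S_3(3^\ell+r)$ and $S_3(2\cdot 3^\ell+r)$ preceding Conjecture~\ref{the:rel3} are themselves only conjectured, so there is no rigorous starting point analogous to Proposition~\ref{pro:rec}.

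Your conditional outline (assume the recurrences, then derive the seven identities by strong induction exactly as in Theorem~\ref{the:rel}, and conclude $3$-regularity from finite generation of the kernel) is the natural approach and matches what the paper does in base~$2$; there is nothing to compare it against because the paper offers no argument of its own here. One minor point: your closing sentence about a ``$2\times 2$ (in fact $3\times 3$) matrix representation'' is slightly garbled --- with three generators the matrices are $3\times 3$, not $2\times 2$ --- but this is cosmetic. The substantive assessment is that you have not proved the conjecture (nor claimed to), and your diagnosis of where the difficulty lies is accurate.
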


For each $k\ge 2$, we let $L_k$ denote the language $\{1,\ldots,k-1\}\{0,\ldots,k-1\}^*\cup\{\varepsilon\}$. Let us introduce some notation.
As in Section~\ref{sec:3}, for each non-empty word $w \in L_k$, we consider an equivalent factorization of $w$ into maximal blocks of letters of the form
\[
	w = a_1^{n_1} \cdots a_M^{n_M},
\]
with $n_\ell \geq 1$ for all $\ell$.
For each $\ell \in \{0,\dots,M-1\}$, we consider the subtree $T_\ell$ of $\mathcal{T}_{L_k}(w)$ whose root is the node $a_1^{n_1} \cdots a_\ell^{n_\ell} a_{\ell+1}$.
We again set $T_M$ to be an empty tree with no node.

For each $\ell \in \{0,\dots,M-1\}$, let $\mathrm{Alph}(\ell)$ denote the set of letters occurring in $a_{\ell+1}\cdots a_M$.
Then for each letter $b \in \mathrm{Alph}(\ell)$, we let $j(b,\ell)$ denote the first index greater than $\ell$ such that $a_{j(b,\ell)} = b$.
We obtain the analogue of Proposition~\ref{pro:forme_arbre}.

\begin{prop}
Let $w$ be a finite word in $L_k$. 
With the above notation about $M$ and the subtrees $T_\ell$, the tree $\mathcal{T}_{L_k}(w)$ has the following properties.
\begin{enumerate}
\item
The node of label $\varepsilon$ has $\#(\mathrm{Alph}(0)\setminus\{0\})$ children that are $b$ for $b \in \mathrm{Alph}(0)\setminus\{0\}$. Each child $b$ is the root of a tree isomorphic $T_{j(b,0)-1}$.
\item
For each $\ell \in \{0,\dots,M-1\}$ and each $i \in \{0,\dots,n_{\ell+1}-1\}$ with $(\ell,i) \neq (0,0)$, the node of label $x = a_1^{n_1} \cdots a_{\ell}^{n_{\ell}}a_{\ell+1}^i$ has $\#(\mathrm{Alph}(\ell))$ children that are $xb$ for $b \in \mathrm{Alph}(\ell)$. Each child $xb$ with $b \neq a_{\ell+1}$ is the root of a tree isomorphic to $T_{j(b,\ell)-1}$.
\end{enumerate}
\end{prop}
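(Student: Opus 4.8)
The plan is to derive the whole statement from a single elementary fact about scattered subwords --- the fact that makes the base-$2$ Proposition~\ref{pro:forme_arbre} ``immediate'' --- and then to carry out the bookkeeping forced by the larger alphabet. First I would record the following. If $x$ is a subword of $w$, write $w^{(x)}$ for the suffix of $w$ that remains after embedding $x$ into $w$ as far to the left as possible (matching the letters of $x$ greedily, one by one). Then for every word $y$, the concatenation $xy$ is a subword of $w$ if and only if $y$ is a subword of $w^{(x)}$: the ``if'' part is clear, and the ``only if'' part holds because any embedding of $x$ into $w$ ends no earlier than the greedy one. Consequently, for any node $x$ of $\mathcal{T}(w)$, erasing the common prefix $x$ from the labels of the subtree rooted at $x$ yields exactly $\mathcal{T}(w^{(x)})$ as an edge-labeled rooted tree; and since passing from $\mathcal{T}(w)$ to $\mathcal{T}_{L_k}(w)$ only deletes the subtree hanging at the node $0$, every non-root node of $\mathcal{T}_{L_k}(w)$ has the same subtree in both tries. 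In particular, two nodes $x$ and $x'$ with $w^{(x)}=w^{(x')}$ are roots of isomorphic subtrees.

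Next I would compute $w^{(x)}$ for the nodes appearing in the statement, namely $x=a_1^{n_1}\cdots a_\ell^{n_\ell}a_{\ell+1}^{i}$ with $0\le i<n_{\ell+1}$ (for $\ell=0$ this reads $x=a_1^{i}$, and $(\ell,i)=(0,0)$ is the root $\varepsilon$). Since $w$ literally begins with $a_1^{n_1}\cdots a_\ell^{n_\ell}$ and consecutive blocks carry distinct letters, the greedy embedding of $x$ is forced: it consumes the first $n_1+\cdots+n_\ell$ letters of $w$ for the prefix and then the first $i$ copies of $a_{\ell+1}$, so $w^{(x)}=a_{\ell+1}^{n_{\ell+1}-i}a_{\ell+2}^{n_{\ell+2}}\cdots a_M^{n_M}$. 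As $n_{\ell+1}-i\ge 1$, the set of letters occurring in $w^{(x)}$ equals that of $a_{\ell+1}\cdots a_M$, i.e.\ $\mathrm{Alph}(\ell)$; by the first paragraph, $x$ therefore has exactly $\#\mathrm{Alph}(\ell)$ children, namely the nodes $xb$ for $b\in\mathrm{Alph}(\ell)$ (when $x=\varepsilon$ one discards in addition $b=0$, which is the source of the $\setminus\{0\}$). Finally, for a child $xb$ with $b\ne a_{\ell+1}$, the first occurrence of $b$ in $w^{(x)}$ is the leading letter of the block $a_{j(b,\ell)}^{n_{j(b,\ell)}}$ --- this is precisely where the definition of $j(b,\ell)$ is used --- so $w^{(xb)}=a_{j(b,\ell)}^{n_{j(b,\ell)}-1}a_{j(b,\ell)+1}^{n_{j(b,\ell)+1}}\cdots a_M^{n_M}$.

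To conclude, the root of $T_{j(b,\ell)-1}$ is the node $a_1^{n_1}\cdots a_{j(b,\ell)-1}^{n_{j(b,\ell)-1}}a_{j(b,\ell)}$, and the very same computation shows its left-over suffix to be $a_{j(b,\ell)}^{n_{j(b,\ell)}-1}a_{j(b,\ell)+1}^{n_{j(b,\ell)+1}}\cdots a_M^{n_M}=w^{(xb)}$; by the first paragraph, $xb$ is the root of a subtree isomorphic to $T_{j(b,\ell)-1}$, which is item~(2). Item~(1) is the case $\ell=0$, $i=0$ with the child $0$ discarded, and no exclusion of $b=a_1$ is needed there since the child $a_1$ is literally the root of $T_0$, consistently with $j(a_1,0)=1$. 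I expect the only genuine subtlety --- and the step I would check most carefully --- to be the ``forced'' greedy embedding used in the second paragraph: it rests on $w$ having the literal prefix $a_1^{n_1}\cdots a_\ell^{n_\ell}$ and on adjacent maximal blocks being over distinct letters, and it is exactly the constraint $i\le n_{\ell+1}-1$ (rather than $i\le n_{\ell+1}$) that keeps a copy of $a_{\ell+1}$ in $w^{(x)}$, which is what makes $\mathrm{Alph}(\ell)$ the correct set of children. Everything else is a faithful transcription of the base-$2$ argument.
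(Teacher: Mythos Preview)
Your argument is correct: the greedy--suffix lemma (that the subtree of $\mathcal{T}(w)$ at a node $x$ is a copy of $\mathcal{T}(w^{(x)})$) is exactly the mechanism underlying the proposition, and your computations of $w^{(x)}$ and $w^{(xb)}$ are accurate, including the role of the constraint $i\le n_{\ell+1}-1$. The paper itself gives no proof for this proposition (nor for its base-$2$ analogue, which is introduced with ``Since we are dealing with subwords of $w$, we immediately get the following''), so your write-up simply makes explicit the reasoning the authors leave to the reader; there is nothing to compare.
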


\begin{example}
Let $w = 22000112 \in L_3$.
The tree $\mathcal{T}_{L_3}(w)$ is depicted in Figure~\ref{fig:trie5}. We follow the same lines as for Example \ref{exa:tr1}. We use three different colors to represent the letters $0,1,2$.
\begin{figure}[h!tb]
    \centering
    {\psfrag{T0}{$T_0$}\psfrag{T1}{$T_1$}\psfrag{T2}{$T_2$}\psfrag{T3}{$T_3$}
\psfrag{0}{$0$}\psfrag{1}{$1$}\includegraphics{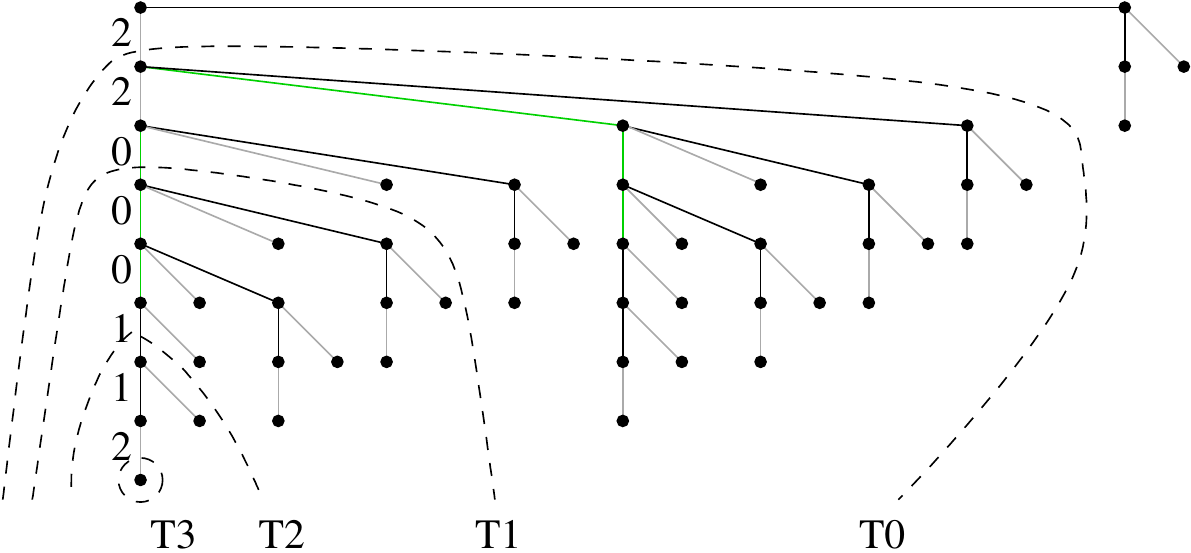}}
    \caption{The trie $\mathcal{T}_{L_3}(22000112)$.}
    \label{fig:trie5}
\end{figure}
\end{example}

With a similar proof, we then get the following analogue of Corollary~\ref{cor:arbreTl}.

\begin{corollary}\label{cor:arbre base k}
Let $w$ be a finite word in $L_k$. 
With the above notation about $M$ and the subtrees $T_\ell$, the following holds.
The number of nodes in $T_{M-1}$ is $n_M$.
For $i=M-2,M-3,\ldots,0$, the number of nodes in $T_i$ is given by
$$
	\# T_i
	= n_{i+1} 
	\left (1+\sum_{\substack{b \in \mathrm{Alph}(i+1) \\ b \neq a_{i+1}}} \# T_{j(b,i+1)-1} \right)
	+ \# T_{j(a_{i+1},i+1)-1}.
$$
The number of subwords in $L_k$ of $w$ is given by $1+\# T_0 +\sum_{\substack{b \in \mathrm{Alph}(1) \\ b \neq 0, a_{1}}} \# T_{j(b,1)-1}$.
\end{corollary}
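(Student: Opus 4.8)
The plan is to mimic the proof of Corollary~\ref{cor:arbreTl} exactly, replacing the binary alternation structure by the general base-$k$ block structure recorded in the preceding Proposition. First I would fix $w = a_1^{n_1}\cdots a_M^{n_M} \in L_k$ and, as in the binary case, start from the linear tree labelled by $w$, namely the path $\varepsilon, a_1, a_1^2, \dots, a_1^{n_1}, a_1^{n_1}a_2, \dots, w$, which has exactly $n_1 + \cdots + n_M + 1$ nodes. The base case is that $T_{M-1}$ is this kind of linear tree truncated at its root $a_1^{n_1}\cdots a_{M-1}^{n_{M-1}}a_M$: since $a_M$ occurs only in the last block, no later block can add a child below it, so $T_{M-1}$ is the linear chain $a_1^{n_1}\cdots a_{M-1}^{n_{M-1}}a_M^i$ for $i=1,\dots,n_M$, giving $\#T_{M-1} = n_M$.

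Next I would run the bottom-up reconstruction. Suppose inductively that $\#T_{i+1}, \dots, \#T_{M-1}$ have been computed. To build $T_i$, whose root is $x_0 = a_1^{n_1}\cdots a_i^{n_i}a_{i+1}$, observe that $T_i$ consists of the chain of nodes $x_j = a_1^{n_1}\cdots a_i^{n_i}a_{i+1}^{j+1}$ for $j=0,\dots,n_{i+1}-1$ (that is why the chain has $n_{i+1}$ nodes, contributing the ``$+1$'' inside the outer factor), together with, hanging off each such node, one copy of the subtree rooted at each child $x_jb$ with $b \in \mathrm{Alph}(i+1)$, $b \neq a_{i+1}$. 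By the second item of the Proposition each such copy is isomorphic to $T_{j(b,i+1)-1}$, which by the induction hypothesis has $\#T_{j(b,i+1)-1}$ nodes (note $j(b,i+1)-1 \ge i+1$, so this is already known). Finally, below the last node of the chain $x_{n_{i+1}-1} = a_1^{n_1}\cdots a_{i+1}^{n_{i+1}}$ there is also the child $x_{n_{i+1}-1}a_{i+1}$ continuing the block structure of $w$; the subtree it roots is exactly $T_{j(a_{i+1},i+1)-1}$, contributing the trailing summand. Counting: $n_{i+1}$ chain nodes, each carrying $\sum_{b\in\mathrm{Alph}(i+1),\, b\neq a_{i+1}}\#T_{j(b,i+1)-1}$ extra nodes, plus $\#T_{j(a_{i+1},i+1)-1}$, which is precisely the displayed recurrence.

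For the last assertion, the full tree $\mathcal{T}_{L_k}(w)$ is obtained from $T_0$ (rooted at $a_1$, a copy of which sits below $\varepsilon$ via the first item of the Proposition, since $a_1 \in \mathrm{Alph}(0)\setminus\{0\}$ and $j(a_1,0)=1$) together with the root $\varepsilon$ itself and one extra copy of $T_{j(b,1)-1}$ — wait, more carefully: the root $\varepsilon$ has one child $b$ for each $b \in \mathrm{Alph}(0)\setminus\{0\}$, the child $a_1$ rooting $T_{j(a_1,0)-1}=T_0$ and each other child $b$ rooting $T_{j(b,0)-1}=T_{j(b,1)-1}$ (the first occurrence of $b\neq a_1$ necessarily lies at index $\ge 2$, so $j(b,0)=j(b,1)$). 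Hence $\#\mathcal{T}_{L_k}(w) = 1 + \#T_0 + \sum_{b\in\mathrm{Alph}(1),\, b\neq 0,a_1}\#T_{j(b,1)-1}$, and since the number of subwords of $w$ lying in $L_k$ equals the number of nodes of $\mathcal{T}_{L_k}(w)$, this gives the stated count.

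The routine bookkeeping — checking that every copied subtree is attached to exactly the right node and that no subword is counted twice — is the only place where care is needed; it is entirely analogous to the binary argument of Example~\ref{exa:bu} and Corollary~\ref{cor:arbreTl}, so I expect the main (modest) obstacle to be purely notational, namely keeping the index shifts in $j(b,\ell)-1$ straight and handling the special role of the root letter $a_1$ and of the forbidden leading letter $0$. A short remark pointing the reader to Figure~\ref{fig:trie5} for the case $w = 22000112$ would make the attachments transparent.
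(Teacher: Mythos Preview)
Your proposal is correct and follows exactly the approach the paper intends: the paper's proof of this corollary is the single sentence ``With a similar proof [as Corollary~\ref{cor:arbreTl}],'' and your write-up is precisely that bottom-up reconstruction from the linear tree, using the preceding structural Proposition to attach the subtrees $T_{j(b,\ell)-1}$. One small phrasing wobble: the trailing summand $\#T_{j(a_{i+1},i+1)-1}$ does come from the child $xa_{i+1}$ of the last chain node, but this child does \emph{not} ``continue the block structure of $w$'' (the $(i+1)$st block is already exhausted) --- rather it exists only when $a_{i+1}\in\mathrm{Alph}(i+1)$, reaching a later block with the same letter; when $a_{i+1}\notin\mathrm{Alph}(i+1)$ the term is to be read as $0$.
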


\section{Generalizations: the Fibonacci case}\label{sec:fibo}

Other generalizations of the integer base numeration system are the linear numeration systems. 
In the previous sections, we considered the language $\{\varepsilon\}\cup 1\{0,1\}^*$. This means that there was no restriction on the words over $\{0,1\}$ except for the presence of a convenient leading $1$. One could naturally define other Pascal triangles by considering any infinite language $K$ over a totally ordered alphabet. If we order the words of $K$ by genealogical ordering: $w_0,w_1,w_2,\ldots$, we obtain an infinite array where the element in the $i$th row and $j$th column, is $\binom{w_i}{w_j}$. This corresponds exactly to the case where we replace base-$2$ expansions with representations within an abstract numeration system based on $K$ \cite{LR}, \cite[Chap.~3]{CANT}. 

We will handle the case of the Fibonacci numeration system, i.e., with the language $L_F=\{\varepsilon\}\cup 1\{0,01\}^*$. 
It turns out that the sequence $(S_F(n))_{n\ge 0}$ counting the number of words in $L_F$ occurring as subwords of the $n$th word in $L_F$ has properties similar to those of $(S(n))_{n\ge 0}$. 
In particular, the notion of $k$-regularity may be extended to take into account a larger class of numeration systems \cite{Sha88,AST}.

As observed in Remark~\ref{rem:tribo foire}, what seems to be important for further generalizations is that the language $K$ of the numeration is the set of words not starting with 0 and not containing occurrences of a set of words of length $2$.
For the Fibonacci numeration system, the language of the numeration is obtained by avoiding the factor $11$.

In the following definition, we recall the notion of a positional numeration system based on a sequence of integers. See, for instance, \cite{Fraenkel,Rigo2} or \cite[Chap.~2]{CANT}.

\begin{definition}
Let $U = (U(n))_{n\ge 0}$ be a sequence of integers such that $U$ is increasing, $U(0)=1$ and $\sup_{n \geq 0} \frac{U(n+1)}{U(n)}$ is bounded by a constant. 
We say that $U$ is a \emph{linear numeration system} if $U$ satisfies a linear recurrence relation, i.e., there exist $a_0, \ldots, a_{k-1} \in \mathbb{Z}$ such that
$$
\forall n \ge 0, \quad U(n+k) = a_{k-1}\, U(n+k-1) + \cdots + a_0\, U(n).
$$
Let $n$ be a positive integer. By successive Euclidean divisions, there exists $\ell\ge 1$ such that 
$$ n = \sum_{j=0}^{\ell -1} c_j\, U(j)
$$
where the $c_j$'s are non-negative integers and $c_{\ell-1}$ is non-zero. The word $c_{\ell -1}\cdots c_0$ is called the \emph{normal $U$-representation} of $n$ and is denoted by $\rep_U(n)$. In other words, the word $c_{\ell-1}\cdots c_0$ is the greedy expansion of $n$ in the considered numeration. We set $\rep_U(0)=\varepsilon$. Finally, we say that $\rep_U(\mathbb{N})$ is the \emph{language of the numeration}. If $d_{\ell -1}\cdots d_0$ is a word over an alphabet of digits, then we set
$$\val_U(d_{\ell -1}\cdots d_0)=\sum_{j=0}^{\ell -1} d_j\, U(j).$$
\end{definition}

We consider the Fibonacci sequence $F=(F(n))_{n\ge 0}$ defined by $F(0)=1$, $F(1)=2$ and $F(n+2)=F(n+1)+F(n)$ for all $n\ge 0$. Using the previous definition, the linear numeration system associated with $F$ is called the \emph{Zeckendorf numeration system} \cite{Zeck} or the \emph{Fibonacci numeration system}. Observe that, for this system, $\rep_F(\mathbb{N}) = L_F$ is given by the set of words over $\{0, 1\}$ avoiding the factor $11$. In Table~\ref{tab:Zeck}, we write the normal $F$-representations of the first few integers. 
\begin{table}[h!t]
$$\begin{array}{c|r||c|r||c|r}
0 & \varepsilon  & 6 & 1001 & 12 & 10101\\
1 & 1 & 7 & 1010 & 13 & 100000 \\
2 & 10 & 8 & 10000 & 14 & 100001  \\
3 & 100 & 9 & 10001 & 15 & 100010 \\
4 & 101 & 10 & 10010 & 16 & 100100 \\
5 & 1000 & 11 & 10100 & 17 & 100101
\end{array}$$ 
\caption{The normal $F$-representations of the first few integers.}
    \label{tab:Zeck}
\end{table}

To define an array, we consider all the words in $L_F$ and we order them using the radix order. In the same way as the generalized Pascal triangle $\mathsf{P}_2$ defined in \cite{LRS}, for a word $u\in L_F$, we compute the binomial coefficient $\binom{u}{v}$ for all words $v\in L_F$. The first few values of the corresponding Pascal triangle with words in $L_F$, denoted by $\mathsf{P}_F$, are given in Table~\ref{tab:coeffFib}. 
\begin{table}[h!t]
$$\begin{array}{r|r|ccccccccc|c}
&&\varepsilon&1&10&100&101&1000&1001&1010&10000&S_F\\
\hline
\rep_F(0)& \varepsilon&1 & 0 & 0 & 0 & 0 & 0 & 0 & 0 & 0 & 1\\
\rep_F(1)& 1&1 & 1 & 0 & 0 & 0 & 0 & 0 & 0 & 0 & 2\\
\rep_F(2)& 10&1 & 1 & 1 & 0 & 0 & 0 & 0 & 0 & 0 & 3\\
\rep_F(3)& 100&1&1 & 2 & 1 & 0 & 0 & 0 & 0 & 0 & 4\\
\rep_F(4)& 101&1 & 2 & 1 & 0 & 1 & 0 & 0 & 0 & 0 & 4\\
\rep_F(5)& 1000&1 & 1 & 3 & 3 & 0 & 1 & 0 & 0 & 0 & 5\\
\rep_F(6)& 1001&1 & 2 & 2 & 1 & 2 & 0 & 1 & 0 & 0 & 6\\
\rep_F(7)& 1010& 1 & 2 & 3 & 1 & 1 & 0 & 0 & 1 & 0 & 6\\
\rep_F(8)&  10000&1 & 1 & 4 & 6 & 0 & 4 & 0 & 0 & 1 & 6\\
\end{array}$$
\caption{The  first few values in the generalized Pascal triangle~$\mathsf{P}_F$ with words in $L_F$.}
    \label{tab:coeffFib}
\end{table}

As before, we are interested in a particular sequence.

\begin{definition}\label{def:S_F}
Let $(S_F(n))_{n\ge 0}$ be the sequence whose $n$th term, $n\ge 0$, is the number of non-zero elements in the $n$th row of $\mathsf{P}_F$. Hence, the first few terms of this sequence are
$$1, 2, 3, 4, 4, 5, 6, 6, 6, 8, 9, 8, 8, 7, 10, 12, 12, 12, 10, 12, 12, 
8, 12, 15, 16, 16, 15, \ldots.$$
Otherwise stated, for $n\ge 0$, 
\begin{equation}
    \label{eq:defSF}
    S_F(n):=\#\left\{v\in L_F\mid \binom{\rep_F(n)}{v}>0\right\}.
\end{equation}
In Figure~\ref{fig:SFn}, we have depicted the sequence $(S_F(n))_{n\ge 0}$ in the interval $[0,F(11)]$.
\begin{figure}[h!tbp]
    \centering
    \includegraphics[scale=1]{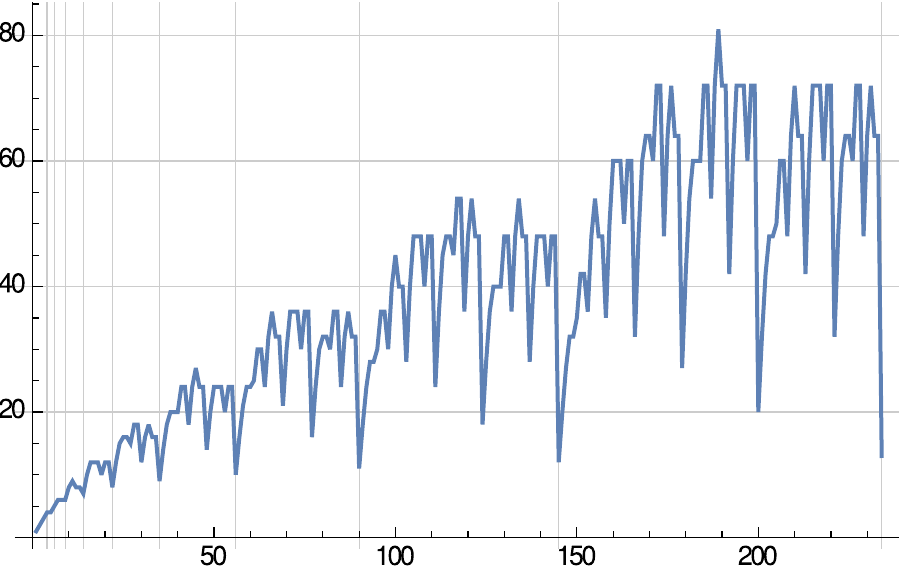}
    \caption{The sequence $(S_F(n))_{n\ge 0}$ in the interval $[0,F(11)]$.}
    \label{fig:SFn}
\end{figure}
\end{definition}

Surprisingly, we will show that the sequence $(S_F(n))_{n\ge 0}$ satisfies a recurrence relation of the same kind as in Proposition~\ref{pro:rec} and is also regular in some extended way \cite{AST,Maes}. 

\begin{prop}\label{pro:recF}
We have $S_F(0)=1$, $S_F(1)=2$. Any integer $n\ge 2$, can be written as $F(\ell)+r$ for some $\ell\ge 1$ and $0 \le r< F(\ell-1)$. We have 
    $$S_F(F(\ell)+r)=\left\{
        \begin{array}{ll}
            S_F(F(\ell-1)+r)+S_F(r),& \text{ if }0\le r<F(\ell-2);\\
            2S_F(r), & \text{ if }F(\ell-2)\le r < F(\ell-1).\\
        \end{array}\right.$$
\end{prop}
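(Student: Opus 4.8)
The strategy mirrors the proof of Proposition~\ref{pro:rec}: first establish a ``palindrome'' statement playing the role of Lemma~\ref{lem:palindrome}, then prove structural lemmas about tries $\mathcal{T}_{L_F}(w)$ that express $S_F$ of a long word in terms of $S_F$ of its shortenings, and finally combine them by induction on $\ell$. Since $L_F$ is the set of words over $\{0,1\}$ avoiding $11$, writing $\rep_F(F(\ell)+r)=10u$ with $u\in L_F$ of length $\ell-1$ and $\val_F(u)=r$ is forced (the digit after the leading $1$ must be $0$). So the whole argument reduces to analysing words of the form $10u$, and the two cases $0\le r<F(\ell-2)$ versus $F(\ell-2)\le r<F(\ell-1)$ correspond precisely to whether $u$ begins with $0$ or with $1$ (since $u$ avoids $11$, if it begins with $1$ then it begins with $10$).

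\textbf{Case $u\in 0\{0,1\}^*$, i.e. $\rep_F(F(\ell)+r)=100u'$.} Here one proves a Fibonacci analogue of Lemma~\ref{lem:u=100}. Working in $\mathcal{T}_{L_F}$, the subtree rooted at $100$ in $\mathcal{T}_{L_F}(100u')$ coincides with the subtree rooted at $10$ in $\mathcal{T}_{L_F}(10u')$ and with the subtree rooted at $1$ in $\mathcal{T}_{L_F}(1\cdot 0u')=\mathcal{T}_{L_F}(10u')$; one tracks the extra ``spine'' nodes and the copies of the subtree $R$ hanging off the first $1$ appearing after the initial block of zeroes, exactly as in Figure~\ref{fig:RS}. (Note the asymmetry with the base-$2$ case: since $11$ is forbidden, the node $11$ never appears, so instead of ``two copies of $R$'' one gets a cleaner count.) Counting nodes gives a relation of the shape $S_F(F(\ell)+r)=2\,S_F(F(\ell-1)+r)-S_F(F(\ell-2)+r)$, valid once $r<F(\ell-2)$ so that all the shifted arguments stay in range. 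Feeding in the induction hypothesis $S_F(F(\ell-1)+r)=S_F(F(\ell-2)+r)+S_F(r)$ (legitimate because $r<F(\ell-2)<F(\ell-3)$ fails—one must instead apply the hypothesis with the appropriate smaller $\ell$, checking $r<F(\ell-2)$ puts us in the first branch for the index $F(\ell-1)+r$) collapses the telescoping sum to $S_F(F(\ell-1)+r)+S_F(r)$, which is the claimed value.

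\textbf{Case $u\in 10\{0,1\}^*$, i.e. $\rep_F(F(\ell)+r)=1010u''$.} Here $r\ge F(\ell-2)$, and one proves an analogue of Lemma~\ref{lem:u=101}, but adapted to the forbidden-$11$ setting: instead of comparing $101u$ with $1u$ and $11u$, one compares $1010u''$ with $10u''$ and with the word obtained by ``removing'' the initial $10$ in the right way. The trie $\mathcal{T}_{L_F}(1010u'')$ splits into a spine together with copies of the subtree rooted at the first occurrence of a digit ending the block pattern, and a node count of the form $a+ b\,\#S + c\,\#R = 2(a'+\#S+\#R)$ should emerge, yielding $S_F(F(\ell)+r)=2\,S_F(r)$ directly. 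The combinatorial identity here is genuinely simpler than in base $2$ precisely because the constraint kills one of the two subtree-copies; the doubling $2S_F(r)$ is the visible trace of that. One then checks the base cases $\ell\in\{1,2\}$ (equivalently $n\in\{2,3,4\}$) by hand against the values $S_F(0)=1,S_F(1)=2,S_F(2)=3,S_F(3)=4,S_F(4)=4$ listed in Definition~\ref{def:S_F}.

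\textbf{Main obstacle.} The delicate point is the trie bookkeeping in the second case: one must correctly identify which subtree gets duplicated and which does not when the forbidden factor $11$ prevents certain branches, and verify that the ``spine + copies'' decomposition is exhaustive and non-overlapping — this is the Fibonacci analogue of Figure~\ref{fig:RS} and it is easy to miscount the constant terms. A secondary nuisance is range-checking: in the induction one repeatedly rewrites arguments like $F(\ell-2)+r$ and must confirm $r$ stays below the relevant Fibonacci threshold so that the correct branch of the recurrence (and of the induction hypothesis) applies; because the Fibonacci gaps $F(\ell-1)-F(\ell-2)=F(\ell-3)$ are uneven, this is more finicky than the clean powers-of-two splitting, but it is routine once one is careful. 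I expect no conceptual difficulty beyond faithfully transporting the base-$2$ trie arguments of Section~\ref{sec:proof} to $L_F$.
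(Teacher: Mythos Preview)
Your plan has a genuine gap in the first case. The word-level analogue of Lemma~\ref{lem:u=100} that you propose,
\[
S_F(F(\ell)+r)=2\,S_F(F(\ell-1)+r)-S_F(F(\ell-2)+r),
\]
requires the three words $100u'$, $10u'$ and $1u'$ all to lie in $L_F$. The last one does so only when $u'$ does not begin with $1$, i.e.\ only when $r<F(\ell-3)$. For $F(\ell-3)\le r<F(\ell-2)$ the word $1u'$ contains the forbidden factor $11$, so the lemma cannot even be formulated at the word level, and the numerical identity genuinely fails: with $\ell=4$, $r=2$ one gets $S_F(10)=9$ but $2S_F(7)-S_F(5)=2\cdot 6-5=7$. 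Thus your two cases do not exhaust the range $0\le r<F(\ell-1)$; a third sub-case, for words of the shape $1001w'$, is missing and needs its own trie identity (it turns out to be $S_F(1001w')=3S_F(1w')$). The ``palindrome'' lemma you announce at the outset is also misplaced: the swap $0\leftrightarrow 1$ does not preserve $L_F$, and Proposition~\ref{pro:recF} has no reflection case to account for.

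The paper takes a different and much cleaner route. It first establishes the closed product formula of Proposition~\ref{prop:formules-Fib}: for $w=10^{n_k}\cdots 10^{n_1}\in L_F$,
\[
\#\{v\in L_F:\tbinom{w}{v}>0\}=(n_1+2)\prod_{j=2}^{k}(n_j+1),
\]
proved by the same bottom-up trie construction you allude to (Proposition~\ref{pro:forme_arbre2}). From this it reads off the $F$-kernel relations of Theorem~\ref{the:freg} and the $2\times 2$ matrix representation of Corollary~\ref{cor:matF}, and Proposition~\ref{pro:recF} is then derived from elementary matrix identities. In fact, once the product formula is available, both branches of Proposition~\ref{pro:recF} are one-line checks: prepending $10$ to $\rep_F(r)$ contributes an extra factor $(1{+}1)=2$, giving the second branch; replacing a leading block $10^{m}$ by $10^{m+1}$ changes one factor from $m{+}1$ to $m{+}2$, giving the first. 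The induction and Lemma~4/Lemma~5 style case-splitting you propose become unnecessary.
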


The proof is postponed to the end of the section (after Corollary~\ref{cor:matF}). Using this result, there is a convenient way given in Table~\ref{tab:sf} to arrange the terms of the sequence.
\begin{table}[h!t]
$$\begin{array}{ccccccccccccc}
1& \\
2& \\
3& \\
4&4 \\
5&6&6 \\
6&8&9&8&8 \\
7&10& 12& 12& 12& 10& 12& 12 \\
8&12& 15& 16& 16& 15& 18& 18& 12& 16& 18& 16& 16 \\
\end{array}$$
    \caption{Arrangement of the first few terms in $(S_F(n))_{n\ge 0}$.}
    \label{tab:sf}
\end{table}
The $0$th (resp., first) row of this table contains the element $S_F(0)$ (resp., $S_F(1)$) of the sequence. Then, for all $n\ge 2$, the $n$th row contains the elements $S_F(i)$ where $i\in\{F(n-1),\ldots,F(n-1)+F(n-2)-1 \}$ and thus contains $F(n-2)$ elements. Using Proposition~\ref{pro:recF}, for $n\ge 4$, the first $F(n-3)$ elements in the $n$th row are derived from the previous row: the difference of two consecutive rows is a prefix of $(S_F(n))_{n\ge 0}$. The last $F(n-4)$ elements in the $n$th row are twice the elements of the $(n-2)$th row. For $n\ge 1$, also observe that the first element of the $n$th row is equal to $S_F(F(n-1))$.

\section{$F$-regularity of $(S_F(n))_{n\ge 0}$}\label{sec:freg}
We now study the regularity of the sequence $(S_F(n))_{n\ge 0}$. Observe that in Definition~\ref{def:ker}, to obtain one element of the kernel $\mathcal{K}_k(s)$, it is equivalent to consider a word $q\in\{0,\ldots,k-1\}^*$ and all the indices of $s$ whose base-$k$ expansion (with possibly some leading zeroes) ends with the suffix $q$. As an example, $(s(2^3n+1))_{n\ge 0}$ corresponds to the word $q=001$ (in base $2$). 

\begin{definition}
Let $X$ be a subset of $\mathbb{N}$. 
For each $q$ in $\{0,1\}^*$, we define the map 
 $$i_q(X):=\val_F(0^*\rep_F(X)\cap \{0,1\}^*q).$$
In other words, $i_q$ selects elements in $X$ whose normal $F$-representation (padded with leading zeroes) ends with $q$. If $i_q(\mathbb{N})$ is non-empty, it is naturally ordered: $$i_q(\mathbb{N})=\{x_{q,0}<x_{q,1}<x_{q,2}<\cdots\}$$
and by abuse of notation, we set $i_q(n)=x_{q,n}$ for all $0 \leq n < \#i_q(\mathbb{N})$.
\end{definition}

\begin{example} 
The first values in $i_0(\mathbb{N})$ (resp., $i_1(\mathbb{N})$) are $0,2,3,5,7,8$ (resp., $1,4,6,9$). In particular, $i_0(0)=0$, $i_0(1)=2$, \ldots, $i_0(5)=8$. The first values in $i_{10}(\mathbb{N})$ are $2,7$.
\end{example}

The second part of the next lemma is particularly important when considering elements of the $F$-kernel (see Theorem~\ref{the:freg}).

\begin{lemma}\label{lem:pq}
We have $i_{pq}(\mathbb{N})\subseteq i_q(\mathbb{N})$ and 
$$
	i_{pq}(\mathbb{N})=i_{pq}(i_q(\mathbb{N})).
$$
Moreover, if $pq\in L_F$, then
$$
	\rep_F(i_p(\mathbb{N}))q=\rep_F(i_{pq}(\mathbb{N}))
$$
i.e., if $up\in 0^*L_F$ is such that $\val_F(up)$ is the $n$th value $x_{p,n}$ occurring in $i_p(\mathbb{N})$, then $\val_F(upq)$ is the $n$th value $x_{pq,n}$ occurring in $i_{pq}(\mathbb{N})$.
\end{lemma}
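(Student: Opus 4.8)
The plan is to analyze how appending digits to normal $F$-representations interacts with the selection maps $i_q$, working purely combinatorially on the level of words in $0^*L_F$. The first claim, $i_{pq}(\mathbb{N})\subseteq i_q(\mathbb{N})$, is immediate from the definitions: if the padded normal representation of $x$ ends with the suffix $pq$, then in particular it ends with $q$, so every element selected by $i_{pq}$ is also selected by $i_q$. For the refinement $i_{pq}(\mathbb{N})=i_{pq}(i_q(\mathbb{N}))$, I would argue that selecting elements of $\mathbb{N}$ whose representation ends with $pq$ is the same as first restricting to those ending with $q$ and then, within that set, keeping those whose representation ends with $pq$; this is a set-theoretic identity of the form $\{x : \rep_F(x)\in \{0,1\}^* pq\} = \{x\in X : \rep_F(x)\in\{0,1\}^*pq\}$ with $X = i_q(\mathbb{N})$, once one notes that $\{0,1\}^*pq\subseteq\{0,1\}^*q$. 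The only subtlety is that $i_q$ is defined via $0^*\rep_F(X)\cap\{0,1\}^*q$, i.e. one must allow leading zeroes before reading off the suffix; I would make explicit that a word $w\in\{0,1\}^*$ lies in $\{0,1\}^*pq$ iff $0^jw$ does, for every $j\ge 0$, so padding does not affect which suffixes are matched.

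For the main assertion, suppose $pq\in L_F$, so in particular $pq$ contains no factor $11$ and does not begin with $0$. I want to show $\rep_F(i_p(\mathbb{N}))\,q = \rep_F(i_{pq}(\mathbb{N}))$ as (ordered) sets of words, equivalently that the map $w\mapsto wq$ is an order-preserving bijection from $0^*\rep_F(i_p(\mathbb{N}))$ (after stripping leading zeroes) onto $0^*\rep_F(i_{pq}(\mathbb{N}))$. The key structural point is: if $up\in 0^*L_F$ — that is, $up$ with its leading zeroes removed is a valid normal $F$-representation, so $up$ ends with $p$ and avoids $11$ — then $upq$ also avoids $11$ and its leading-zero-stripped form is again in $L_F$. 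This is where $pq\in L_F$ is used: the potential new $11$ factor can only occur at the junction of $p$ and $q$, i.e. within $pq$ itself, and since $pq\in L_F$ that does not happen; concatenation $u\cdot pq$ inherits admissibility from $up$ being admissible at its tail and $pq$ being admissible. Conversely, any admissible word ending in $pq$ is of the form $upq$ with $up$ admissible and ending in $p$. So $w\mapsto wq$ is a bijection between the relevant admissible words. Finally, appending a fixed suffix $q$ to words of a common... one must be careful: $w\mapsto wq$ does \emph{not} in general preserve the radix order on arbitrary words, but here all the words $up$ in $i_p(\mathbb{N})$, once padded to a common length, differ only in their prefixes before the fixed tail $p$, and appending the fixed $q$ to all of them preserves their relative order because it appends the same string to each; hence the $n$th smallest element of $i_p(\mathbb{N})$ maps to the $n$th smallest element of $i_{pq}(\mathbb{N})$, which is exactly the indexed statement $x_{p,n}\mapsto x_{pq,n}$.

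The step I expect to be the main obstacle is the admissibility/bijectivity bookkeeping around leading zeroes: one has to be genuinely careful that "ends with the suffix $s$" is interpreted after zero-padding to equal lengths, that stripping leading zeroes commutes with appending a nonempty suffix, and that the order-preservation argument for $w\mapsto wq$ is phrased in terms of \emph{equal-length} representations (pad first, then append $q$) rather than variable-length ones, where it could fail. Once those conventions are pinned down, each implication is a short check. I would therefore organize the proof as: (1) the two easy inclusions/identities for $i_{pq}$ versus $i_q$, with the padding remark; (2) a lemma-internal observation that for $pq\in L_F$, $u p\in 0^*L_F$ iff $upq\in 0^*L_F$, relying only on the fact that a factor $11$ at the $p$–$q$ boundary is forbidden by $pq\in L_F$; (3) conclude that $w\mapsto wq$ restricts to a bijection on the relevant sets of admissible words and is order-preserving on equal-length representations, which yields both $\rep_F(i_p(\mathbb{N}))q=\rep_F(i_{pq}(\mathbb{N}))$ and the reindexing $x_{p,n}\mapsto x_{pq,n}$.
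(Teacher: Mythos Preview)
Your proposal is correct and follows essentially the same approach as the paper. The paper does not give a formal proof of this lemma; its only justification is Remark~\ref{rem:tribo foire}, which isolates precisely the structural fact you use: because the single forbidden factor $11$ has length $2$, one has $0^*L_F\, p^{-1}=0^*L_F\,(pq)^{-1}$ whenever $pq\in L_F$ and $p\neq\varepsilon$, which is equivalent to your observation that $up\in 0^*L_F$ iff $upq\in 0^*L_F$. Your treatment is in fact more careful than the paper's, making explicit the padding conventions and the order-preservation of $w\mapsto wq$ on equal-length representations; one small point worth noting is that your junction argument tacitly assumes $p\neq\varepsilon$ (otherwise the $u$--$q$ boundary is uncontrolled), exactly the hypothesis the paper adds in its remark.
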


Let us illustrate the second part of Lemma~\ref{lem:pq} in Table~\ref{tab:illu}. We can directly determine the $n$th word ending with $01$ in the language of the numeration, from the $n$th word ending with $0$ by simply adding a suffix $1$.
\begin{table}[h!t]
    $$\begin{array}{c|r|r|c}
x_{0,0}&\varepsilon&1&x_{01,0}\\
x_{0,1}&10&101&x_{01,1}\\
x_{0,2}&100&1001&x_{01,2}\\
x_{0,3}&1000&10001&x_{01,3}\\
x_{0,4}&1010&10101&x_{01,4}\\
x_{0,5}&10000&100001&x_{01,5}\\
\end{array}$$
    \caption{Illustration of Lemma~\ref{lem:pq}.}\label{tab:illu}
\end{table}

\begin{remark}\label{rem:tribo foire}
In formal language theory, recall that $Lw^{-1}$ denotes the set of words $v$ such that $vw\in L$. 
The second part of Lemma~\ref{lem:pq} holds because the words in $L_F$ are defined by avoiding the factor $11$.
Indeed, since $11$ has length 2, we have $0^* L_F p^{-1} = 0^*L_F(pq)^{-1}$ when $pq \in L_F$ and $p \neq \varepsilon$. 

This does not always hold, notably when there are longer forbidden factors in the language of the numeration.
Even for the Tribonacci language $L_T$ avoiding the factor $111$, we have $1 \in 0^* L_T 1^{-1}$ and $1 \notin 0^* L_T (11)^{-1}$.
\end{remark}

Definition~\ref{def:k-reg} is replaced for the Fibonacci numeration system by the following notion where subsequences of a given sequence are selected by suffix of normal $F$-representations. This extension was first introduced in \cite{Sha88,AST}.

\begin{definition}
Let $q$ be a word in $\{0,1\}^*$ such that $i_q(\mathbb{N})\neq \emptyset$ and let $s=(s(n))_{n\ge 0}$ be a sequence. The subsequence of $s$ defined by $n \mapsto s(i_q(n))$ is called the \emph{subsequence} of $s$ \emph{with least significant digits equal to $q$}. The set of all these subsequences when $q$ belongs to $\{0,1\}^*$ and is such that  $i_q(\mathbb{N})\neq \emptyset$ is called the \emph{Fibonacci-kernel} or \emph{$F$-kernel} of the sequence $s$ and is denoted by $\mathcal{K}_F(s)$. We say that $s$ is \emph{Fibonacci-regular} or \emph{$F$-regular} if $\langle\mathcal{K}_F(s)\rangle$ is a finitely-generated $\mathbb{Z}$-module.
\end{definition}

Each non-empty word in $L_F$ is factorized into consecutive maximal blocks of zeroes separated by a unique one. It is of the form 
$$10^{n_k}10^{n_{k-1}}\cdots 10^{n_2} 10^{n_1}$$
with $n_1\ge 0$ and $n_2,\ldots,n_k>0$. Depending on the form of $u\in L_F$, we have a formula to count the number of words $v\in L_F$ such that $\binom{u}{v}>0$.

\begin{prop}\label{prop:formules-Fib} 
Let $u$ be a non-empty word in $L_F$ of the form $10^{n_k}10^{n_{k-1}}\cdots 10^{n_2} 10^{n_1}$ 
with $n_1\ge 0$ and $n_2,\ldots,n_k>0$. Then
$$
\#\left\{v\in L_F\mid \binom{u}{v}>0\right\} = (n_1+2) \cdot \prod_{j=2}^k (n_j + 1).
$$
\end{prop}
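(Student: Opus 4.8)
The formula should follow by the same bottom-up tree analysis used in the base-$2$ case, now adapted to the Fibonacci language $L_F$. The key structural point is that a word of $L_F$ written as $u = 10^{n_k}10^{n_{k-1}}\cdots 10^{n_2}10^{n_1}$ has a trie $\mathcal{T}_{L_F}(u)$ whose shape is governed, block by block, by the rule that after reading a $1$ we may only read a $0$ (since $11$ is forbidden), while after reading a $0$ we may read either a $0$ or a $1$. I would introduce, in analogy with Definition~\ref{def:subtree}, subtrees $T_\ell$ rooted at the successive ``branch points'' of the trie — here the natural branch points occur inside the blocks of zeroes, where a node has two children (one continuing the block, one starting a new $1$), whereas inside a block of ones (all of length $1$ here) there is no branching. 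The goal is then to derive a recurrence $\#T_j = (\text{something involving } n_j) \cdot \#T_{j+1} + \cdots$ and read off the closed product form by induction on $k$.

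\textbf{Key steps.} First I would set up the analogue of Proposition~\ref{pro:forme_arbre} for $L_F$: describe precisely, for each maximal block $0^{n_j}$ appearing in $u$, how many nodes of the trie along the ``spine'' corresponding to that block spawn a copy of the subtree $T$ associated with the next occurrence of a $1$, and verify that the subtree hanging off the node $x1$ (for $x$ a prefix ending inside the $j$th block of zeroes) is isomorphic to the trie of the suffix $10^{n_{j-1}}\cdots 10^{n_1}$ of $u$. This is exactly the factoriality-of-subwords argument already invoked in Section~\ref{sec:3}. Second, with $P_j := \#\{v \in L_F : \binom{10^{n_j}1\cdots 10^{n_1}}{v}>0\}$ denoting the number of $L_F$-subwords of the length-$(j$ blocks$)$ suffix, I would establish the recurrence
\[
P_j = (n_j+1)\,(P_{j-1}+1) + P_{j-1} - (\text{correction}),
\]
or more cleanly $P_j = (n_j+1)\,P_{j-1} + n_j + 1$ — the precise coefficient is what the tree bookkeeping pins down — together with the base case $P_1 = n_1 + 2$ (the trie of $10^{n_1}$ in $L_F$ is a linear path with $n_1+2$ nodes: $\varepsilon, 1, 10, 100, \dots, 10^{n_1}$). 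Third, I would solve this recurrence by induction on $k$: assuming $P_{k-1} = (n_1+2)\prod_{j=2}^{k-1}(n_j+1)$, plug into the recurrence and check that the stated product form for $P_k = (n_1+2)\prod_{j=2}^{k}(n_j+1)$ drops out. Finally, $\#\{v\in L_F : \binom{u}{v}>0\} = P_k$ since $u$ is exactly the full word, and this is the claim.

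\textbf{Main obstacle.} The delicate part is getting the coefficient in the recurrence exactly right, i.e.\ correctly counting how the $n_j+1$ nodes of the $j$th zero-block spine interact with attached copies of the next subtree, and in particular making sure the additive constant ($+n_j+1$, or however it materializes) is handled without double-counting the spine nodes themselves versus the roots of the attached subtrees — this is precisely the kind of off-by-one that the clean statement $(n_j+1)$-factor vs.\ $(n_1+2)$-base-case is hiding. I expect this to require drawing (or carefully describing) the trie for a small example such as $u = 10010$ or $u = 100100$ and checking the count against a direct enumeration, exactly as is done in the Examples of Section~\ref{sec:3}. Once the recurrence $P_j = (n_j+1)P_{j-1} + n_j+1 = (n_j+1)(P_{j-1}+1)$ is correctly identified, the induction is immediate: $P_k = (n_k+1)(P_{k-1}+1)$ and one checks $P_{k-1}+1$ is \emph{not} the product form, so actually the cleanest route is to prove directly by induction that $P_k$ equals the stated product, verifying the inductive step $(n_k+1)(P_{k-1}+1) = (n_1+2)\prod_{j=2}^k(n_j+1)$ using $P_{k-1}+1 = $ [the product up to $k-1$] — here the $+1$ must be absorbed, which means the recurrence coefficient structure must actually be slightly different from a naive $(n_k+1)(P_{k-1}+1)$; pinning down this exact form from the tree is the crux.
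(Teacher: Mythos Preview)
Your approach is exactly the paper's: build the trie $\mathcal{T}_{L_F}(u)$, describe its structure via an analogue of Proposition~\ref{pro:forme_arbre} (this is Proposition~\ref{pro:forme_arbre2}), and induct on $k$. The paper's own proof is three lines and says precisely this. So strategically you are on target.

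Where you go astray is the form of the recurrence. You conjecture $P_j=(n_j+1)(P_{j-1}+1)$ and then correctly observe that this does not reproduce the product. The actual recurrence is the cleaner
\[
P_j=(n_j+1)\,P_{j-1},\qquad P_1=n_1+2,
\]
from which the closed form is immediate. To see why there is no ``$+1$'': write $u=10^{n_k}w$ with $w=10^{n_{k-1}}\cdots 10^{n_1}$, and let $t$ be the number of nodes in the subtree of $\mathcal{T}_{L_F}(w)$ rooted at $1$, so $P_{k-1}=1+t$. In $\mathcal{T}_{L_F}(u)$, the root $\varepsilon$ has a single child $1$; the node $1$ has a single child $10$ (the factor $11$ is forbidden); and each of the $n_k$ spine nodes $10,10^2,\ldots,10^{n_k}$ has a child $10^i1$ whose subtree is isomorphic to the subtree of $\mathcal{T}_{L_F}(w)$ rooted at $1$ (this is the key isomorphism: if $1z\in L_F$ then $10^i1z$ is a subword of $10^{n_k}w$ iff $1z$ is a subword of $w$, because leading zeroes in $0^{n_k-i}w$ cannot absorb a word beginning with $1$). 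Finally the spine node $10^{n_k}$ also has the child $10^{n_k}0$, and the subtree there together with the remaining structure is accounted for by the same isomorphism one level down. Collecting terms gives
\[
P_k \;=\; 1 + 1 + n_k + n_k\,t + (t-1) \;=\; (n_k+1)(t+1) \;=\; (n_k+1)\,P_{k-1}.
\]
Your worry about an off-by-one was well placed, but the cancellation is exact; checking it on $u=1001$ (where $P_2=6=3\cdot 2$) or $u=1010$ (where $P_2=6=2\cdot 3$) confirms the multiplicative form directly.
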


To prove this result, we essentially follow the same ideas as for Corollary~\ref{cor:arbreTl} but we need to consider the tree of subwords restricted to $L_F$. 

For a word $w\in L_F$, the tree $\mathcal{T}_{L_F}(w)$ is given in Definition~\ref{def:trie}. The factorization \eqref{eq:factoriz} of words in $L_F$ has a very particular form (because there is no $11$). To refer to the same subtrees as in Definition~\ref{def:subtree}, we stick to the notation of \eqref{eq:factoriz} even though the blocks of ones are limited to a single digit
$$w=\underbrace{1}_{u_1} \underbrace{0^{n_2}}_{u_2} \underbrace{1}_{u_3} \underbrace{0^{n_4}}_{u_4} \cdots \underbrace{1}_{u_{2j-1}} \underbrace{0^{n_{2j}}}_{u_{2j}}$$
with $j\ge 1$, $n_2,\ldots, n_{2j-2}\ge 1$ and $n_{2j}\ge 0$.
Let $M=M_w$ be such that $w=u_1u_2\cdots u_M$ where $u_M$ is the last non-empty block of zeroes or the last one.

\begin{example}
Consider the word $w=101000100$. With the above notation, $M=6$. In Figure~\ref{fig:treeF1}, we have represented the trie of subwords $\mathcal{T}_{L_F}(w)$ in $L_F$ and the subtrees $T_0,\ldots,T_5$. The roots of these subtrees correspond to a prefix of $w$ ending with $1$ or $10$.
\begin{figure}[h!tb]
    \centering
    {\psfrag{T0}{$T_0$}\psfrag{T1}{$T_1$}\psfrag{T2}{$T_2$}\psfrag{T3}{$T_3$}\psfrag{T4}{$T_4$}\psfrag{T5}{$T_5$}
\psfrag{0}{$0$}\psfrag{1}{$1$}
\includegraphics{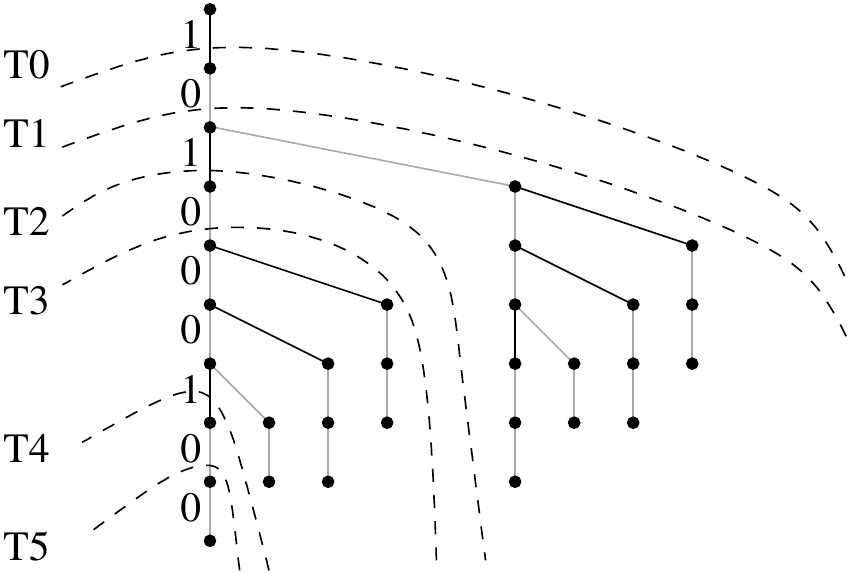}}
    \caption{The trie $\mathcal{T}_{L_F}(101000100)$.}
    \label{fig:treeF1}
\end{figure}
Figure~\ref{fig:treeF2} illustrates the next proposition. We see how the subtrees are connected to the ``initial'' linear subtree labeled by $w$.
\begin{figure}[h!tb]
    \centering
    {\psfrag{T0}{$T_0$}\psfrag{T1}{$T_1$}\psfrag{T2}{$T_2$}\psfrag{T3}{$T_3$}\psfrag{T4}{$T_4$}\psfrag{T5}{$T_5$}
\psfrag{0}{$0$}\psfrag{1}{$1$}
\includegraphics{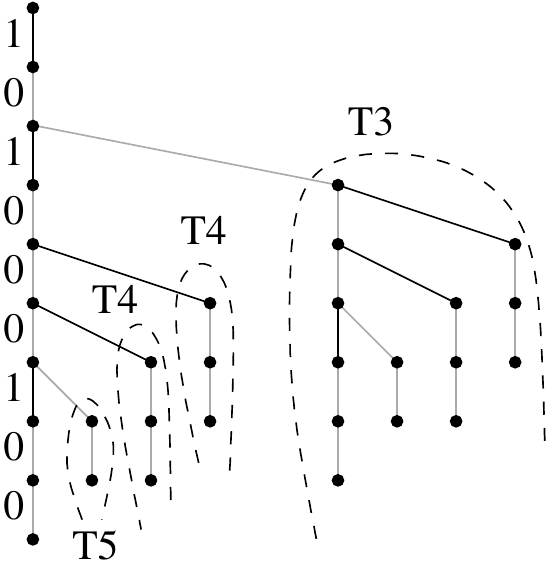}}
    \caption{The trie $\mathcal{T}_{L_F}(101000100)$.}
    \label{fig:treeF2}
\end{figure}
\end{example}

Since we are considering the language $L_F$, the analogue of Proposition~\ref{pro:forme_arbre} becomes:
\begin{prop}\label{pro:forme_arbre2}
Let $w$ be a finite word in $L_F$. With the above notation about $M$ and the subtrees $T_\ell$,
the tree $\mathcal{T}_{L_F}(w)$ has the following properties.
\begin{itemize}
\item
Assume that $2\le 2k<M$.
For every $j\in\{1,\ldots,n_{2k}-1\}$, the node of label $x=u_1 \cdots u_{2k-1} 0^j$ has two children $x0$ and $x1$.
The node $x1$ is the root of a tree isomorphic to $T_{2k}$. 
Moreover, $x=u_1 \cdots u_{2k-1}$ has a single child $x0$.

\item
Assume that $3\le 2k+1<M$.
The node of label $x=u_1 \cdots u_{2k}$ has two children $x0$ and $x1$.
The node $x0$ is the root of a tree isomorphic to $T_{2k+1}$. 

\item
The node of label $x=\varepsilon$ has only one child $x1$. 
\end{itemize}
\end{prop}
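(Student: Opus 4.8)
The plan is to mirror the proof of Proposition~\ref{pro:forme_arbre}, using the fact that $\mathcal{T}_{L_F}(w)$ records exactly the subwords of $w$ that avoid the factor $11$. Fix the factorization $w = u_1 u_2 \cdots u_M$ with $u_{2i-1} = 1$ and $u_{2i} = 0^{n_{2i}}$ as in the excerpt, and let $x$ be a node of $\mathcal{T}_{L_F}(w)$; thus $x$ is a subword of $w$ lying in $L_F$, and we must identify its children, i.e.\ the letters $a \in \{0,1\}$ for which $xa$ is again a subword of $w$ avoiding $11$. I would proceed by first pinning down, for each relevant form of $x$, the ``state'' of a leftmost embedding of $x$ into $w$: the smallest prefix $u_1 \cdots u_t$ (or a proper prefix of a block) of $w$ that still contains $x$. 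Since subwords are read left to right and we always take the greedy (leftmost) embedding, this state is well defined, and $xa$ is a subword of $w$ iff the letter $a$ occurs somewhere in the remaining suffix of $w$ after that leftmost embedding; the constraint $xa \in L_F$ only rules out appending $1$ when $x$ already ends in $1$.

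The key computations are then the following three cases, matching the three bullets. \emph{Root case:} $x = \varepsilon$ embeds trivially at the empty prefix, and the suffix is all of $w$, which starts with $1$ and contains only $0$'s and $1$'s; but $L_F$ forbids a leading $0$, so the only child of $\varepsilon$ in $\mathcal{T}_{L_F}(w)$ is $1$. \emph{Even case $x = u_1 \cdots u_{2k-1} 0^j$ with $2 \le 2k < M$ and $1 \le j \le n_{2k}-1$:} its leftmost embedding ends inside the block $u_{2k} = 0^{n_{2k}}$ after the $j$th zero, so the remaining suffix is $0^{n_{2k}-j} u_{2k+1} \cdots u_M$, which contains both a $0$ (since $j < n_{2k}$) and a $1$ (since $2k < M$, so $u_{2k+1} = 1$ exists); as $x$ ends in $0$, both $x0$ and $x1$ are legal, and $x1$'s leftmost embedding finishes reading the block $u_{2k}$ entirely and picks up the next $1$, so the subtree rooted at $x1$ sees exactly the suffix $u_{2k+1} u_{2k+2} \cdots u_M$ starting right after a $1$ — the same data that defines $T_{2k}$ — giving the claimed isomorphism. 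The special sub-case $j=0$, i.e.\ $x = u_1 \cdots u_{2k-1}$ ending in $1$, must be treated separately: its suffix is $u_{2k} \cdots u_M = 0^{n_{2k}} \cdots$, which contains a $0$ but appending $1$ is forbidden by $L_F$, so $x$ has the single child $x0$. \emph{Odd case $x = u_1 \cdots u_{2k}$ with $3 \le 2k+1 < M$:} this $x$ ends in $0$ (the block $u_{2k}$), its leftmost embedding consumes the prefix through $u_{2k}$, the remaining suffix is $u_{2k+1} \cdots u_M = 1 \cdots$ which contains both letters, so $x$ has children $x0$ and $x1$, and $x0$'s leftmost embedding is the same prefix followed by the next $0$, so the subtree at $x0$ is governed by the suffix $u_{2k+1} \cdots u_M$ together with a leading $10$ — exactly the defining data of $T_{2k+1}$.

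The main obstacle, and the step I would write most carefully, is the bookkeeping of leftmost embeddings: I need to argue that when $x$ is one of the listed prefix-like words, its \emph{only} leftmost embedding into $w$ is the obvious one (the identity on the prefix of $w$), and hence that the set of subwords extending $x$ is genuinely controlled by the corresponding suffix of $w$. This is where avoiding the factor $11$ is used twice over: once to guarantee that the blocks of ones are singletons (so the combinatorics of which prefixes appear as nodes is exactly the list in the three bullets), and once to see that appending a $1$ to a node ending in $1$ is the only way $\mathcal{T}(w)$ and $\mathcal{T}_{L_F}(w)$ differ, which accounts for the ``single child'' phenomena at $\varepsilon$ and at $u_1 \cdots u_{2k-1}$. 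Once the leftmost-embedding claim is established, the isomorphisms $T_{2k} \cong$ (subtree at $x1$) and $T_{2k+1} \cong$ (subtree at $x0$) follow by the same node-relabelling argument used implicitly throughout Section~\ref{sec:3}: a subword of the suffix of $w$ corresponds bijectively to its concatenation with the fixed prefix, and this bijection respects the child relation and the membership condition in $L_F$.
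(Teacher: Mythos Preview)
Your argument is correct and is precisely the kind of reasoning the paper leaves implicit: the paper does not give a proof of this proposition at all, presenting it as the immediate $L_F$-analogue of Proposition~\ref{pro:forme_arbre} (which is itself stated without proof, with the remark ``since we are dealing with subwords of $w$, we immediately get the following''). Your leftmost-embedding bookkeeping---showing that for each prefix-shaped node $x$ the greedy embedding is the identity on the prefix, so that the subtree at $x$ is controlled by the remaining suffix of $w$, and that the $L_F$ constraint only prunes the child $x1$ when $x$ ends in~$1$---is exactly the verification the paper omits, and it goes through as you describe.
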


\begin{proof}[Proof of Proposition~\ref{prop:formules-Fib}.]
    Proceed by induction and consider two cases depending on the last letter of $u$. Start with a linear tree and add, with a bottom-up approach, all the possible subtrees given by Proposition~\ref{prop:formules-Fib}.
\end{proof}

\begin{theorem}\label{the:freg}
The sequence $(S_F(n))_{n\ge 0}$ satisfies, for all $n\ge 0$, 
\begin{align*}
S_F(i_{00}(n)) &= 2 S_F(i_{0}(n)) - S_F(i_{\varepsilon}(n)) \\
S_F(i_{01}(n)) &= 2 S_F(i_{\varepsilon}(n)) \\
S_F(i_{010}(n)) &= 3S_F(i_\varepsilon (n)). 
\end{align*}
In particular, $(S_F(n))_{n\ge 0}$ is $F$-regular.
\end{theorem}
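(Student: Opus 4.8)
The plan is to establish the three identities directly from the closed-form count in Proposition~\ref{prop:formules-Fib}, using the structural fact recorded in the second part of Lemma~\ref{lem:pq}: since $L_F$ avoids only the factor $11$, appending a legal suffix to a normal representation commutes with the enumeration $i_q$. Concretely, write $\rep_F(i_\varepsilon(n)) = \rep_F(n)$ and, for $n$ large enough, factor it as $10^{n_k}\cdots 10^{n_2}10^{n_1}$ with $n_1\ge 0$, $n_2,\dots,n_k>0$ (the finitely many small $n$ where this fails can be checked by hand, as in the earlier inductive proofs). Lemma~\ref{lem:pq} then tells us exactly what $\rep_F(i_0(n))$, $\rep_F(i_{00}(n))$, $\rep_F(i_{01}(n))$, and $\rep_F(i_{010}(n))$ look like: they are obtained from $\rep_F(n)$ by appending $0$, $00$, $01$, $010$ respectively (one must first pass through the correct ``base'' suffix, i.e. $\rep_F(i_0(\mathbb{N}))0 = \rep_F(i_{00}(\mathbb{N}))$ etc., exactly as in Table~\ref{tab:illu}). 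The point is that appending these suffixes only modifies the \emph{trailing} block structure $(n_2,n_1)$ of the factorization in a completely explicit way.

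The key computation is then purely arithmetic. If $\rep_F(n)$ ends in $0^{n_1}$, then $\rep_F(i_0(n))$ ends in $0^{n_1+1}$, so Proposition~\ref{prop:formules-Fib} gives $S_F(i_0(n)) = (n_1+3)\prod_{j\ge 2}(n_j+1)$ against $S_F(n) = (n_1+2)\prod_{j\ge 2}(n_j+1)$; similarly $S_F(i_{00}(n)) = (n_1+4)\prod_{j\ge 2}(n_j+1)$. Hence $2S_F(i_0(n)) - S_F(i_\varepsilon(n)) = (2(n_1+3) - (n_1+2))\prod = (n_1+4)\prod = S_F(i_{00}(n))$, which is the first identity. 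For the second, appending $01$ turns the trailing $0^{n_1}$ into a new block $0^{n_1}10^0$, i.e. it introduces a new factor with exponent $n_1$ (now counted via the $(\cdot+1)$ rule) and a final block $0^0$; the count becomes $(0+2)(n_1+1)\prod_{j\ge2}(n_j+1) = 2(n_1+1)\prod_{j\ge2}(n_j+1)$. But $(n_1+1)\prod_{j\ge2}(n_j+1)$ is not quite $S_F(n) = (n_1+2)\prod_{j\ge2}(n_j+1)$ — so I need to be careful about which trailing exponent plays the role of the privileged ``$n_1+2$'' factor, and in particular handle the $n_1 = 0$ case (when $\rep_F(n)$ already ends in $1$, appending $01$ or $0$ behaves differently) as a separate sub-case or by re-indexing. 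The correct bookkeeping should yield $S_F(i_{01}(n)) = 2S_F(n)$ after matching the formulas; likewise, appending $010$ should give the factor $(1+2) = 3$ in place of $(0+2)=2$, producing $S_F(i_{010}(n)) = 3S_F(n)$.

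Once the three relations are proved, $F$-regularity follows formally: by Lemma~\ref{lem:pq}, every element $S_F \circ i_q$ of the $F$-kernel with $|q|\ge 1$ can be rewritten, peeling off digits of $q$ from the left, in terms of the three produced relations together with the obvious $S_F(i_{q}(i_{q'}(n))) = S_F(i_{qq'}(n))$ composition rule; iterating, every kernel element becomes a $\mathbb{Z}$-linear combination of $S_F \circ i_\varepsilon = (S_F(n))_{n\ge0}$ and $S_F \circ i_0 = (S_F(i_0(n)))_{n\ge 0}$. (Note $q \in 1\{0,01\}^*$ type constraints: because $L_F$ forbids $11$, the only one-letter extensions to deal with are ``append $0$'' and ``append $1$'', and ``append $1$'' is only legal after a $0$, so the three identities above, covering suffixes $00$, $01$, $010$, together with the generator $i_0$, exhaust the reachable cases.) Thus $\langle \mathcal{K}_F(S_F)\rangle$ is generated by two sequences and is finitely generated as a $\mathbb{Z}$-module.

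The main obstacle I anticipate is not the regularity deduction but the case analysis in the arithmetic step: the closed form in Proposition~\ref{prop:formules-Fib} treats the last block $0^{n_1}$ asymmetrically (it contributes $n_1+2$, not $n_1+1$), so appending digits that create or merge trailing blocks requires carefully tracking whether $\rep_F(n)$ ends in $1$ or in $0$, and whether $n_1 = 0$. I expect to split into two cases according to the last letter of $\rep_F(n)$ — exactly the split used in the proof sketch of Proposition~\ref{prop:formules-Fib} and in Proposition~\ref{pro:recF} — and to verify the handful of small-$n$ base cases directly against Table~\ref{tab:coeffFib} and Definition~\ref{def:S_F}.
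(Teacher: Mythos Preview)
Your approach is exactly the paper's: apply the closed form of Proposition~\ref{prop:formules-Fib} to $u$, $u0$, $u00$, $u01$, $u010$, use Lemma~\ref{lem:pq} to translate into the $i_q$ indexing, and conclude $F$-regularity with generators $S_F\circ i_\varepsilon$ and $S_F\circ i_0$.

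The only slip is in the second identity, and fixing it dissolves the obstacle you anticipate. Appending $01$ to $u=\cdots 10^{n_1}$ produces $\cdots 10^{n_1+1}10^{0}$: the penultimate block has exponent $n_1+1$, not $n_1$. Proposition~\ref{prop:formules-Fib} then gives
\[
\#\{v\in L_F:\tbinom{u01}{v}>0\}=(0+2)\,(n_1+2)\prod_{j\ge 2}(n_j+1)=2\,S_F(\val_F(u)),
\]
and similarly $u010=\cdots 10^{n_1+1}10^{1}$ yields $(1+2)(n_1+2)\prod_{j\ge2}(n_j+1)=3\,S_F(\val_F(u))$. Since Proposition~\ref{prop:formules-Fib} already allows $n_1=0$, no case split on the last letter of $\rep_F(n)$ is needed; the paper's proof is accordingly uniform.
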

\begin{proof}
Let $q\in\{0,1\}^*$ such that $i_q(\mathbb{N})\neq \emptyset$. Recall that, for all $n\ge 0$, 
$$
S_F(i_{q}(n)) =\#\left\{v\in L_F\mid \binom{\rep_F(i_q(n))}{v}>0\right\}.
$$

Let us prove the first relation. Let $u$ be a non-empty word in $L_F$. The idea is to use Proposition~\ref{prop:formules-Fib}. 
Let us write $u$ as $10^{n_k}10^{n_{k-1}}\cdots 10^{n_1}$ with  $n_2,\ldots,n_k>0$ and $n_1 \geq 0$.
By Proposition~\ref{prop:formules-Fib} we have 
\begin{eqnarray*} 
	\#\left\{v\in L_F\mid \binom{u}{v}>0\right\} 
	&=& 
	(n_1+2) \cdot \prod_{j=2}^{k} (n_{j} + 1)		\\
	\#\left\{v\in L_F\mid \binom{u0}{v}>0\right\}
	&=& 
	(n_1+3)\cdot \prod_{j=2}^{k} (n_{j} + 1)			\\
	\#\left\{v\in L_F\mid \binom{u00}{v}>0\right\}
	&=& 
	(n_1+4)\cdot \prod_{j=2}^{k} (n_{j} + 1).
\end{eqnarray*}
Hence, we get
\[
	2\cdot \#\left\{v\in L_F\mid \binom{u0}{v}>0\right\} 
	- \#\left\{v\in L_F\mid \binom{u}{v}>0\right\}
	=
	\#\left\{v\in L_F\mid \binom{u00}{v}>0\right\}.
\]
This leads to the expected relation because
$$
	\val_F(\rep_F(n)0)=i_0(n) \text{ and }\val_F(\rep_F(n)00)=i_{00}(n).
$$
These latter relations are reflected by the second part of Lemma~\ref{lem:pq}.

The last two relations are obtained using the same ideas. One has to simply use Proposition~\ref{prop:formules-Fib} with words of the form $u$, $u01$ and $u010$. We derive that 
$$\#\left\{v\in L_F\mid \binom{u01}{v}>0\right\}= 2 \cdot \#\left\{v\in L_F\mid \binom{u}{v}>0\right\}$$
and
$$\#\left\{v\in L_F\mid \binom{u010}{v}>0\right\}= 3 \cdot \#\left\{v\in L_F\mid \binom{u}{v}>0\right\}.$$
The $F$-regularity of the sequence follows from Lemma~\ref{lem:pq}: the $\mathbb{Z}$-module generated by the $F$-kernel of $S_F$ is generated by $(S_F(i_\varepsilon(n)))_{n\ge 0}=(S_F(n))_{n\ge 0}$ and $(S_F(i_0(n)))_{n\ge 0}$. As an example, 
$$
	(S_F(i_{1001}(n)))_{n\ge 0}=2(S_F(i_{10}(n)))_{n\ge 0}=2(S_F(i_{010}(n)))_{n\ge 0}=6(S_F(n))_{n\ge 0}.
$$
\end{proof}

Corollary~\ref{cor:mat_2} is replaced by the following result, which says that if a sequence is $F$-regular, then its $n$th term can be obtained by multiplying some matrices. The length of this product is proportional to $\log_\varphi(n)$ where $\varphi$ is the golden ratio. Here again, we get matrices of size $2$ thanks to Theorem~\ref{the:freg}.

\begin{corollary}\label{cor:matF}
Let 
\begin{equation}
 \label{eq:defV_F(n)}
V(0) = \left(
\begin{array}{c}
 S_F(i_{\varepsilon}(0)) \\
 S_F(i_{0}(0)) 
\end{array}
\right)
=\left(
\begin{array}{c}
 1 \\
 1 
\end{array}
\right).
\end{equation}
Consider the matrix-valued morphism $\mu : \{0,01\}^* \to \mathbb{Z}^2_2$ defined by
$$
\mu(0)=
\left(\begin{array}{cc}
 0 & 1 \\
 -1 & 2\\
\end{array}\right),\quad
\mu(01)=\left(
\begin{array}{ccccc}
 2 & 0\\
 3 & 0\\
\end{array}
\right).
$$
For all $n\ge 0$, let $0\rep_F(n)=u_k\cdots u_1$ where $u_i \in \{0, 01\}$ for all $i\in\{1,\ldots,k\}$. Then
$$
S_F(n) =
\begin{pmatrix}
    1&0\\
\end{pmatrix}
\, \mu(u_1)\cdots \mu(u_k)\, V(0).
$$
\end{corollary}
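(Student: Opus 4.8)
The plan is to derive Corollary~\ref{cor:matF} from Theorem~\ref{the:freg} in exactly the same way Corollary~\ref{cor:mat_2} was derived from Theorem~\ref{the:rel}, but keeping track of the fact that the ``digit blocks'' of the Fibonacci system are the words $0$ and $01$ rather than single letters. First I would record the two one-step recurrences. Taking $q=\varepsilon$ in the first two relations of Theorem~\ref{the:freg}, one has $S_F(i_{00}(n))=2S_F(i_0(n))-S_F(i_\varepsilon(n))$ and $S_F(i_{01}(n))=2S_F(i_\varepsilon(n))$; combining the first of these with $S_F(i_{010}(n))=3S_F(i_\varepsilon(n))$ gives $S_F(i_{010}(n))=\tfrac32 S_F(i_{01}(n))$, but more usefully, applying the first relation with $q$ replaced by $0$ (legitimate since $i_0(\mathbb N)\neq\emptyset$ and $i_{0q}(\mathbb N)=i_0(i_q(\mathbb N))$ by Lemma~\ref{lem:pq}) yields $S_F(i_{010}(n))=2S_F(i_{01}(n))-S_F(i_0(n))$, and together with $S_F(i_{010}(n))=3S_F(i_\varepsilon(n))$ and $S_F(i_{01}(n))=2S_F(i_\varepsilon(n))$ this is consistent. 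The upshot I actually need is: writing $V(n)=\bigl(\begin{smallmatrix}S_F(i_\varepsilon(n))\\ S_F(i_0(n))\end{smallmatrix}\bigr)$, one has
$$
V(i_0(n))=\begin{pmatrix}S_F(i_0(n))\\ S_F(i_{00}(n))\end{pmatrix}=\mu(0)\,V(n),\qquad
V(i_{01}(n))=\begin{pmatrix}S_F(i_{01}(n))\\ S_F(i_{010}(n))\end{pmatrix}=\mu(01)\,V(n),
$$
where the first equality of the second block uses $i_0(i_{01}(\cdot))=i_{010}(\cdot)$ from Lemma~\ref{lem:pq}. The first of these is immediate from $S_F(i_{00}(n))=2S_F(i_0(n))-S_F(i_\varepsilon(n))$; the second follows from $S_F(i_{01}(n))=2S_F(i_\varepsilon(n))$ and $S_F(i_{010}(n))=3S_F(i_\varepsilon(n))$.

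Next I would set up the bookkeeping lemma playing the role of \eqref{eq:mu}. The key structural input is the second part of Lemma~\ref{lem:pq}: if $pq\in L_F$ then $\rep_F(i_p(\mathbb N))q=\rep_F(i_{pq}(\mathbb N))$, so appending an admissible digit-block $q\in\{0,01\}$ to the $n$th word ending in $p$ produces the $n$th word ending in $pq$. Using this, an easy induction on the number of blocks shows that for any admissible word $u_1u_2\cdots u_t$ with each $u_i\in\{0,01\}$ (read with $u_t$ the most significant block), if $w=u_tu_{t-1}\cdots u_1$ is such that $w\cdot(\text{suffix})\in L_F$, then
$$
V(i_{u_1u_2\cdots u_t}(n))=\mu(u_1)\,\mu(u_2)\cdots\mu(u_t)\,V(n)\qquad\text{for all }n\ge 0,
$$
the base case being the two one-step relations above and the inductive step being $i_{u_1\cdots u_{t}}(\cdot)=i_{u_1}(i_{u_2\cdots u_t}(\cdot))$, which is exactly the first part of Lemma~\ref{lem:pq}. (I need to be a little careful that at each stage the relevant $i_q(\mathbb N)$ is non-empty; this is automatic because $0^*L_F$ contains words ending in any admissible suffix.)

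Finally I would specialize to $q=\rep_F(n)$ itself. Given $n\ge 0$, write $0\rep_F(n)=u_k\cdots u_1$ as a concatenation of blocks $u_i\in\{0,01\}$ — this is possible and unique precisely because $\rep_F(n)$ avoids $11$, so prepending a $0$ makes it a word of the regular language $\{0,01\}^*$ read from the right — and observe that $i_{u_1\cdots u_k}(0)=\val_F(u_k\cdots u_1)=n$ since $u_k\cdots u_1$ is the unique element of $0^*\rep_F(\mathbb N)$ ending in that block-string whose value is smallest, namely itself with no further leading blocks. Applying the displayed identity with $n$ replaced by $0$ gives $V(n)=\mu(u_1)\cdots\mu(u_k)\,V(0)$, and then reading off the first coordinate via $\bigl(\begin{smallmatrix}1&0\end{smallmatrix}\bigr)$ together with the value $V(0)=\bigl(\begin{smallmatrix}1\\1\end{smallmatrix}\bigr)$ from \eqref{eq:defV_F(n)} (namely $S_F(i_\varepsilon(0))=S_F(0)=1$ and $S_F(i_0(0))=S_F(0)=1$ since $i_0(0)=0$) yields the claimed formula. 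I expect the main obstacle to be the indexing sanity-checks rather than anything deep: making sure the block decomposition of $0\rep_F(n)$ lines up with the morphism $\mu$ in the correct (reversed) order, that $i_{u_1\cdots u_k}(0)$ really equals $n$, and that appending the block $01$ and then the block $0$ genuinely corresponds to the suffixes $01$ and $010$ used in Theorem~\ref{the:freg} — all of which rest on the ``length-2 forbidden factor'' phenomenon explained in Remark~\ref{rem:tribo foire}.
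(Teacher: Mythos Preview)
Your approach is essentially correct and close to the paper's: both argue by induction on the block decomposition $0\rep_F(n)=u_k\cdots u_1$ using Theorem~\ref{the:freg}. Your version is slightly more structural in that you carry the whole vector $V(n)=(S_F(n),S_F(i_0(n)))^T$ and establish the one-step relations $V(i_0(n))=\mu(0)V(n)$ and $V(i_{01}(n))=\mu(01)V(n)$ first, whereas the paper peels off $u_1$ directly from $S_F(n)$ and checks the needed matrix identities (e.g.\ $2\mu(0)-I=\mu(0)^2$) at the end; the content is the same.

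Two points to fix. First, your subscripts are reversed: since $0\rep_F(n)=u_k\cdots u_1$ has $u_1$ as its rightmost (least significant) block, the suffix in question is $u_t\cdots u_1$, not $u_1\cdots u_t$, and the correct composition is $i_{p q}=i_q\circ i_p$ (append $q$ last), i.e.\ $i_{u_t\cdots u_1}=i_{u_1}\circ i_{u_t\cdots u_2}$. Your final formula and your one-step relations are unaffected, but the intermediate displayed identity $i_{u_1\cdots u_t}=i_{u_1}\circ i_{u_2\cdots u_t}$ is literally false as written. Second, this composition of the \emph{index functions} $n\mapsto i_q(n)$ is not ``exactly the first part of Lemma~\ref{lem:pq}'' (which concerns the image sets only); it is a consequence of the \emph{second} part, via the observation that for $q\in\{0,01\}$ the map $i_q$ coincides with $n\mapsto\val_F(\rep_F(n)q)$. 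Once these two items are corrected the argument goes through.
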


\begin{proof}
We proceed by induction on $k$ in the factorization $0\rep_F(n)=u_k\cdots u_1$ with $u_i \in \{0, 01\}$ for all $i$.
One can observe that the result is true for $k \in \{1,2\}$.

Assume now that $0\rep_F(n)=u_k\cdots u_1$ with $k \geq 3$.
We only consider the case $u_2 = 0$, the other one is similar.
If $u_1 = 0$, then $S_F(n) = S_F(i_{00}(m))$ for some $m \ge 0$.
By Theorem~\ref{the:freg} and using the induction hypothesis, we get
\begin{eqnarray*}
	S_F(n) 	&=&	2 S_F(i_0(m)) - S_F(i_\varepsilon(m))		\\
			&=&	2 S_F(\val_F(u_k \cdots u_2)) - S_F(\val_F(u_k \cdots u_3))	\\
			&=&	\begin{pmatrix}
    					1&0\\
				\end{pmatrix}
				(2 \mu(0)-I)				
				\, \mu(u_3)\cdots \mu(u_k) \, V(0).
\end{eqnarray*}
The equality follows by observing that $(2 \mu(0)-I) = \mu(0)^2$.
If $u_1 = 01$, then $S_F(n) = S_F(i_{01}(m))$ for some $m \ge 0$.
By Theorem~\ref{the:freg} and using the induction hypothesis, we get
\begin{eqnarray*}
	S_F(n) 	&=& 2 S_F(i_\varepsilon(m))		\\
			&=& 2 S_F(\val_F(u_k \cdots u_2))	\\
			&=&	2 
				\begin{pmatrix}
    					1&0\\
				\end{pmatrix}
				\, \mu(u_2)\cdots \mu(u_k) \, V(0).
\end{eqnarray*}
The equality follows by observing that $2 \begin{pmatrix}  1&0 \end{pmatrix} = \begin{pmatrix}  1&0 \end{pmatrix} \mu(01)$.
\end{proof}

\begin{proof}[Proof of Proposition~\ref{pro:recF}.]
We make use of the previous corollary.
Assume that $n = F(\ell)+r$ with $\ell \geq 1$ and $0 \leq r < F(\ell-1)$.
We have $0 \rep_F(n) = u_k \cdots u_1$ for some $k \geq 2$, with $u_i \in \{0,01\}$ for all $i$.

If $F(\ell-2) \leq r < F(\ell-1)$, then $u_{k-1} = 01$.
By Corollary~\ref{cor:matF}, we get
\begin{eqnarray*}
	S_F(F(\ell)+r)
	&=&
	\begin{pmatrix}
   		1&0\\
	\end{pmatrix}
	\, \mu(u_1)\cdots \mu(u_{k-2}) \mu(01) \mu(01) \, V(0);	\\
	S_F(r)
	&=&
	\begin{pmatrix}
   		1&0\\
	\end{pmatrix}
	\, \mu(u_1)\cdots \mu(u_{k-2}) \mu(01) \, V(0)
\end{eqnarray*}
and the equality $S_F(F(\ell)+r) = 2 S_F(r)$ follows.

If $0 \le r < F(\ell-2)$, then $u_{k-1} = 0$.
Let $m < k-1$ be the greatest integer such that $u_m = 01$ (we set $m=0$ if $u_i = 0$ for all $i \leq k-1$).
By Corollary~\ref{cor:matF}, we have 
\begin{eqnarray*}
	S_F(F(\ell)+r)
	&=&
	\begin{pmatrix}
   		1&0\\
	\end{pmatrix}
	\, \mu(u_1)\cdots \mu(u_m) \mu(0)^{k-m-1} \mu(01) \, V(0);	\\
	S_F(F(\ell-1)+r)
	&=&
	\begin{pmatrix}
   		1&0\\
	\end{pmatrix}
	\, \mu(u_1)\cdots \mu(u_m) \mu(0)^{k-m-2} \mu(01) \, V(0)	;\\
	S_F(r)
	&=&
	\begin{pmatrix}
   		1&0\\
	\end{pmatrix}
	\, \mu(u_1)\cdots \mu(u_m) \, V(0).
\end{eqnarray*}
We get the equality $S_F(F(\ell)+r) = S_F(F(\ell-1)+r) +S_F(r)$ by showing by induction that, for all $n$, 
\[
	\mu(0)^n \mu(01) = 	
	\begin{pmatrix}
  		n+2&0 \\
   		n+3&0
	\end{pmatrix}.
\]
\end{proof}

\section{Concluding remarks}

After considering the language $L_2$ of base-$2$ expansions, then the language $L_F$ of Fibonacci expansions, one can naturally wonder whether similar properties can be observed for an arbitrary initial language (because Pascal-like triangles may be defined in this general setting). 
A first generalization of the Fibonacci case would be to consider the $m$-bonacci case where the corresponding language is made of the words over $\{0,1\}$ avoiding the factor $1^m$ (the Fibonacci case is $m=2$ and the Tribonacci case $m=3$ has been considered in Remark~\ref{rem:tribo foire}). 

\begin{example}
    For $m=3$, the sequence counting admissible subwords associated with the Tribonacci numeration system starts with $$1, 2, 3, 3, 4, 5, 5, 5, 7, 8, 6, 7, 7, 6, 9, 11, 9, 11, 12, 10, 9, 
11, 11, 9, 7, 11, 14, 12, 15, $$
$$17, 15, 14, 18, 19, 15, 14, 14, 11, 15, 
17, 15, 15, 17, 15, 8, 13, 17, 15, 19, 22,\ldots .$$
\end{example}

Due to Remark~\ref{rem:tribo foire} that extends easily to $m$-bonacci languages for $m>3$, Lemma~\ref{lem:pq} does not hold for the $m$-bonacci numeration system as soon as $m \geq 3$. Consequently, it is not clear whether the sequence $(S_T(n))_{n\ge 0}$ is $T$-regular or the analogue sequence for the $m$-bonacci case is $m$-bonacci-regular.
Nevertheless, the sequence $S_T$ seems to partially satisfy a relation similar to the first part of Proposition~\ref{pro:recF}. To build a table similar to the arrangement found in Table~\ref{tab:sf}, numerical observations lead to the following conjecture: if $n_i$ denotes the position of the last occurrence of $i$ in $S_T$ (assuming that $n_i$ is thus well defined, which is the case for the Fibonacci language, as shown by Proposition \ref{pro:ni} below), then $S_T(n_i+r)=S_T(n_{i-1}+r)+S_T(r)$ for $0\le r<n_i-n_{i-1}$, $i\ge 5$. 
\begin{table}[h!t]
$$\begin{array}{cccccccccccccccccccccc}
1\\ 
2 & 3\\ 
3\\
4 & 5 & 5\\
5 & 7 & 8 & 6 & 7 & 7\\
6 & 9 & 11 & 9 & 11 & 12 & 10 & 9 & 11 & 11 & 9\\ 
7 & 11 & 14 & 12 & 15 & 17 & 15 & 14 & 18 & 19 & 15 & 14 & 14 & 11 & 15 & 17 & 15 & 15 & 17 & 15\\
8 & 13 & 17 & 15 & 19 & 22 & \ldots\\
\end{array}$$
    \caption{Arrangement of the first few terms in $(S_T(n))_{n\ge 0}$.}
    \label{tab:st}
\end{table}
Moreover, the sequence $$(n_i)_{i\ge 1}= 0, 1, 3, 4, 7, 13, 24, 44, 81, 149, 274, 504,\ldots$$ satisfies the same linear relation as the Tribonacci sequence when $i\ge 4$. Nevertheless, it is not clear that one can determine a ``simple'' relation for $S_T(n_i+r)$ when $n_i-n_{i-1}\le r< n_{i+1}-n_i$ (corresponding to the second part of Proposition~\ref{pro:recF}) and thus derive a possible regularity of the sequence $S_T$. Again, one can also try with larger values of the parameter $m$ and imagine partial relations of the same form. 

In the Fibonacci case, if $n_i$ denotes the position of the last occurrence of $i$ in $(S_F(n))_{n \geq 0}$ for all $i\ge 1$, then $S_F(n_i + r) = S_F(n_{i-1}+r)+S_F(r)$ for $0\le r<n_i-n_{i-1}$ and $i\ge 5$. Indeed, we just need to combine Proposition~\ref{pro:recF} and Proposition~\ref{pro:ni}.

\begin{prop}\label{pro:ni}
For all $i\ge 1$, the number of occurrences of $i$ in $(S_F(n))_{n \geq 0}$ is finite. 
If $n_i$ denotes the position of the last occurrence of $i$ in $(S_F(n))_{n \geq 0}$, then $n_i = F(i-2)$ for all $i\ge 5$.
\end{prop}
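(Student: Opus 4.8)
The plan is to read everything off the closed form of Proposition~\ref{prop:formules-Fib}. For a non-empty word $u = 10^{n_k}10^{n_{k-1}}\cdots 10^{n_1}\in L_F$ (with $n_1\ge 0$ and $n_2,\dots,n_k>0$), I would set $a_1=n_1+2$ and $a_j=n_j+1$ for $2\le j\le k$, so that every $a_j\ge 2$; then Proposition~\ref{prop:formules-Fib} says $S_F(\val_F(u))=\prod_{j=1}^k a_j$, while a direct count of letters gives $|u|=k+\sum_{j=1}^k n_j=\bigl(\sum_{j=1}^k a_j\bigr)-1$. The one auxiliary fact I need is the elementary inequality: for integers $a_1,\dots,a_k\ge 2$ one has $\sum_{j=1}^k a_j\le \prod_{j=1}^k a_j$, with equality exactly when $k=1$, or $k=2$ and $a_1=a_2=2$. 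This is a short induction on $k$ (the case $k=2$ is $(a_1-1)(a_2-1)\ge 1$, and appending any further factor $\ge 2$ strictly increases the gap). Combining the two expressions, $|\rep_F(n)|\le S_F(n)-1$ for every $n\ge 1$.

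For the finiteness claim I would use that a length-$L$ word in $L_F$ has $\val_F$-value in the interval $[F(L-1),F(L))$, the largest integer with a length-$L$ normal $F$-representation being $F(L)-1$. Hence $S_F(n)=i$ forces $|\rep_F(n)|\le i-1$, and so $n<F(i-1)$, making $\{n:S_F(n)=i\}$ finite; the value $i=1$ is attained only at $n=0$ since $S_F(n)\ge 2$ for $n\ge 1$.

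For the exact value when $i\ge 5$: since $\rep_F(F(\ell))=10^{\ell}$, taking $k=1$ and $a_1=i$ in the formula gives $S_F(F(i-2))=i$, so $n_i\ge F(i-2)$. Conversely, assume $S_F(n)=i$ with $n\ge F(i-2)$, and put $L=|\rep_F(n)|$. We have $L\le i-1$ from the previous paragraph; and since $\val_F(\rep_F(n))\ge F(L-1)$, having $L\le i-2$ would give $n<F(L)\le F(i-2)$, a contradiction. So $L=i-1$, that is $\sum_{j=1}^k a_j=i=\prod_{j=1}^k a_j$. By the equality case of the elementary inequality together with $i\ne 4$, this forces $k=1$, hence $\rep_F(n)=10^{i-2}$ and $n=F(i-2)$. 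Therefore $n_i=F(i-2)$.

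The only point requiring a little care is tracking the equality case of $\sum\le\prod$ precisely enough to isolate the genuine exception $i=4$: there $S_F(4)=4=S_F(3)$, so $n_4=4\ne F(2)$ and the stated formula really does fail, whereas for $i\ge 5$ there is simply no room for a second factor in the product. Everything else is bookkeeping with the closed formula and the bounds $F(L-1)\le\val_F(w)<F(L)$ for $w\in L_F$ of length $L$.
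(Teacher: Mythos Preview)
Your proof is correct and takes a genuinely different route from the paper's. The paper argues via the recurrence of Proposition~\ref{pro:recF}: it first shows $S_F(F(\ell))=\ell+2$ and then proves by induction on $\ell$ that $S_F(F(\ell))<S_F(F(\ell)+r)$ for all $0<r<F(\ell-1)$, splitting into the two cases of the recurrence. You instead go straight to the closed form of Proposition~\ref{prop:formules-Fib}, recode it as $S_F(n)=\prod a_j$ with all $a_j\ge 2$ and $|\rep_F(n)|=\sum a_j-1$, and reduce everything to the elementary inequality $\sum a_j\le\prod a_j$ together with its equality cases. This yields the sharper uniform bound $|\rep_F(n)|\le S_F(n)-1$ in one stroke and pins down $n_i$ by a pure arithmetic argument, whereas the paper's inductive comparison $S_F(F(\ell))<S_F(F(\ell)+r)$ stays closer to the combinatorial structure and would generalize more readily to settings where a product formula is unavailable. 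Your handling of the exceptional case $i=4$ via the equality case $k=2$, $a_1=a_2=2$ is exactly right and nicely explains why the hypothesis $i\ge 5$ is needed.
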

\begin{proof}
First of all, Proposition~\ref{pro:recF} implies that 
\begin{equation}\label{eq:S_(n)>1}
S_F(n) > 1 = S_F(0) \quad \forall \, n\ge 1.
\end{equation}
Then for all $\ell \ge 0$, using Proposition~\ref{pro:recF} leads us to 
\begin{equation}\label{eq:S_Frec}
S_F(F(\ell)) = S_F(F(\ell-1)) + 1 
= S_F(F(\ell-2)) + 2
= \cdots 
= S_F(F(0)) + \ell 
= 2 + \ell. 
\end{equation}
Now we prove the following result : for all $\ell\ge 3$, $S_F(F(\ell))$ is less than $S_F(F(\ell)+r)$ for $0<r< F(\ell-1)$. We show this by induction on $\ell$. If $\ell\in\{3,4\}$, then Table~\ref{tab:sf} gives the result. We suppose the result holds true up to $\ell-1$ and we show it also holds true for $\ell$. For $0 < r < F(\ell-2)$, we have
\[
\begin{array}{rclr}
	S_F(F(\ell)+r)
	&=& S_F(F(\ell-1)+r) + S_F(r)  
	& \text{(by Proposition~\ref{pro:recF})} \\
	&>& S_F(F(\ell-1)) + S_F(0)  
	& \text{(by induction hypothesis and by \eqref{eq:S_(n)>1})} \\
	&>& S_F(F(\ell))  
	& \text{(by Proposition~\ref{pro:recF}).} 
\end{array}
\]
Now, for $F(\ell-2) \le r < F(\ell-1)$, we have $S_F(F(\ell)+r)=2S_F(r)$ by Proposition~\ref{pro:recF}. There exists $0 \le r' < F(\ell-3)$ such that $r=F(\ell-2)+r'$. We get
\[
\begin{array}{rclr}
	S_F(F(\ell)+r)
	&\ge& 2 S_F(F(\ell-2))  
	& \text{(by induction hypothesis)} \\
	&\ge & 2\ell  & \text{(by \eqref{eq:S_Frec})} \\
	&>&2+\ell& \text{(since } \ell\ge 5) \\
    &> &S_F(F(\ell))	& \text{(by \eqref{eq:S_Frec}).}
\end{array}
\]
This ends the proof of the intermediate result. Observe that, if $\ell\ge 3$, we have that $S_F(F(\ell)) < S_F(n)$ for all $n> F(\ell)$ since $S_F(F(\ell)) < S_F(F(\ell+m))$ with $m\ge 1$.

For all $i\ge 1$, we conclude that the number of occurrences of $i$ in $(S_F(n))_{n \geq 0}$ is finite. The sequence $(n_i)_{i\ge 1}$ is thus well-defined. We also get that $n_i = F(i-2)$ for all $i\ge 5$.
\end{proof}

\section*{Acknowledgement}
We would like to thank both reviewers for their insightful comments on the paper. One of the reviewer suggested the connection with the Stern--Brocot sequence and pointed out the reference of Lemma~\ref{lem:SchaSha}.

\end{document}